\documentclass[11pt]{article}

\usepackage[T1]{fontenc}
\usepackage[utf8]{inputenc}
\usepackage{lmodern}
\usepackage{microtype}
\usepackage{xspace}
\usepackage{amsmath,amssymb,amsthm,mathrsfs,mathtools}
\usepackage{enumitem}
\usepackage{booktabs,tabularx,array}
\usepackage{fullpage}
\usepackage{xcolor}
\usepackage{aliascnt}
\usepackage{hyperref}
\usepackage[nameinlink,noabbrev]{cleveref}

\allowdisplaybreaks

\definecolor{linkblue}{RGB}{28,77,121}
\hypersetup{
  colorlinks=true,
  linkcolor=linkblue,
  citecolor=linkblue,
  urlcolor=linkblue,
  pdftitle={Mobius Covariance and Coefficient Duality: From Bernoulli Series to Enumerative Applications},
  pdfauthor={Max A. Alekseyev}
}

\newtheorem{theorem}{Theorem}[section]

\newaliascnt{proposition}{theorem}
\newtheorem{proposition}[proposition]{Proposition}
\aliascntresetthe{proposition}

\newaliascnt{corollary}{theorem}
\newtheorem{corollary}[corollary]{Corollary}
\aliascntresetthe{corollary}

\newaliascnt{lemma}{theorem}

\aliascntresetthe{lemma}

\theoremstyle{definition}
\newaliascnt{definition}{theorem}
\newtheorem{definition}[definition]{Definition}
\aliascntresetthe{definition}

\newaliascnt{problem}{theorem}
\newtheorem{problem}[problem]{Problem}
\aliascntresetthe{problem}

\theoremstyle{remark}
\newaliascnt{remark}{theorem}
\newtheorem{remark}[remark]{Remark}
\aliascntresetthe{remark}

\crefname{theorem}{theorem}{theorems}
\Crefname{theorem}{Theorem}{Theorems}
\crefname{proposition}{proposition}{propositions}
\Crefname{proposition}{Proposition}{Propositions}
\crefname{corollary}{corollary}{corollaries}
\Crefname{corollary}{Corollary}{Corollaries}
\crefname{lemma}{lemma}{lemmas}
\Crefname{lemma}{Lemma}{Lemmas}
\crefname{definition}{definition}{definitions}
\Crefname{definition}{Definition}{Definitions}
\crefname{problem}{problem}{problems}
\Crefname{problem}{Problem}{Problems}
\crefname{remark}{remark}{remarks}
\Crefname{remark}{Remark}{Remarks}

\newcommand{\Bser}{\mathcal B}
\newcommand{\Eser}{\mathcal E}
\newcommand{\Pser}{\mathcal P}
\newcommand{\Mser}{\mathcal M}
\newcommand{\Cser}{\mathcal C}
\newcommand{\T}{\mathscr T}
\newcommand{\coeff}[1]{\mathop{[\![#1]\!]}\xspace}

\newcommand{\CC}{\mathbb C}
\newcommand{\litref}[2]{\cite[#2]{#1}}

\title{\textbf{M\"obius Covariance and Coefficient Duality:\\From Bernoulli Series to Enumerative Applications}}
\author{Max A. Alekseyev\footnote{The George Washington University, Washington, DC, USA. Email: \href{mailtpo:maxal@gwu.edu}{maxal@gwu.edu}}}
\date{}

\begin{document}
\maketitle

\begin{abstract}
A coefficient duality first encountered for formal Bernoulli series is shown to be equivalent to a general M\"obius covariance law for formal power series. We obtain a structural characterization, an eigenspace interpretation, and a weighted form of this duality. The Catalan convolution and Chebyshev identities from the motivating Bernoulli setting extend to arbitrary M\"obius-covariant families and yield a general Ramanujan-type summation formula encompassing consecutive half-integer powers. The framework also recovers classical Bernoulli and Euler recurrences and produces recurrence families for colored matchings and generalized central trinomial coefficients, with further realizations from reflection-symmetric Appell sequences and Gorenstein Hilbert series.
\end{abstract}

\section{Introduction}\label{sec:introduction}

The point of departure for this paper is the coefficient identity introduced in our work on Ramanujan-type formulas for sums of half-integer powers \cite{AlekseyevEtAl2025}. For a formal series $H(z)$, we write $\coeff{z^r}H(z)$ for the coefficient of $z^r$ in $H(z)$. In \cite{AlekseyevEtAl2025} we defined the \emph{formal Bernoulli series}
\[
\mathcal B_\alpha(z):=\sum_{k\ge0}\binom{\alpha}{k}B_kz^k
\qquad (\alpha\in\CC),
\]
where $B_k$ is the $k$-th Bernoulli number, and proved that, for every nonnegative integer $r$ and every $h\in\CC[[z]]$,
\begin{equation}
\coeff{z^r}h(z)\mathcal B_\alpha(z)
=
\coeff{z^r}
 h\!\left(\frac{z}{1+z}\right)
 (1+z)^{r-1-\alpha}\mathcal B_\alpha(-z).
\label{eq:intro-Bernoulli-duality}
\end{equation}
We call \eqref{eq:intro-Bernoulli-duality} a \emph{coefficient duality}: it equates two coefficient extractions related by a M\"obius substitution and its accompanying weight factor. This is Theorem~2 of \cite{AlekseyevEtAl2025}, and it is the statement generalized here.

The key observation is that the coefficient-extraction mechanism in \eqref{eq:intro-Bernoulli-duality} can be separated from its Bernoulli source. We therefore ask for which formal series $F(z)$ and weights $\alpha\in\CC$ the analogous equality
\begin{equation}
\coeff{z^r}h(z)F(z)
=
\coeff{z^r}
 h\!\left(\frac{z}{1+z}\right)
 (1+z)^{r-1-\alpha}F(-z)
\label{eq:intro-duality}
\end{equation}
holds for every nonnegative integer $r$ and every test series $h\in\CC[[z]]$. When it does, we say that $F$ satisfies the \emph{M\"obius coefficient duality of weight $\alpha$}. Table~\ref{tab:realizations} previews the principal sources of the relevant symmetry and the consequences developed from them.

\clearpage
\begin{table}[!t]
\caption{Principal realizations of the M\"obius covariance principle and their roles in the paper. Consequences marked by an asterisk are new to the best of our knowledge.}
\label{tab:realizations}
\centering
\footnotesize
\setlength{\tabcolsep}{4pt}
\renewcommand{\arraystretch}{1.18}
\begin{tabularx}{\textwidth}{>{\raggedright\arraybackslash}p{0.14\textwidth} >{\raggedright\arraybackslash}p{0.12\textwidth} >{\raggedright\arraybackslash}p{0.21\textwidth} >{\raggedright\arraybackslash}X >{\raggedright\arraybackslash\scriptsize}p{0.17\textwidth}}
\toprule
\textbf{Family} & \textbf{Series} & \textbf{Source of covariance} & \textbf{Consequences} & {\footnotesize\textbf{Representative results}}\\
\midrule
General formal series & $F(z)$ & M\"obius covariance; parity in $u=z/(2-z)$ & Parity classification; arbitrary-test-series coefficient duality\textsuperscript{*}; coordinate convolution arrays and generalized Ramanujan-type sums\textsuperscript{*}; weighted duality\textsuperscript{*} & \Cref{thm:structural-duality,thm:general-Ramanujan-sum}; \Cref{prop:coordinate-convolution,prop:weighted-duality}; \Cref{cor:Catalan-convolution}\\
Bernoulli series & $\mathcal B_\alpha(z)$ & Reflection of Bernoulli polynomials & Known Bernoulli recurrences of Chang--Ha, Kaneko, and Momiyama, and a Chang--Ha Genocchi recurrence & \Cref{cor:Chang-Ha-Bernoulli,cor:Kaneko,cor:Chang-Ha-Genocchi}; \Cref{thm:Momiyama}\\
Reflection-symmetric Appell families & $\mathcal P_{\alpha,x}(z)$ & Appell addition and reflection & Paired coefficient dualities\textsuperscript{*}; higher-order Bernoulli and Euler specializations & \Cref{thm:appell-duality}; \Cref{cor:higher-order-duality,cor:Chang-Ha-Euler}\\
Colored matchings & $\mathcal M_{\alpha,\lambda}(z)$ & Appell fixed point; Gaussian reflection & Polynomial recurrence families\textsuperscript{*}; fixed-edge refinements\textsuperscript{*} & \Cref{prop:colored-matching-duality}; \Cref{cor:colored-matching,cor:fixed-edge-refinement}\\
Weighted return walks & $\mathcal C_{\alpha;b,c}(z)$ & Laurent reflection; shifted arcsine symmetry & Central trinomial, binomial, and Delannoy recurrence families\textsuperscript{*}; fixed-step refinements\textsuperscript{*} & \Cref{prop:return-walk-duality}; \Cref{cor:trinomial-recurrence}\\
Gorenstein Hilbert series & $\varphi_A(z)$ & Stanley functional equation / palindromicity & Arbitrary-test-series coefficient dualities\textsuperscript{*}; Dehn--Sommerville specialization & \Cref{cor:Hilbert-duality,cor:Dehn-Sommerville}\\
\bottomrule
\end{tabularx}
\end{table}

Theorem~\ref{thm:structural-duality} proves that the coefficient property \eqref{eq:intro-duality} is equivalent to the covariance law $F(-z/(1-z))=(1-z)^{-\alpha}F(z)$, and hence to a parity normal form. Thus the generalization replaces the Bernoulli reflection input by abstract M\"obius covariance, while Lagrange inversion converts that covariance into the arbitrary-test-series identity \eqref{eq:intro-duality}.

In the motivating paper \cite{AlekseyevEtAl2025}, the Bernoulli duality was not an isolated auxiliary statement. It supplied the structural symmetry behind the parity vanishings in Theorem~5 and the passage between the two Catalan convolution formulas in Theorem~6; those formulas, evaluated on Chebyshev generating functions in Lemma~8, provide the key cancellation in the proof of the Ramanujan-type summation formula, Theorem~1. Once the foundational duality is generalized, the same proof architecture propagates to these dependent results. Section~\ref{sec:motivating-generalizations} gathers the resulting extensions: Corollary~\ref{cor:Catalan-convolution} generalizes the parity mechanism of Theorem~5 and the convolution duality of Theorem~6, Corollary~\ref{cor:Catalan-Chebyshev} generalizes Lemma~8, and Theorem~\ref{thm:general-Ramanujan-sum} generalizes Theorem~1. The additional vanishing at the first entry in Theorem~5 remains Bernoulli-specific. Thus the principal identities of \cite{AlekseyevEtAl2025} emerge as Bernoulli specializations of a common M\"obius-covariant structure.

While the Bernoulli series is the motivating arithmetic realization, it is the reflection formula for Bernoulli polynomials \cite[Section~24.4]{DLMF24} that supplies its M\"obius covariance. For the other families listed in Table~\ref{tab:realizations}, the covariance is supplied by different source-specific symmetries: Appell reflection, Gaussian symmetry, Laurent reflection, and Stanley-type reciprocity. In the Bernoulli case, the resulting duality also gives short proofs of recurrences of Chang--Ha, Kaneko, and Momiyama and connects, through Momiyama's argument, with Kummer's congruence.

Separating the source-specific covariance from its universal coefficient-extraction consequences also clarifies the boundary between prior work and the contribution of the present paper. Herbig, Herden, and Seaton studied the spaces $\mathcal F_s$ of formal power series satisfying
\[
\varphi\!\left(\frac{z}{z-1}\right)=(1-z)^s\varphi(z),
\]
and the graded algebra formed by these spaces \cite{HerbigHerdenSeaton2015,HerbigHerdenSeaton2021}. For integral weights they describe these covariance spaces and their invariant coordinate, and they apply the same covariance law to Gorenstein Hilbert series. Accordingly, we do not claim the M\"obius covariance law itself, or its integral-weight parity classification, as new; that part of our structural theorem is a reformulation of their theory. The additional step developed here is the arbitrary-test-series identity \eqref{eq:intro-duality}: using Lagrange inversion, we prove that M\"obius covariance is equivalent to this coefficient equality for every extraction degree $r$ and every $h\in\CC[[z]]$. The involutive eigenspace formulation, weighted duality, and recurrence-producing applications are then developed from this coefficient-extraction layer. In particular, the arbitrary-$h$ formulation and its applications below do not appear in the cited work of Herbig, Herden, and Seaton.

Unless stated otherwise, the formal theory is over $\CC$: the weight $\alpha$ may be any complex number, $r$ is a nonnegative integer, and all identities are interpreted coefficientwise in formal power series. Extra restrictions enter only when an arithmetic or combinatorial interpretation requires them. Thus the Bernoulli recurrences below specialize $\alpha$ to integral values determined by integer parameters $m,n$; coloring and step parameters are nonnegative integers when they literally count colored objects, although the resulting polynomial identities remain valid over $\CC$; and the Hilbert-series realization has an integral weight coming from the $a$-invariant and the Krull dimension. Probabilistic interpretations are stated only in the corresponding real parameter ranges.

There are several neighboring notions of duality and reciprocity. Kaneko's recurrence for Bernoulli numbers \cite[p.~192]{Kaneko1995} admits a short involutive proof due to Zagier, and Satoh transported that philosophy to $q$-Bernoulli numbers attached to formal groups \cite{Satoh2000}. Binomial-transform and Riordan-array dualities for Bernoulli and Euler sequences are studied, for example, in \cite{HeZheng2015}. Reflection-symmetric and reciprocal Appell polynomials have been developed systematically by Kellner and by Bayad--Komatsu \cite{Kellner2021Appell,Kellner2023,BayadKomatsu2017}. Matching-polynomial ``duality'' also has an established but different meaning, relating a graph and its complement \cite{Godsil1981Duality,Lass2004}. These theories provide useful context, but the coefficient-duality property \eqref{eq:intro-duality} is the organizing object here.

The paper is organized as follows. Section~\ref{sec:general-theory} develops the general theory: it proves the equivalence between M\"obius covariance, coefficient duality, and parity-normal form; derives the eigenspace, coordinate-convolution, and recurrence machinery; and establishes a weighted version of the duality. Section~\ref{sec:motivating-generalizations} applies this machinery to the Catalan coordinate from \cite{AlekseyevEtAl2025}, obtaining general versions of its convolution, parity, Chebyshev, and Ramanujan-type summation identities. Section~\ref{sec:bernoulli} treats the Bernoulli realization and its arithmetic applications. Section~\ref{sec:appell} develops reflection-symmetric Appell realizations, including higher-order Bernoulli and Euler families and an affine Euler variant. Section~\ref{sec:enumerative} gives two independent enumerative realizations, colored matchings and weighted return walks, together with their layerwise refinements and a common symmetric-moment interpretation. Section~\ref{sec:hilbert} treats Gorenstein Hilbert series and the Dehn--Sommerville specialization. Section~\ref{sec:further-directions} discusses formal groups and $q$-analogues, gives a cubic multisection as a proof of concept, and raises higher multisection questions, and Section~\ref{sec:conclusion} summarizes the resulting picture.

\section{M\"obius covariance and coefficient duality}\label{sec:general-theory}

Fix $\alpha\in\CC$ and let
\begin{equation}
\sigma(z):=-\frac{z}{1-z}.
\label{eq:sigma}
\end{equation}
Then
\begin{equation}
\sigma(\sigma(z))=z,
\qquad
1-\sigma(z)=\frac1{1-z}.
\label{eq:sigma-properties}
\end{equation}

We now separate the two properties that were intertwined in the motivating Bernoulli example.

\begin{definition}[M\"obius covariance]\label{def:covariance}
A formal power series $F(z)\in\CC[[z]]$ is \emph{M\"obius-covariant of weight $\alpha$} if
\begin{equation}
F\!\left(-\frac{z}{1-z}\right)=(1-z)^{-\alpha}F(z).
\label{eq:Fcov}
\end{equation}
\end{definition}

\begin{definition}[M\"obius coefficient duality]\label{def:coefficient-duality}
A formal power series $F(z)\in\CC[[z]]$ is said to satisfy the \emph{M\"obius coefficient duality of weight $\alpha$} if, for every nonnegative integer $r$ and every formal power series $h$,
\begin{equation}
\coeff{z^r}h(z)F(z)
=
\coeff{z^r}
 h\!\left(\frac{z}{1+z}\right)
 (1+z)^{r-1-\alpha}F(-z).
\label{eq:main-duality}
\end{equation}
\end{definition}

Definition~\ref{def:coefficient-duality} is deliberately a property, not an operation defined for arbitrary series: there is no reason for \eqref{eq:main-duality} to hold unless $F$ has the requisite symmetry. The first structural result says that this coefficient property is exactly equivalent to M\"obius covariance. It also identifies the parity normal form of every such series.

For integral weights, the equivalence between the covariance law and the parity-normal form is contained, in an equivalent formulation, in the work of Herbig, Herden, and Seaton \cite{HerbigHerdenSeaton2015,HerbigHerdenSeaton2021}. The equivalence with Definition~\ref{def:coefficient-duality} isolates the additional Lagrange-inversion structure used throughout this paper.

\begin{theorem}[Structural characterization]\label{thm:structural-duality}
Let $F(z)\in\CC[[z]]$. The following conditions are equivalent.
\begin{enumerate}[label=\textnormal{(\roman*)},leftmargin=2.4em]
\item $F$ is M\"obius-covariant of weight $\alpha$ in the sense of Definition~\ref{def:covariance}.
\item $F$ satisfies the M\"obius coefficient duality of weight $\alpha$ in the sense of Definition~\ref{def:coefficient-duality}.
\item Equation \eqref{eq:main-duality} holds for every nonnegative integer $r$ with $h=1$.
\item There exists a unique $\Phi_{F,\alpha}(t)\in\CC[[t]]$ such that
\begin{equation}
F(z)=(1-z)^{\alpha/2}
\Phi_{F,\alpha}\!\left(\left(\frac{z}{2-z}\right)^2\right).
\label{eq:Fclassification}
\end{equation}
\end{enumerate}
\end{theorem}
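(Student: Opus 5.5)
The plan is to prove the cycle (i)$\Rightarrow$(ii)$\Rightarrow$(iii)$\Rightarrow$(i), and then (i)$\Leftrightarrow$(iv) separately. Both (i)$\Rightarrow$(ii) and (iii)$\Rightarrow$(i) come from one change-of-variables identity in the formal residue calculus, which is the Lagrange-inversion input. Write $\coeff{z^r}G(z)=\operatorname{res}_z G(z)z^{-r-1}$ and substitute $z=w/(1+w)=\sigma(-w)$. This is a formal series in $w$ with zero constant term and leading coefficient $1$, so the formal residue is invariant under the substitution.

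Under this substitution the factors transform as follows:
\[
z^{-r-1}=w^{-r-1}(1+w)^{r+1},\qquad dz=(1+w)^{-2}\,dw,\qquad 1-z=(1+w)^{-1}.
\]
Hence, for any $G,h\in\CC[[z]]$,
\[
\coeff{z^r}h(z)G(z)=\coeff{w^r}h\!\left(\tfrac{w}{1+w}\right)(1+w)^{r-1}G\!\left(\sigma(-w)\right).
\]
Powers $(1\pm z)^{\beta}$ for complex $\beta$ are understood as binomial series. The identity $(1-z)^\beta=(1+w)^{-\beta}$ under this substitution holds formally, because both sides are the unique series with constant term $1$ whose logarithmic derivatives agree. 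I expect this formal bookkeeping to be the only delicate point: the residue invariance under composition with a series of order one, and the compatibility of complex powers with substitution.

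\textbf{(i)$\Rightarrow$(ii)$\Rightarrow$(iii).} Apply the identity with $G=F$. Covariance at the point $-w$ gives $F(\sigma(-w))=(1-(-w))^{-\alpha}F(-w)=(1+w)^{-\alpha}F(-w)$. Substituting this yields exactly \eqref{eq:main-duality}, so (i)$\Rightarrow$(ii). The step (ii)$\Rightarrow$(iii) is trivial, since it is the case $h=1$.

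\textbf{(iii)$\Rightarrow$(i).} Define $G(z):=(1-z)^{\alpha}F(\sigma(z))$. At $z=w/(1+w)$ we have $\sigma(z)=-w$ and $(1-z)^\alpha=(1+w)^{-\alpha}$, so $G(\sigma(-w))=(1+w)^{-\alpha}F(-w)$. The identity with $h=1$ then shows $\coeff{z^r}G=\coeff{z^r}(1+z)^{r-1-\alpha}F(-z)$ for every $r$. Hypothesis (iii) says the right side also equals $\coeff{z^r}F$. So $F=G$ coefficientwise, that is, $F(z)=(1-z)^\alpha F(\sigma(z))$. Using $1-\sigma(z)=(1-z)^{-1}$ from \eqref{eq:sigma-properties}, this rearranges to \eqref{eq:Fcov}.

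\textbf{(i)$\Leftrightarrow$(iv).} Set $\Psi(z):=(1-z)^{-\alpha/2}F(z)$. Using \eqref{eq:sigma-properties}, condition \eqref{eq:Fcov} becomes equivalent to the invariance $\Psi(\sigma(z))=\Psi(z)$. Next introduce the coordinate $u=z/(2-z)$. It is an invertible formal change of variables with inverse $z=2u/(1+u)$, and a direct computation gives $\sigma(z)/(2-\sigma(z))=-u$, so $\sigma$ acts as $u\mapsto -u$. Writing $\Psi$ as a power series in $u$, invariance under $\sigma$ is equivalent to $\Psi$ being even in $u$. An even series is exactly one of the form $\Phi(u^2)$, which gives \eqref{eq:Fclassification}. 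Uniqueness of $\Phi_{F,\alpha}$ follows because $z\mapsto u$ is invertible and $t\mapsto u^2$ determines the coefficients of $\Phi$ from the even part. Conversely, any $F$ of the form \eqref{eq:Fclassification} has $\Psi=\Phi(u^2)$ even in $u$, hence is covariant.
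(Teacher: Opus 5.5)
Your proof is correct and is essentially the paper's argument. Your residue change of variables $z=w/(1+w)$ is the same Lagrange-inversion identity the paper applies through the inverse substitution $g(z)=z/(1-z)$, used forwards for (i)$\Rightarrow$(ii) and backwards with $h=1$ for (iii)$\Rightarrow$(i), and your (i)$\Leftrightarrow$(iv) is the same parity argument in $u=z/(2-z)$ (the appeal to $1-\sigma(z)=(1-z)^{-1}$ at the end of (iii)$\Rightarrow$(i) is unnecessary, since multiplying by $(1-z)^{-\alpha}$ already gives \eqref{eq:Fcov}).
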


\begin{proof}
Assume \textnormal{(i)}. Let
\[
g(z):=\frac{z}{1-z},
\]
the compositional inverse of $z/(1+z)$. Lagrange inversion in the form
\begin{equation}
\coeff{z^r}H(z)(1+z)^r
=
\coeff{z^r}H(g(z))\frac{zg'(z)}{g(z)}
\label{eq:Lagrange-form}
\end{equation}
is standard; see, for example, \cite{Gessel2016}. Taking
\[
H(z):=h\!\left(\frac{z}{1+z}\right)(1+z)^{-1-\alpha}F(-z)
\]
and using $zg'(z)/g(z)=1/(1-z)$, the right-hand side of \eqref{eq:Lagrange-form} becomes
\[
\coeff{z^r}h(z)(1-z)^\alpha F\!\left(-\frac{z}{1-z}\right)
=\coeff{z^r}h(z)F(z),
\]
which proves \textnormal{(ii)}. The implication \textnormal{(ii)}$\Rightarrow$\textnormal{(iii)} is immediate.

Conversely, applying \eqref{eq:Lagrange-form} backwards with $h=1$ shows that the right-hand side of \eqref{eq:main-duality} equals
\[
\coeff{z^r}(1-z)^\alpha F\!\left(-\frac{z}{1-z}\right).
\]
Thus \textnormal{(iii)} for all $r$ implies equality of the two formal series
\[
F(z)=(1-z)^\alpha F\!\left(-\frac{z}{1-z}\right),
\]
which is \textnormal{(i)}.

Finally, put
\[
G(z):=(1-z)^{-\alpha/2}F(z),
\qquad
u:=\frac{z}{2-z}.
\]
Condition \textnormal{(i)} is equivalent to $G(\sigma(z))=G(z)$, while $u(\sigma(z))=-u(z)$. Hence $G$ is an even formal series in $u$, so $G(z)=\Phi_{F,\alpha}(u^2)$ for a unique $\Phi_{F,\alpha}\in\CC[[t]]$. This proves \textnormal{(i)}$\Leftrightarrow$\textnormal{(iv)}.
\end{proof}

The subscript in $\Phi_{F,\alpha}$ records that this series is attached to the fixed pair $(F,\alpha)$; for a parameterized family it generally varies with the parameter. For the Bernoulli family we abbreviate it to $\Phi_\alpha$.

\paragraph{Bernoulli normal form.}
For the formal Bernoulli series $\Bser_\alpha(z)$ from the Introduction, the parity factor in Theorem~\ref{thm:structural-duality} is explicit:
\begin{equation}
\Phi_\alpha(t):=(1-t)^{-\alpha/2}
\sum_{j\ge0}\binom{\alpha}{2j}(2-4^j)B_{2j}t^j,
\label{eq:Bernoulli-Phi}
\end{equation}
and hence
\begin{equation}
\Bser_\alpha(z)
=(1-z)^{\alpha/2}
\Phi_\alpha\!\left(\left(\frac{z}{2-z}\right)^2\right).
\label{eq:Bernoulli-normal-form}
\end{equation}
Indeed, put $u:=z/(2-z)$, so that $z=2u/(1+u)$. Then
\begin{align*}
(1-z)^{-\alpha/2}\Bser_\alpha(z)
&=(1-u^2)^{-\alpha/2}
\sum_{k\ge0}\binom{\alpha}{k}B_k(2u)^k(1+u)^{\alpha-k}.
\end{align*}
If $B_m(x):=\sum_{k=0}^m\binom{m}{k}B_kx^{m-k}$ denotes the Bernoulli polynomial, then the coefficient of $u^m$ in the last sum is
\begin{align*}
\sum_{k=0}^m
\binom{\alpha}{k}B_k2^k\binom{\alpha-k}{m-k}
&=\binom{\alpha}{m}\sum_{k=0}^m\binom{m}{k}2^kB_k\\
&=\binom{\alpha}{m}2^mB_m(1/2).
\end{align*}
Since $B_m(1/2)=(2^{1-m}-1)B_m$, all odd coefficients vanish, and the even coefficients give \eqref{eq:Bernoulli-Phi}. For example,
\[
\Phi_0(t)=1,
\qquad
\Phi_1(t)=(1-t)^{-1/2},
\qquad
\Phi_2(t)=\frac{1-t/3}{1-t}.
\]
Thus the specialization $\alpha=0$ determines only $\Phi_0$, not the series belonging to other weights.

For even integral weight there is a direct Faulhaber interpretation. Let $m\ge1$. Since
\[
\Bser_{2m}(z)=z^{2m}B_{2m}(1/z),
\]
putting $x:=1/z$ and $y:=x(x-1)$ in \eqref{eq:Bernoulli-normal-form} gives
\begin{equation}
B_{2m}(x)
=y^m\Phi_{2m}\!\left(\frac{1}{4y+1}\right).
\label{eq:Phi-Faulhaber-quadratic}
\end{equation}
Thus $\Phi_{2m}$ is a reciprocal M\"obius reparametrization of the familiar quadratic reduction of the reflection-symmetric Bernoulli polynomial $B_{2m}(x)$. In particular, if
\[
S_{2m-1}(n):=\sum_{\ell=1}^n\ell^{2m-1},
\qquad
X:=\frac{n(n+1)}2,
\]
then the Bernoulli formula for power sums and \eqref{eq:Phi-Faulhaber-quadratic} yield
\begin{equation}
S_{2m-1}(n)
=
\frac{(2X)^m\Phi_{2m}\!\left((8X+1)^{-1}\right)-B_{2m}}{2m},
\label{eq:Phi-Faulhaber}
\end{equation}
which is the classical Faulhaber polynomial in $X$. The same quadratic substitution for reflection-symmetric Appell polynomials, and its relation to Faulhaber-type polynomials, is studied systematically in \cite{Kellner2021Appell,Kellner2023}; here it appears as the reciprocal form of the M\"obius parity coordinate.

The theorem shows that the duality is not a special property of a distinguished coefficient sequence. It is the coefficient-extraction form of a M\"obius symmetry, and every series with that symmetry is obtained by choosing an arbitrary even series in the parity coordinate $u=z/(2-z)$ and restoring the weight factor $(1-z)^{\alpha/2}$.

\subsection{Eigenspaces and parity}

Fix a nonnegative integer $r$. The exponent $r-1-\alpha$ occurs repeatedly in the duality and is the shifted weight carried by the transformed test series; for brevity, write
\begin{equation}
q:=r-1-\alpha.
\label{eq:q}
\end{equation}
For a covariant series $F$, define
\begin{equation}
L_{r,F}(h):=\coeff{z^r}h(z)F(z)
\label{eq:L}
\end{equation}
and
\begin{equation}
(\T_qh)(z):=(1-z)^q h\!\left(-\frac{z}{1-z}\right).
\label{eq:T}
\end{equation}

\begin{proposition}\label{prop:eigenspace-duality}
One has
\begin{equation}
L_{r,F}(h)=(-1)^rL_{r,F}(\T_qh),
\label{eq:eigen-duality}
\end{equation}
and
\begin{equation}
\T_q^2=1.
\label{eq:T-involution}
\end{equation}
\end{proposition}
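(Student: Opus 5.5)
The plan is to derive \eqref{eq:eigen-duality} directly from the coefficient duality of Theorem~\ref{thm:structural-duality}(ii), which holds because $F$ is covariant, by rewriting its right-hand side as an extraction against $F(z)$ itself. The involution property \eqref{eq:T-involution} will come separately from \eqref{eq:sigma-properties}.

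First I would rewrite the right-hand side of \eqref{eq:main-duality}. Write $H(z):=h\!\left(\frac{z}{1+z}\right)(1+z)^{q}F(-z)$, so that the duality reads $L_{r,F}(h)=\coeff{z^r}H(z)$. Substituting $z\mapsto -z$ gives
\[
\coeff{z^r}H(z)=(-1)^r\coeff{z^r}H(-z),
\]
and
\[
H(-z)=h\!\left(\frac{-z}{1-z}\right)(1-z)^{q}F(z)=(\T_qh)(z)\,F(z).
\]
Hence $L_{r,F}(h)=(-1)^r\coeff{z^r}(\T_qh)(z)F(z)=(-1)^rL_{r,F}(\T_qh)$, which is \eqref{eq:eigen-duality}. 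The only care needed is that $-z/(1-z)$ has zero constant term, so the composition $h(\sigma(z))$ is a legitimate formal power series. The factor $(1-z)^q$ for complex $q$ is defined by the binomial series.

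For the involution, I would compute
\[
(\T_q^2h)(z)=(1-z)^q\,(1-\sigma(z))^q\,h(\sigma(\sigma(z))).
\]
By \eqref{eq:sigma-properties}, $h(\sigma(\sigma(z)))=h(z)$ and $(1-\sigma(z))^q=(1-z)^{-q}$, so $\T_q^2h=h$.

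The main (mild) obstacle is justifying $(1-\sigma(z))^q=(1-z)^{-q}$ for non-integer $q$ in the formal setting. Both sides are power series with constant term $1$, and binomial powers of such series satisfy $(AB)^q=A^qB^q$ and $(A^{-1})^q=A^{-q}$, for instance via $\exp(q\log A)$. Granting this, both identities are routine.
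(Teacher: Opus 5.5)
Your proposal is correct and follows the same route as the paper: substitute $z\mapsto -z$ on the right-hand side of \eqref{eq:main-duality} to obtain $(-1)^r L_{r,F}(\T_q h)$, and use \eqref{eq:sigma-properties} to get $\T_q^2=1$. You simply write out the details (including the formal justification of $(1-\sigma(z))^q=(1-z)^{-q}$) that the paper's two-line proof leaves implicit.
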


\begin{proof}
Replace $z$ by $-z$ on the right-hand side of \eqref{eq:main-duality}; then use \eqref{eq:sigma-properties} for the involution statement.
\end{proof}

\begin{corollary}\label{cor:eigen-vanish}
If $\T_qh=\varepsilon h$ with $\varepsilon\in\{1,-1\}$, then
\begin{equation}
L_{r,F}(h)=0
\qquad\text{whenever}\qquad
\varepsilon\ne(-1)^r.
\label{eq:eigen-vanish}
\end{equation}
\end{corollary}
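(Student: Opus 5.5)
The plan is to substitute the eigenvector hypothesis directly into the identity \eqref{eq:eigen-duality} of Proposition~\ref{prop:eigenspace-duality}, and then use linearity of $L_{r,F}$. Throughout, $F$ is a Möbius-covariant series of weight $\alpha$, as in the setting of that proposition, and $q=r-1-\alpha$ as in \eqref{eq:q}.

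First, I observe that $L_{r,F}$ is a linear functional on $\CC[[z]]$. This holds because multiplication by $F$ and extraction of the coefficient of $z^r$ are both linear. Hence $L_{r,F}(\varepsilon h)=\varepsilon L_{r,F}(h)$ for every scalar $\varepsilon$.

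Second, assume $\T_qh=\varepsilon h$. Then \eqref{eq:eigen-duality} gives
\[
L_{r,F}(h)=(-1)^rL_{r,F}(\T_qh)=(-1)^r\varepsilon\,L_{r,F}(h),
\]
which rearranges to
\[
\bigl(1-(-1)^r\varepsilon\bigr)L_{r,F}(h)=0 .
\]
Since $\varepsilon\in\{1,-1\}$, the condition $\varepsilon\ne(-1)^r$ is equivalent to $\varepsilon=-(-1)^r$. In that case $(-1)^r\varepsilon=-1$, so the scalar factor $1-(-1)^r\varepsilon$ equals $2$. Dividing by $2$, which is allowed over $\CC$, yields $L_{r,F}(h)=0$, as claimed in \eqref{eq:eigen-vanish}.

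There is no real obstacle here. The only point that needs care is that $q$ depends on $r$, so the eigenvector hypothesis must refer to the operator $\T_q$ with the same $r$ that appears in $L_{r,F}$. The statement is set up exactly this way, and Proposition~\ref{prop:eigenspace-duality} (hence Theorem~\ref{thm:structural-duality}) already supplies the substantive input.
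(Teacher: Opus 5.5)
Your proof is correct and is exactly the immediate argument the paper intends (it states the corollary without a separate proof, as a direct consequence of Proposition~\ref{prop:eigenspace-duality}). Substituting $\T_qh=\varepsilon h$ into \eqref{eq:eigen-duality} and using linearity gives $L_{r,F}(h)=(-1)^r\varepsilon\,L_{r,F}(h)=-L_{r,F}(h)$ when $\varepsilon\ne(-1)^r$, which forces $L_{r,F}(h)=0$.
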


The same coordinate that classifies $F$ diagonalizes $\T_q$. If
\begin{equation}
h(z)=(1-z)^{q/2}g\!\left(\frac{z}{2-z}\right),
\label{eq:hdiag}
\end{equation}
then
\begin{equation}
\T_qh(z)=(1-z)^{q/2}g\!\left(-\frac{z}{2-z}\right).
\label{eq:Tdiag}
\end{equation}
Thus the eigenspace decomposition is ordinary parity in $u=z/(2-z)$.

\begin{remark}
Herbig, Herden, and Seaton use the invariant
\[
\lambda_2(z):=\left(\frac{z}{z-2}\right)^2=u^2
\]
in their description of the invariant algebra \cite[Sec.~3]{HerbigHerdenSeaton2021}. Retaining the unsquared coordinate $u$ makes both eigenspaces visible and is convenient for coefficient cancellation.
\end{remark}

Taking $g(u)=u^j$ gives a universal hierarchy.

\begin{corollary}\label{cor:general-hierarchy}
Let $p:=(r-1-\alpha)/2$. If $r-j$ is odd, then
\begin{equation}
\coeff{z^r}(1-z)^p
\left(\frac{z}{2-z}\right)^jF(z)=0.
\label{eq:hierarchy}
\end{equation}
In particular, for odd $r$,
\begin{equation}
\coeff{z^r}(1-z)^{(r-1-\alpha)/2}F(z)=0.
\label{eq:master-vanishing}
\end{equation}
\end{corollary}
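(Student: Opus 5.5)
We apply Corollary~\ref{cor:eigen-vanish} to the test series obtained from the diagonalizing parametrization \eqref{eq:hdiag} with $g(u)=u^j$. Fix $r$ and put $q=r-1-\alpha$, so that $p=q/2$. Consider
\[
h_j(z):=(1-z)^{q/2}\left(\frac{z}{2-z}\right)^j .
\]
This is a formal power series in $z$: $(1-z)^{q/2}$ is a well-defined binomial series, and $z/(2-z)=\tfrac{z}{2}(1-z/2)^{-1}$ lies in $z\CC[[z]]$. By construction, $\coeff{z^r}h_j(z)F(z)$ is exactly the left-hand side of \eqref{eq:hierarchy}, that is, $L_{r,F}(h_j)$ in the notation of \eqref{eq:L}.

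The first step is to compute $\T_q h_j$ using \eqref{eq:Tdiag}, which is justified by $1-\sigma(z)=(1-z)^{-1}$ and $u(\sigma(z))=-u(z)$. This gives
\[
\T_q h_j=(1-z)^{q/2}\left(-\frac{z}{2-z}\right)^j=(-1)^j h_j,
\]
so $h_j$ is an eigenvector of $\T_q$ with eigenvalue $\varepsilon=(-1)^j$. If $r-j$ is odd, then $(-1)^j\neq(-1)^r$, and Corollary~\ref{cor:eigen-vanish} yields $L_{r,F}(h_j)=0$, which is \eqref{eq:hierarchy}. Equivalently, Proposition~\ref{prop:eigenspace-duality} gives $L=(-1)^{r+j}L=-L$, hence $L=0$. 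The special case \eqref{eq:master-vanishing} is $j=0$ with $r$ odd.

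The only step requiring care is verifying \eqref{eq:Tdiag} at the level of formal power series with complex exponents. Specifically, one needs
\[
(1-\sigma(z))^{q/2}=(1-z)^{-q/2}
\]
as an identity of formal binomial series. This holds because both sides equal $\exp\bigl(-\tfrac q2\log(1-z)\bigr)$, using $\log(1-\sigma(z))=-\log(1-z)$ and the fact that composition with $\sigma\in z\CC[[z]]$ is a ring homomorphism commuting with $\exp$ and $\log$. With this identity, the factor $(1-z)^q$ in $\T_q$ combines with $(1-z)^{-q/2}$ to give $(1-z)^{q/2}$, as claimed.

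We also note that Corollary~\ref{cor:eigen-vanish} is stated for covariant $F$, which is the standing hypothesis of this subsection. It ultimately rests on Theorem~\ref{thm:structural-duality}(ii), applied to the arbitrary test series $h_j$.
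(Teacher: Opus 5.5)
Your proof is correct and is essentially the paper's argument: taking $g(u)=u^j$ in the diagonal form \eqref{eq:hdiag} makes $h$ a $(-1)^j$-eigenvector of $\T_q$ by \eqref{eq:Tdiag}, and Corollary~\ref{cor:eigen-vanish} then gives the vanishing when $r-j$ is odd. Your added check that $(1-\sigma(z))^{q/2}=(1-z)^{-q/2}$ holds as an identity of formal series for complex $q$ fills in a detail the paper leaves implicit.
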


Writing $f_k:=\coeff{z^k}F(z)$, the case $j=0$ becomes
\begin{equation}
\sum_{k=0}^r(-1)^{r-k}
\binom{(r-1-\alpha)/2}{r-k}f_k=0,
\qquad r\ \text{odd}.
\label{eq:general-master-recurrence}
\end{equation}
More generally, since $\T_q(1-z)^s=(1-z)^{q-s}$, the function
\[
(1-z)^s-(-1)^r(1-z)^{q-s}
\]
belongs to the eigenspace opposite to $(-1)^r$. Hence, for arbitrary $s\in\CC$,
\begin{equation}
\sum_{k=0}^r(-1)^{r-k}f_k
\left\{
\binom{s}{r-k}-(-1)^r\binom{q-s}{r-k}
\right\}=0.
\label{eq:two-param-family}
\end{equation}
These identities are recurrence-producing templates: different covariant series supply different coefficient sequences $f_k$ while the kernel is fixed by the M\"obius symmetry.

\subsection{Anti-invariant coordinates and coordinate convolutions}\label{subsec:convolution-dualities}

The parity description can be packaged in a form that is useful for convolution identities. We begin by recording explicitly all series that are invariant or anti-invariant under the M\"obius involution, as well as their weighted analogues for $\T_q$.

\begin{proposition}[Anti-invariant series and weighted eigenspaces]\label{prop:anti-invariant-characterization}
Let
\[
u:=\frac{z}{2-z}.
\]
A formal power series $U(z)\in\CC[[z]]$ satisfies
\begin{equation}
U(\sigma(z))=-U(z)
\label{eq:anti-invariant-characterization}
\end{equation}
if and only if there is a unique $\Phi(t)\in\CC[[t]]$ such that
\begin{equation}
U(z)=u\,\Phi(u^2).
\label{eq:anti-invariant-normal-form}
\end{equation}
More generally, for arbitrary $q\in\CC$, the two eigenspaces of
\[
(\T_qh)(z)=(1-z)^q h(\sigma(z))
\]
are
\begin{equation}
E_q^+:=(1-z)^{q/2}\CC[[u^2]],
\qquad
E_q^-:=(1-z)^{q/2}u\,\CC[[u^2]].
\label{eq:weighted-eigenspaces-explicit}
\end{equation}
\end{proposition}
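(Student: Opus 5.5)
The plan is to transport everything to the coordinate $u=z/(2-z)$, where $\sigma$ acts as $u\mapsto -u$, and read the statement off as ordinary parity. First I will record the needed facts about $u$. It is a formal power series with zero constant term and linear coefficient $1/2$, so it is compositionally invertible, with inverse $z=2u/(1+u)$. Hence every $U\in\CC[[z]]$ can be written uniquely as $U(z)=V(u)$ with $V\in\CC[[u]]$. A direct computation gives
\[
u(\sigma(z))=\frac{-z/(1-z)}{2+z/(1-z)}=-\frac{z}{2-z}=-u(z),
\]
which is the identity already used in the proof of Theorem~\ref{thm:structural-duality}. Therefore $U(\sigma(z))=V(-u)$. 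This substitution is legitimate because $\sigma(z)$ has zero constant term, so all compositions are defined formally.

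For the anti-invariant characterization, the condition \eqref{eq:anti-invariant-characterization} becomes $V(-u)=-V(u)$ in $\CC[[u]]$. I will use uniqueness of the $u$-expansion to identify coefficients, which says exactly that $V$ is odd. An odd series has the form $V(u)=u\Phi(u^2)$, and $\Phi$ is unique because its coefficients are the odd coefficients of $V$. The converse direction is immediate from $u\circ\sigma=-u$. This mirrors the even case in the last paragraph of the proof of Theorem~\ref{thm:structural-duality}.

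For the weighted eigenspaces, I will factor out the weight. Given $h$, set $g(z):=(1-z)^{-q/2}h(z)$, using the formal binomial series, which is valid because $1-z$ has constant term $1$. By \eqref{eq:sigma-properties}, $(1-\sigma(z))^{q/2}=(1-z)^{-q/2}$, and the power identity $\bigl((1-z)^{-1}\bigr)^{q/2}=(1-z)^{-q/2}$ holds formally. This gives
\[
\T_qh=(1-z)^q(1-z)^{-q/2}g(\sigma(z))=(1-z)^{q/2}g(\sigma(z)),
\]
which is \eqref{eq:Tdiag}. So $\T_qh=\pm h$ exactly when $g\circ\sigma=\pm g$. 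By the even case (as in Theorem~\ref{thm:structural-duality}) and the odd case just proved, this means $g\in\CC[[u^2]]$ or $g\in u\,\CC[[u^2]]$, which yields $E_q^\pm$.

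Finally, I will show that these exhaust the space. Since $\T_q^2=1$ by \eqref{eq:T-involution}, every $h$ splits as $h=\tfrac12(h+\T_qh)+\tfrac12(h-\T_qh)$, so $\CC[[z]]=E_q^+\oplus E_q^-$. The sum is direct because the intersection of $E_q^+$ and $E_q^-$ is $0$. The only step requiring care is the formal justification of the complex-power identity $(1-\sigma(z))^{q/2}=(1-z)^{-q/2}$. I would handle it by noting that both sides are the unique series with constant term $1$ whose logarithmic derivative matches, since $\log(1-\sigma(z))=-\log(1-z)$ formally. Beyond that, the argument is routine.
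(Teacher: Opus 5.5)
Your proposal is correct and follows essentially the paper's route. You use $u\circ\sigma=-u$ and $\CC[[z]]=\CC[[u]]$ to reduce anti-invariance to oddness in $u$, then write $h=(1-z)^{q/2}g$ and use $1-\sigma(z)=(1-z)^{-1}$ to conjugate $\T_q$ to $g(u)\mapsto g(-u)$. Your added justification of the formal complex power and of the decomposition $\CC[[z]]=E_q^+\oplus E_q^-$ is a valid refinement that the paper leaves implicit.
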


\begin{proof}
Since $u(\sigma(z))=-u(z)$ and $u=z/2+O(z^2)$, the series $u$ is a formal coordinate and hence $\CC[[z]]=\CC[[u]]$. Thus \eqref{eq:anti-invariant-characterization} says exactly that $U$, expressed as a series in $u$, is odd, which is equivalent to \eqref{eq:anti-invariant-normal-form}.

For the weighted statement, write
\[
h(z)=(1-z)^{q/2}g(u).
\]
Using $1-\sigma(z)=(1-z)^{-1}$ gives
\[
(\T_qh)(z)=(1-z)^{q/2}g(-u).
\]
Hence the $+1$ and $-1$ eigenspaces correspond respectively to the even and odd series in $u$, proving \eqref{eq:weighted-eigenspaces-explicit}.
\end{proof}

In particular, every anti-invariant series is an odd reparametrization of the canonical parity coordinate. If its linear coefficient is nonzero, then it is itself an invertible formal coordinate. Likewise, the fixed vectors of $\T_q$ are precisely the first space in \eqref{eq:weighted-eigenspaces-explicit}. This makes the following convolution construction essentially a parity pullback.

\begin{proposition}[Coordinate convolution duality]\label{prop:coordinate-convolution}
Let $r$ be a nonnegative integer, let $F$ be M\"obius-covariant of weight $\alpha$, and put $q:=r-1-\alpha$. Suppose that $U(z)\in z\CC[[z]]$ and $A(z)\in\CC[[z]]$ satisfy
\begin{equation}
U(\sigma(z))=-U(z),
\qquad
(1-z)^qA(\sigma(z))=A(z).
\label{eq:UA-symmetry}
\end{equation}
Then, for every $G(t)\in\CC[[t]]$,
\begin{equation}
\begin{aligned}
&\coeff{z^r}A(z)G(U(z))F(z)\\
&\qquad=
\coeff{z^r}A(-z)G(-U(-z))F(-z).
\end{aligned}
\label{eq:coordinate-convolution-duality}
\end{equation}
If
\begin{equation}
K_{r,j}(F;A,U):=\coeff{z^r}A(z)U(z)^jF(z),
\label{eq:coordinate-kernel}
\end{equation}
then
\begin{equation}
K_{r,j}(F;A,U)=0
\qquad\text{whenever}\qquad r+j\ \text{is odd}.
\label{eq:coordinate-kernel-vanishing}
\end{equation}
\end{proposition}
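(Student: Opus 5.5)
The plan is to apply \Cref{prop:eigenspace-duality} to the test series $h(z):=A(z)G(U(z))$. First we check that $h$ is a legitimate element of $\CC[[z]]$: since $U\in z\CC[[z]]$, the composition $G(U(z))$ is well defined. Next we compute $\T_qh$. Using \eqref{eq:UA-symmetry}, we get
\[
(\T_qh)(z)=(1-z)^qA(\sigma(z))\,G(U(\sigma(z)))=A(z)\,G(-U(z)).
\]
\Cref{prop:eigenspace-duality} then gives
\[
\coeff{z^r}A(z)G(U(z))F(z)=(-1)^r\coeff{z^r}A(z)G(-U(z))F(z).
\]

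To put this in the form \eqref{eq:coordinate-convolution-duality}, we substitute $z\mapsto -z$ on the right-hand side. This substitution multiplies the coefficient of $z^r$ by $(-1)^r$, so
\[
(-1)^r\coeff{z^r}A(z)G(-U(z))F(z)=\coeff{z^r}A(-z)G(-U(-z))F(-z),
\]
which is exactly \eqref{eq:coordinate-convolution-duality}. An alternative is to use \eqref{eq:main-duality} directly with the same $h$ and simplify $h(z/(1+z))(1+z)^{q}$. The key fact there is $\sigma(-z)=z/(1+z)$, so that expression equals $(\T_qh)(-z)$. The two routes agree, and I will present the eigenspace route because it is shorter.

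For the vanishing statement \eqref{eq:coordinate-kernel-vanishing}, take $G(t)=t^j$. The intermediate identity above then reads $K_{r,j}=(-1)^{r+j}K_{r,j}$, so $K_{r,j}=0$ whenever $r+j$ is odd. Equivalently, $A\in E_q^+$ and $U$ is odd in $u$, so $AU^j$ lies in the $(-1)^j$ eigenspace of $\T_q$, and \Cref{cor:eigen-vanish} applies.

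The only real obstacle is bookkeeping: the convention for $\T_q$ carries the factor $(1-z)^q$, and we must confirm that it is exactly absorbed by the hypothesis on $A$. We also need the sign $(-1)^r$ from $z\mapsto -z$ to match the sign in \eqref{eq:eigen-duality}. I would verify both with $A=1$, $q=0$ as a sanity check.
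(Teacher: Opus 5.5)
Your proof is correct and is essentially the paper's argument. The paper substitutes $h=A\,G(U)$ directly into \eqref{eq:main-duality} via $\sigma(-z)=z/(1+z)$, which is your stated alternative; your eigenspace route is the same identity before the substitution $z\mapsto-z$. The vanishing likewise comes from $\T_q(AU^j)=(-1)^jAU^j$ and \Cref{cor:eigen-vanish}.
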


\begin{proof}
Since $\sigma(-z)=z/(1+z)$, the two relations in \eqref{eq:UA-symmetry}, evaluated at $-z$, give
\[
U\!\left(\frac{z}{1+z}\right)=-U(-z)
\]
and
\[
(1+z)^qA\!\left(\frac{z}{1+z}\right)=A(-z).
\]
Substitution into the coefficient duality \eqref{eq:main-duality} with the test series $h(z)=A(z)G(U(z))$ proves \eqref{eq:coordinate-convolution-duality}. Moreover,
\[
\T_q\bigl(A(z)U(z)^j\bigr)=(-1)^jA(z)U(z)^j,
\]
so \eqref{eq:coordinate-kernel-vanishing} follows from Corollary~\ref{cor:eigen-vanish}.
\end{proof}

Different choices in the two normal forms of Proposition~\ref{prop:anti-invariant-characterization} therefore produce different convolution kernels while retaining the same underlying M\"obius symmetry.

\subsection{A weighted form of the duality}\label{subsec:weighted-duality}

The duality admits a one-parameter deformation that couples the M\"obius symmetry with a dilation.

\begin{proposition}[Weighted duality]\label{prop:weighted-duality}
Let $r$ be a nonnegative integer, let $F$ satisfy the equivalent conditions of Theorem~\ref{thm:structural-duality}, and put
\[
p:=\frac{r-1-\alpha}{2},
\qquad c\in\CC^\times.
\]
Then
\begin{equation}
\coeff{z^r}(1+z)^pF(-z/c)
=
c^{-r}\coeff{z^r}
\bigl(1+(c-2)z-(c-1)z^2\bigr)^pF(z).
\label{eq:weighted-duality}
\end{equation}
Equivalently,
\begin{equation}
1+(c-2)z-(c-1)z^2=(1-z)(1+(c-1)z).
\label{eq:weighted-factorization}
\end{equation}
\end{proposition}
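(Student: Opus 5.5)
The plan is to derive \eqref{eq:weighted-duality} in one step from the coefficient duality \eqref{eq:main-duality}, which $F$ satisfies by Theorem~\ref{thm:structural-duality}. The key choice is the test series
\[
h(z):=\bigl((1-z)(1+(c-1)z)\bigr)^p=(1-z)^p\bigl(1+(c-1)z\bigr)^p.
\]
With this $h$, the left-hand side $\coeff{z^r}h(z)F(z)$ of \eqref{eq:main-duality} is, by the factorization \eqref{eq:weighted-factorization}, exactly $c^r$ times the right-hand side of \eqref{eq:weighted-duality}. The factorization itself is a one-line expansion, so I would verify it first.

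Next I will compute the transformed test series on the right of \eqref{eq:main-duality}. Substituting $z\mapsto z/(1+z)$ gives
\[
1-\frac{z}{1+z}=\frac1{1+z}
\qquad\text{and}\qquad
1+(c-1)\frac{z}{1+z}=\frac{1+cz}{1+z}.
\]
Hence
\[
h\!\left(\frac{z}{1+z}\right)=(1+cz)^p(1+z)^{-2p}.
\]
Since $2p=r-1-\alpha$, this factor $(1+z)^{-2p}$ cancels the weight factor $(1+z)^{r-1-\alpha}$ exactly. The duality therefore reads
\[
\coeff{z^r}h(z)F(z)=\coeff{z^r}(1+cz)^pF(-z).
\]

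Finally I will rescale. For any $G\in\CC[[z]]$ one has $\coeff{z^r}G(cz)=c^r\coeff{z^r}G(z)$. I apply this with $G(z)=(1+z)^pF(-z/c)$, which is a formal series because $c\ne0$, and note that $G(cz)=(1+cz)^pF(-z)$. This gives
\[
\coeff{z^r}(1+cz)^pF(-z)=c^r\coeff{z^r}(1+z)^pF(-z/c).
\]
Dividing by $c^r$ yields \eqref{eq:weighted-duality}.

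The only point needing care, which I expect to be the main (mild) obstacle, is the legitimacy of the power manipulations for complex $p$. All the series involved have constant term $1$, so their $p$-th powers are defined by the binomial series, equivalently by $\exp(p\log(\cdot))$. The rules $(ab)^p=a^pb^p$, $(a^{-1})^p=a^{-p}$, and compatibility of $p$-th powers with substitution of a series without constant term then hold formally. I would state this once at the start of the proof and then run the computation above.
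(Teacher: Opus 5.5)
Your proposal is correct and follows essentially the paper's own proof. It uses the same test series $h(z)=\bigl((1-z)(1+(c-1)z)\bigr)^p$ in the coefficient duality, the same cancellation $h\!\left(\frac{z}{1+z}\right)(1+z)^{2p}=(1+cz)^p$, and the same rescaling $z\mapsto cz$. Your note that formal $p$-th powers of series with constant term $1$ behave well under products and substitution is a small piece of care that the paper leaves implicit.
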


\begin{proof}
After replacing $z$ by $cw$, the left-hand side of \eqref{eq:weighted-duality} becomes
\[
c^{-r}\coeff{w^r}(1+cw)^pF(-w).
\]
Apply \eqref{eq:main-duality} with
\[
h(z)=\bigl((1-z)(1+(c-1)z)\bigr)^p.
\]
Since $2p=r-1-\alpha$,
\[
h\!\left(\frac{w}{1+w}\right)(1+w)^{2p}=(1+cw)^p,
\]
and the result follows.
\end{proof}

The value $c=1$ returns the unscaled self-dual weight, whereas $c=2$ turns the quadratic into $1-z^2$ and therefore creates a parity collapse whenever the coefficient sequence itself has a sufficiently sparse odd part. The Bernoulli application in Section~\ref{sec:bernoulli} is the basic example. More generally, the factorization \eqref{eq:weighted-factorization} suggests multisection constructions, as discussed in Section~\ref{sec:further-directions}.

\section{Generalizations of the motivating Bernoulli identities}\label{sec:motivating-generalizations}

Theorem~\ref{thm:structural-duality} generalizes the coefficient duality of Theorem~2 in \cite{AlekseyevEtAl2025}. We now keep the M\"obius-covariant family arbitrary and use the machinery of Section~\ref{sec:general-theory} to extend the other statements of that paper that are built from the same Catalan and Chebyshev structure.

\subsection{Catalan convolution and parity}\label{subsec:Catalan-specialization}

The paper \cite{AlekseyevEtAl2025} contains a particularly natural instance of the coordinate-convolution construction involving the Catalan generating series
\[
C(z):=\frac{1-\sqrt{1-4z}}{2z}.
\]
Put
\begin{equation}
U_C(z):=C(z/4)-1,
\qquad
A_C(z):=\frac{2-C(z/4)}{1-z}.
\label{eq:Catalan-UA}
\end{equation}
Since
\begin{equation}
C(z/4)=\frac{2}{1+\sqrt{1-z}},
\label{eq:Catalan-sqrt}
\end{equation}
we have
\begin{equation}
U_C(z)=\frac{1-\sqrt{1-z}}{1+\sqrt{1-z}},
\qquad
A_C(z)=\frac{C(z/4)}{\sqrt{1-z}}.
\label{eq:Catalan-UA-sqrt}
\end{equation}
The relation with the canonical parity coordinate is especially simple:
\begin{equation}
u=\frac{z}{2-z}=\frac{2U_C(z)}{1+U_C(z)^2},
\qquad
U_C(z)=\frac{u}{1+\sqrt{1-u^2}}.
\label{eq:Catalan-canonical-coordinate}
\end{equation}
Thus $U_C=u\Phi(u^2)$ with $\Phi(t):=(1+\sqrt{1-t})^{-1}$, so its anti-invariance follows immediately from Proposition~\ref{prop:anti-invariant-characterization}. Moreover,
\begin{equation}
A_C(z)=(1-z)^{-3/4}\sqrt{1-U_C(z)^2},
\label{eq:Catalan-A-normal-form}
\end{equation}
and the factor multiplying $(1-z)^{-3/4}$ is an even series in the anti-invariant coordinate $U_C$, hence an even series in $u$. By \eqref{eq:weighted-eigenspaces-explicit}, $A_C\in E_{-3/2}^+$; equivalently,
\begin{equation}
U_C(\sigma(z))=-U_C(z),
\qquad
(1-z)^{-3/2}A_C(\sigma(z))=A_C(z).
\label{eq:Catalan-eigen}
\end{equation}
Thus the Catalan pair is not an isolated functional coincidence: it is an explicit instance of the complete eigenspace classification above, with shifted weight $q=-3/2$.

\begin{corollary}[Catalan--M\"obius convolution array]\label{cor:Catalan-convolution}
Let $\{F_\alpha(z)\}$ be a family such that $F_\alpha$ is M\"obius-covariant of weight $\alpha$, and define
\begin{equation}
f_m^{(r)}:=\coeff{z^m}F_{r+1/2}(z)\qquad(m\ge0).
\label{eq:Catalan-F-row}
\end{equation}
Thus $F_{r+1/2}(z)=\sum_{m\ge0}f_m^{(r)}z^m$.
For $r\ge0$ and $0\le j\le r$, define
\begin{equation}
\kappa_{r,j}^{F}
:=
\frac{1}{r+1/2}
\coeff{z^r}
F_{r+1/2}(z)A_C(z)U_C(z)^j.
\label{eq:Catalan-kernel-general}
\end{equation}
Then, for every $G(t)\in\CC[[t]]$,
\begin{equation}
\begin{aligned}
\sum_{j=0}^{r}\kappa_{r,j}^{F}\coeff{t^j}G(t)
&=
\frac{1}{r+1/2}\coeff{z^r}
F_{r+1/2}(z)A_C(z)G(U_C(z))\\
&=
\frac{1}{r+1/2}\coeff{z^r}
F_{r+1/2}(-z)A_C(-z)G(-U_C(-z)).
\end{aligned}
\label{eq:Catalan-convolution-general}
\end{equation}
The entries of the array have the explicit form
\begin{equation}
\kappa_{r,j}^{F}
=
\frac{1}{(r+1/2)4^r}
\sum_{m=0}^{r-j}
4^m
\binom{2r-2m+1}{r-j-m}
f_m^{(r)}.
\label{eq:Catalan-kernel-explicit}
\end{equation}
Moreover,
\begin{equation}
\kappa_{r,j}^{F}=0
\qquad\text{if }r+j\text{ is odd}.
\label{eq:Catalan-kernel-zero}
\end{equation}
If $F_{r+1/2}(0)\ne0$ for every $r$, then the lower-triangular array
$\bigl(\kappa_{r,j}^{F}\bigr)_{r,j\ge0}$ is nonsingular, with diagonal entries
\begin{equation}
\kappa_{r,r}^{F}
=
\frac{F_{r+1/2}(0)}{(r+1/2)4^r}.
\label{eq:Catalan-kernel-diagonal}
\end{equation}
Thus, after separating even and odd indices, it defines two independent triangular transforms.
\end{corollary}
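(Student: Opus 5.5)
The plan is to specialize Proposition~\ref{prop:coordinate-convolution} to the Catalan pair. Take $\alpha=r+1/2$, so that $F=F_{r+1/2}$ is M\"obius-covariant of weight $\alpha$ and $q=r-1-\alpha=-3/2$. Take $A=A_C$ and $U=U_C$. By \eqref{eq:Catalan-eigen} these satisfy the hypotheses \eqref{eq:UA-symmetry}, and $U_C\in z\CC[[z]]$ by \eqref{eq:Catalan-UA-sqrt}. Then \eqref{eq:coordinate-convolution-duality}, divided by $r+1/2$, is exactly the second equality in \eqref{eq:Catalan-convolution-general}. Likewise \eqref{eq:coordinate-kernel-vanishing} with $K_{r,j}=(r+1/2)\kappa^F_{r,j}$ gives \eqref{eq:Catalan-kernel-zero}.

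For the first equality in \eqref{eq:Catalan-convolution-general}, expand $G(U_C(z))=\sum_j \coeff{t^j}G(t)\,U_C(z)^j$. This is legitimate because $U_C=O(z)$, and for the same reason every term with $j>r$ contributes nothing to $\coeff{z^r}$. Linearity of coefficient extraction then gives the finite sum over $0\le j\le r$.

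For the explicit formula \eqref{eq:Catalan-kernel-explicit}, put $x=z/4$ and use the Catalan relation $C(x)-1=xC(x)^2$. This gives $U_C^j=x^jC(x)^{2j}$, and hence, by \eqref{eq:Catalan-UA-sqrt},
\[
A_C(z)U_C(z)^j=\frac{x^jC(x)^{2j+1}}{\sqrt{1-4x}}.
\]
I will then invoke the classical expansion $C(x)^k/\sqrt{1-4x}=\sum_{n\ge0}\binom{2n+k}{n}x^n$. It can be proved by Lagrange inversion, or quoted from standard sources such as \cite{Gessel2016}. This gives
\[
\coeff{z^{N}}A_CU_C^j=4^{-N}\binom{2N+1}{N-j}\qquad (N\ge j).
\]
Convolving with $F_{r+1/2}=\sum_m f^{(r)}_m z^m$ and setting $N=r-m$ yields
\[
\coeff{z^r}F_{r+1/2}A_CU_C^j=\sum_{m=0}^{r-j} f^{(r)}_m\,4^{m-r}\binom{2r-2m+1}{r-j-m},
\]
which is \eqref{eq:Catalan-kernel-explicit} after dividing by $r+1/2$.

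The diagonal case $j=r$ leaves only $m=0$, with binomial coefficient $\binom{2r+1}{0}=1$, so \eqref{eq:Catalan-kernel-diagonal} follows. The formula also shows $\kappa^F_{r,j}=0$ for $j>r$, so the array is lower triangular. Its diagonal entries are nonzero when $F_{r+1/2}(0)\ne0$, which gives nonsingularity. Finally, by \eqref{eq:Catalan-kernel-zero} only entries with $r\equiv j\pmod 2$ survive. Restricting to even $(r,j)$ and to odd $(r,j)$ therefore gives two lower-triangular arrays, each with the same nonzero diagonal entries, hence two independent invertible transforms.

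The only step beyond bookkeeping is the explicit kernel, namely the identification $A_CU_C^j=x^jC^{2j+1}/\sqrt{1-4x}$ together with the generating function for $C^k/\sqrt{1-4x}$. I would verify that identity first, and if in doubt derive it directly by Lagrange inversion applied to $C=1+xC^2$.
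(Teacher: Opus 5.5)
Your proposal is correct and follows essentially the same route as the paper. You specialize Proposition~\ref{prop:coordinate-convolution} with $\alpha=r+1/2$, $q=-3/2$ to get the convolution identity and the parity vanishing, and you obtain the explicit entries from $U_C^j=(z/4)^jC(z/4)^{2j}$ together with the Lagrange-inversion expansion of $C(z/4)^{2j+1}/\sqrt{1-z}$, reading off the diagonal at $j=r$; your remarks on lower-triangularity and the even/odd splitting only spell out what the paper leaves implicit.
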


\begin{proof}
For $\alpha=r+1/2$ one has $q=-3/2$, so \eqref{eq:Catalan-convolution-general} and \eqref{eq:Catalan-kernel-zero} follow from Proposition~\ref{prop:coordinate-convolution}. The sum may be truncated at $j=r$ because $U_C(z)$ has order one.

For the explicit entries, use
\[
U_C(z)^j=\left(\frac z4\right)^j C(z/4)^{2j}
\]
and the Catalan coefficient identity
\begin{equation}
\frac{C(z/4)^{2j+1}}{\sqrt{1-z}}
=
\sum_{n\ge0}
4^{-n}\binom{2n+2j+1}{n}z^n,
\label{eq:Catalan-binomial-expansion}
\end{equation}
which follows directly from Lagrange inversion applied to
$U_C=(z/4)(1+U_C)^2$. Since $A_C=C(z/4)(1-z)^{-1/2}$, substitution of \eqref{eq:Catalan-F-row} and coefficient extraction give \eqref{eq:Catalan-kernel-explicit}. Setting $j=r$ yields \eqref{eq:Catalan-kernel-diagonal}, proving the final assertion.
\end{proof}

The vanishing in \eqref{eq:Catalan-kernel-zero} generalizes the parity-zero pattern in Theorem~5 of \cite{AlekseyevEtAl2025}; the additional vanishing at the first entry in that theorem is specific to the Bernoulli family. For $F_\alpha=\Bser_\alpha$, Corollary~\ref{cor:Catalan-convolution} is exactly the convolution duality appearing as Theorem~6 of \cite{AlekseyevEtAl2025}.

\subsection{Chebyshev evaluation}\label{subsec:Catalan-Chebyshev}

The same convolution array also has a simple action on the Chebyshev bases used in Lemma~8 of \cite{AlekseyevEtAl2025}. To avoid confusion with the generalized central trinomial coefficients in \Cref{sec:return-walks}, write $T_n^{\mathrm{Ch}}$ and $U_n^{\mathrm{Ch}}$ for the Chebyshev polynomials of the first and second kinds. For an indeterminate $x$, put
\begin{equation}
\begin{aligned}
\mathcal T_x(t)&:=\sum_{j\ge0}T_{2j+1}^{\mathrm{Ch}}(\sqrt{x+1})t^j
=\frac{\sqrt{x+1}(1-t)}{1-2(2x+1)t+t^2},\\
\mathcal U_x(t)&:=\sum_{j\ge0}U_{2j}^{\mathrm{Ch}}(\sqrt{x+1})t^j
=\frac{1+t}{1-2(2x+1)t+t^2}.
\end{aligned}
\label{eq:Chebyshev-generating-functions}
\end{equation}
These are simply the odd and even parts of the classical Chebyshev generating functions. The Catalan coordinate turns them into resolvents:
\begin{equation}
A_C(z)\mathcal T_x(U_C(z))=\frac{\sqrt{x+1}}{1-(x+1)z},
\qquad
A_C(-z)\mathcal U_x(-U_C(-z))=\frac{1}{1-xz}.
\label{eq:Catalan-Chebyshev-resolvents}
\end{equation}

\begin{corollary}[Chebyshev evaluation of the Catalan--M\"obius array]\label{cor:Catalan-Chebyshev}
Under the assumptions of Corollary~\ref{cor:Catalan-convolution}, let $\alpha:=r+1/2$ and define
\begin{equation}
R_{r,F}^{\pm}(x)
:=
\frac1\alpha\coeff{z^r}\frac{F_\alpha(\pm z)}{1-xz}.
\label{eq:Chebyshev-reciprocal-polynomials}
\end{equation}
Equivalently, with the notation of \eqref{eq:Catalan-F-row},
\begin{equation}
R_{r,F}^{+}(x)=\frac1\alpha\sum_{m=0}^r f_m^{(r)}x^{r-m},
\qquad
R_{r,F}^{-}(x)=\frac1\alpha\sum_{m=0}^r(-1)^m f_m^{(r)}x^{r-m}.
\label{eq:Chebyshev-reciprocal-explicit}
\end{equation}
Then
\begin{equation}
\sum_{j=0}^r\kappa_{r,j}^{F}
T_{2j+1}^{\mathrm{Ch}}(\sqrt{x+1})
=\sqrt{x+1}\,R_{r,F}^{+}(x+1),
\label{eq:Catalan-Chebyshev-T}
\end{equation}
and
\begin{equation}
\sum_{j=0}^r\kappa_{r,j}^{F}
U_{2j}^{\mathrm{Ch}}(\sqrt{x+1})
=R_{r,F}^{-}(x).
\label{eq:Catalan-Chebyshev-U}
\end{equation}
\end{corollary}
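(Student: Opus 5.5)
The plan is to apply the convolution identity \eqref{eq:Catalan-convolution-general} of Corollary~\ref{cor:Catalan-convolution} twice, with $G=\mathcal T_x$ and with $G=\mathcal U_x$, and then to collapse each right-hand side using the resolvent identities \eqref{eq:Catalan-Chebyshev-resolvents}.

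For the $T$-identity I would take $G(t)=\mathcal T_x(t)$, so that $\coeff{t^j}G=T_{2j+1}^{\mathrm{Ch}}(\sqrt{x+1})$. The first line of \eqref{eq:Catalan-convolution-general} then reads
\[
\sum_{j=0}^r\kappa_{r,j}^F T_{2j+1}^{\mathrm{Ch}}(\sqrt{x+1})
=\frac1\alpha\coeff{z^r}F_\alpha(z)A_C(z)\mathcal T_x(U_C(z)).
\]
By the first resolvent identity, the right-hand side equals $\sqrt{x+1}\,\frac1\alpha\coeff{z^r}F_\alpha(z)/(1-(x+1)z)$. This is $\sqrt{x+1}\,R^+_{r,F}(x+1)$ by \eqref{eq:Chebyshev-reciprocal-polynomials}.

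For the $U$-identity I would take $G(t)=\mathcal U_x(t)$ and use the second, dual line of \eqref{eq:Catalan-convolution-general}:
\[
\frac1\alpha\coeff{z^r}F_\alpha(-z)A_C(-z)\mathcal U_x(-U_C(-z)).
\]
The second resolvent identity turns this into $\frac1\alpha\coeff{z^r}F_\alpha(-z)/(1-xz)=R^-_{r,F}(x)$. The explicit forms \eqref{eq:Chebyshev-reciprocal-explicit} then follow by expanding $(1-xz)^{-1}$ and extracting the coefficient of $z^r$.

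Two technical points need care.

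\emph{Truncation at $j=r$.} The generating functions $\mathcal T_x$ and $\mathcal U_x$ are infinite series, while the left-hand sums stop at $j=r$. This is harmless because $U_C$ has order one, so the terms with $j>r$ contribute nothing to $\coeff{z^r}$. The same reasoning was already used in the proof of Corollary~\ref{cor:Catalan-convolution}. The identity $G\mapsto$ coefficient extraction is linear and valid over $\CC[x]$, with $\sqrt{x+1}$ treated as a formal symbol.

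\emph{Verifying \eqref{eq:Catalan-Chebyshev-resolvents}.} This is the main obstacle, since the resolvents are stated without proof. I would put $s=\sqrt{1-z}$, so that $U_C=(1-s)/(1+s)$ and $A_C=2/(s(1+s))$. For the first identity I would compute
\[
1-2(2x+1)U+U^2=\frac{(1+s)^2-2(2x+1)(1-s^2)+(1-s)^2}{(1+s)^2}
=\frac{2\bigl(2(x+1)s^2-2x\bigr)}{(1+s)^2}\cdot\frac{1}{1}.
\]
Since $s^2=1-z$, this numerator becomes $4(1-(x+1)z)$. I would also use $1-U=2s/(1+s)$. Combining these gives $A_C\,\mathcal T_x(U_C)=\sqrt{x+1}/(1-(x+1)z)$.

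For the second identity, replacing $z$ by $-z$ changes $s$ to $s'=\sqrt{1+z}$. Then $-U_C(-z)=(s'-1)/(s'+1)$ and $1+t=2s'/(1+s')$. The denominator becomes $\bigl((1+s')^2-2(2x+1)(s'^2-1)+(1-s')^2\bigr)/(1+s')^2=4(1-xz)/(1+s')^2$ after the analogous simplification. Multiplying by $A_C(-z)=2/(s'(1+s'))$ then yields $1/(1-xz)$. These are routine algebraic checks, and carrying them out completes the proof.
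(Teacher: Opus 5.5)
Your proposal is correct and follows the paper's proof: apply \eqref{eq:Catalan-convolution-general} with $G=\mathcal T_x$ in its first representation and with $G=\mathcal U_x$ in its second, collapse each side using \eqref{eq:Catalan-Chebyshev-resolvents}, and expand $(1-xz)^{-1}$ for the explicit forms. Your one addition is the explicit check of the resolvent identities via $s=\sqrt{1-z}$, which the paper states without proof, and that computation is correct.
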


\begin{proof}
Apply \eqref{eq:Catalan-convolution-general} first with $G=\mathcal T_x$ and its first coefficient representation, and then with $G=\mathcal U_x$ and its second coefficient representation. The two identities in \eqref{eq:Catalan-Chebyshev-resolvents} give \eqref{eq:Catalan-Chebyshev-T} and \eqref{eq:Catalan-Chebyshev-U}, respectively. Formula \eqref{eq:Chebyshev-reciprocal-explicit} follows directly by expanding $(1-xz)^{-1}$.
\end{proof}

Corollary~\ref{cor:Catalan-Chebyshev} generalizes Lemma~8 of \cite{AlekseyevEtAl2025}.

\subsection{Ramanujan-type summation for covariant families}\label{subsec:general-Ramanujan}

The Chebyshev evaluation also recovers, in a general form, the summation identity that originally led to the Bernoulli duality. For integers $n\ge0$ and $m\ge3$, define the Ramanujan tail
\begin{equation}
\tau_{\mathrm R}(n,m)
:=
\sum_{\nu\ge0}
\frac{1}{\bigl(\sqrt{n+\nu}+\sqrt{n+\nu+1}\bigr)^m}.
\label{eq:Ramanujan-tail-general}
\end{equation}
This is the function denoted $\tau(n,m)$ in \cite{AlekseyevEtAl2025}; the subscript is used here to distinguish it from M\"obius involutions appearing later. The series converges for $m\ge3$.

\begin{theorem}[Ramanujan-type summation for covariant families]\label{thm:general-Ramanujan-sum}
Under the assumptions of Corollary~\ref{cor:Catalan-convolution}, let $r\ge1$, put $\alpha:=r+1/2$, and let $R_{r,F}^{\pm}$ be as in Corollary~\ref{cor:Catalan-Chebyshev}. Define
\begin{equation}
D_{r,F}(x):=R_{r,F}^{-}(x)-R_{r,F}^{+}(x),
\qquad
s_{r,F}(n):=\sqrt n\,D_{r,F}(n).
\label{eq:general-Ramanujan-summand}
\end{equation}
Then, for every positive integer $n$,
\begin{equation}
\sum_{\ell=1}^{n}s_{r,F}(\ell)
=
C_{r,F}
+\sqrt n\bigl(R_{r,F}^{-}(n)-\kappa_{r,0}^{F}\bigr)
+\sum_{j=1}^{r}\kappa_{r,j}^{F}\tau_{\mathrm R}(n,2j+1),
\label{eq:general-Ramanujan-sum}
\end{equation}
where
\begin{equation}
C_{r,F}
:=
s_{r,F}(1)-R_{r,F}^{-}(1)+\kappa_{r,0}^{F}
-\sum_{j=1}^{r}\kappa_{r,j}^{F}\tau_{\mathrm R}(1,2j+1).
\label{eq:general-Ramanujan-constant}
\end{equation}
Moreover, if $F_{r+1/2}(z)=\sum_{m\ge0}f_m^{(r)}z^m$, then
\begin{equation}
D_{r,F}(x)
=-\frac{2}{r+1/2}
\sum_{\substack{1\le m\le r\\ m\text{ odd}}}
f_m^{(r)}x^{r-m},
\label{eq:general-Ramanujan-defect}
\end{equation}
and hence the summand is the finite half-integer-power combination
\begin{equation}
s_{r,F}(n)
=-\frac{2}{r+1/2}
\sum_{\substack{1\le m\le r\\ m\text{ odd}}}
f_m^{(r)}n^{r-m+1/2}.
\label{eq:general-Ramanujan-summand-explicit}
\end{equation}
\end{theorem}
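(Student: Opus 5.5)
The plan is to evaluate both Chebyshev identities of Corollary~\ref{cor:Catalan-Chebyshev} at the integer point $x=n$, use hyperbolic parametrization to turn them into a one-step telescoping relation for $s_{r,F}$, and then sum. The defect formulas \eqref{eq:general-Ramanujan-defect}--\eqref{eq:general-Ramanujan-summand-explicit} are immediate and I would handle them first. By \eqref{eq:Chebyshev-reciprocal-explicit},
\[
R_{r,F}^{-}(x)-R_{r,F}^{+}(x)=\frac1\alpha\sum_{m=0}^r\bigl((-1)^m-1\bigr)f_m^{(r)}x^{r-m}.
\]
Only odd $m$ survive, each with factor $-2$. Multiplying by $\sqrt n$ gives the half-integer powers $n^{r-m+1/2}$.

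For the summation identity, fix an integer $n\ge1$ and put $a_n:=\sqrt n+\sqrt{n+1}$. Since $a_n^{-1}=\sqrt{n+1}-\sqrt n$, writing $a_n=e^\theta$ gives $\cosh\theta=\sqrt{n+1}$ and $\sinh\theta=\sqrt n$. The standard evaluations $T_{2j+1}^{\mathrm{Ch}}(\cosh\theta)=\cosh((2j+1)\theta)$ and $U_{2j}^{\mathrm{Ch}}(\cosh\theta)=\sinh((2j+1)\theta)/\sinh\theta$ then yield
\[
\sqrt n\,U_{2j}^{\mathrm{Ch}}(\sqrt{n+1})-T_{2j+1}^{\mathrm{Ch}}(\sqrt{n+1})=-a_n^{-(2j+1)}.
\]
I would multiply \eqref{eq:Catalan-Chebyshev-U} at $x=n$ by $\sqrt n$, subtract \eqref{eq:Catalan-Chebyshev-T} at $x=n$, and use this relation. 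The result is the key identity
\[
\sqrt{n+1}\,R_{r,F}^{+}(n+1)=\sqrt n\,R_{r,F}^{-}(n)+\sum_{j=0}^r\kappa_{r,j}^{F}a_n^{-(2j+1)}.
\]

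Set $Q(n):=\sqrt n\,R_{r,F}^{-}(n)-\kappa_{r,0}^{F}\sqrt n$. The key identity shows that $s_{r,F}(n+1)=\sqrt{n+1}\,R^-_{r,F}(n+1)-\sqrt{n+1}\,R^+_{r,F}(n+1)$ equals $\sqrt{n+1}R^-(n+1)-\sqrt nR^-(n)-\sum_j\kappa_{r,j}^Fa_n^{-(2j+1)}$. The $j=0$ term equals $\kappa_{r,0}^F(\sqrt{n+1}-\sqrt n)$, so it is absorbed into the difference of $Q$, leaving
\[
s_{r,F}(\nu+1)=Q(\nu+1)-Q(\nu)-\sum_{j=1}^r\kappa_{r,j}^{F}a_\nu^{-(2j+1)}.
\]
Next I would sum this over $\nu=1,\dots,n-1$ and add $s_{r,F}(1)$. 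The $Q$ terms telescope to $Q(n)-Q(1)$. Since $2j+1\ge3$, the tails converge, and $\sum_{\nu=1}^{n-1}a_\nu^{-(2j+1)}=\tau_{\mathrm R}(1,2j+1)-\tau_{\mathrm R}(n,2j+1)$ by \eqref{eq:Ramanujan-tail-general}. Collecting the $n$-independent terms, namely $s_{r,F}(1)-Q(1)-\sum_{j\ge1}\kappa_{r,j}^F\tau_{\mathrm R}(1,2j+1)$, gives exactly $C_{r,F}$ of \eqref{eq:general-Ramanujan-constant}, and this proves \eqref{eq:general-Ramanujan-sum}.

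The main obstacle is the bookkeeping in the key identity. Two points need care: the mismatched arguments ($R^+$ is evaluated at $x+1=n+1$ while $R^-$ is evaluated at $x=n$), and the separate treatment of the $j=0$ term, which is not a convergent tail but telescopes into the $\sqrt n$ correction. Once those are right, the rest is routine.
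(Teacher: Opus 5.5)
Your proof is correct and follows the paper's route. Both arguments combine the relation $(\sqrt{n+1}-\sqrt n)^{2j+1}=T_{2j+1}^{\mathrm{Ch}}(\sqrt{n+1})-\sqrt n\,U_{2j}^{\mathrm{Ch}}(\sqrt{n+1})$ with the two evaluations of Corollary~\ref{cor:Catalan-Chebyshev} at $x=n$, then telescope, and read the defect off \eqref{eq:Chebyshev-reciprocal-explicit}. The only difference is bookkeeping: you start the telescoping at $\nu=1$ and add $s_{r,F}(1)$, so $C_{r,F}$ appears directly, whereas the paper sums from $n=0$ and converts the constant using $R_{r,F}^{+}(1)=\sum_{j}\kappa_{r,j}^{F}$ and $\tau_{\mathrm R}(0,m)=1+\tau_{\mathrm R}(1,m)$.
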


\begin{proof}
Put
\[
d_n:=\sqrt{n+1}-\sqrt n
=\frac{1}{\sqrt n+\sqrt{n+1}}.
\]
For $j\ge0$, the standard Chebyshev identity for $x^2-y^2=1$ gives
\begin{equation}
d_n^{2j+1}
=
T_{2j+1}^{\mathrm{Ch}}(\sqrt{n+1})
-\sqrt n\,U_{2j}^{\mathrm{Ch}}(\sqrt{n+1}).
\label{eq:Chebyshev-difference-power}
\end{equation}
Multiplying by $\kappa_{r,j}^{F}$, summing over $0\le j\le r$, and applying Corollary~\ref{cor:Catalan-Chebyshev} with $x=n$ yields
\begin{equation}
\sum_{j=0}^{r}\kappa_{r,j}^{F}d_n^{2j+1}
=
\sqrt{n+1}\,R_{r,F}^{+}(n+1)
-\sqrt n\,R_{r,F}^{-}(n).
\label{eq:general-Ramanujan-first-difference}
\end{equation}
Since $R_{r,F}^{+}=R_{r,F}^{-}-D_{r,F}$, this is equivalent to
\[
s_{r,F}(n+1)
=
\sqrt{n+1}\,R_{r,F}^{-}(n+1)
-\sqrt n\,R_{r,F}^{-}(n)
-\sum_{j=0}^{r}\kappa_{r,j}^{F}d_n^{2j+1}.
\]
Summing from $n=0$ to $n=N-1$ telescopes the first two terms. The $j=0$ contribution is
$\sum_{n=0}^{N-1}d_n=\sqrt N$, whereas for $j\ge1$,
\[
\sum_{n=0}^{N-1}d_n^{2j+1}
=\tau_{\mathrm R}(0,2j+1)-\tau_{\mathrm R}(N,2j+1).
\]
Hence
\[
\sum_{\ell=1}^{N}s_{r,F}(\ell)
=
-\sum_{j=1}^{r}\kappa_{r,j}^{F}\tau_{\mathrm R}(0,2j+1)
+\sqrt N\bigl(R_{r,F}^{-}(N)-\kappa_{r,0}^{F}\bigr)
+\sum_{j=1}^{r}\kappa_{r,j}^{F}\tau_{\mathrm R}(N,2j+1).
\]
At $n=0$, Equation~\eqref{eq:general-Ramanujan-first-difference} gives
$R_{r,F}^{+}(1)=\sum_{j=0}^{r}\kappa_{r,j}^{F}$. Together with
$\tau_{\mathrm R}(0,m)=1+\tau_{\mathrm R}(1,m)$ and
$s_{r,F}(1)=R_{r,F}^{-}(1)-R_{r,F}^{+}(1)$, this identifies the constant term with \eqref{eq:general-Ramanujan-constant} and proves \eqref{eq:general-Ramanujan-sum}. Finally, subtracting the two expressions in \eqref{eq:Chebyshev-reciprocal-explicit} gives \eqref{eq:general-Ramanujan-defect}, and \eqref{eq:general-Ramanujan-summand-explicit} follows from the definition of $s_{r,F}$.
\end{proof}

For the Bernoulli family, the odd-part identity
\[
\Bser_\alpha(-z)-\Bser_\alpha(z)=\alpha z
\]
gives $D_{r,\Bser}(x)=x^{r-1}$ and therefore
\[
s_{r,\Bser}(n)=n^{r-1/2}.
\]
Under $k=2r-1$, the Bernoulli coefficient definitions identify $R_{r,\Bser}^{-}$ with $P_k$ and $\kappa_{r,j}^{\Bser}$ with $A_{2j+1}^{k}$ of \cite{AlekseyevEtAl2025}. Together with the Bernoulli-specific vanishing $\kappa_{r,0}^{\Bser}=0$, Theorem~\ref{thm:general-Ramanujan-sum} therefore specializes exactly to Theorem~1 there. Thus the pure half-integer power in that theorem is a consequence of the exceptionally sparse odd part of the Bernoulli series; for a general covariant family, the natural summand is the finite combination in \eqref{eq:general-Ramanujan-summand-explicit}.

\section{Bernoulli series and arithmetic applications}\label{sec:bernoulli}

Let $B_n$ denote the Bernoulli numbers, with
\begin{equation}
\sum_{n\ge0}B_n\frac{t^n}{n!}
:=\frac{t}{e^t-1},
\qquad B_1=-\frac12,
\label{eq:Bernoulli-egf}
\end{equation}
and define
\begin{equation}
\Bser_\alpha(z):=\sum_{k\ge0}\binom{\alpha}{k}B_kz^k.
\label{eq:def-Bseries}
\end{equation}
These formal Bernoulli series were introduced in \cite{AlekseyevEtAl2025} in the study of Ramanujan's formulas for sums of positive half-integer powers. Here $\alpha\in\CC$ throughout the formal identities; the classical recurrences later in this section arise from the indicated integral specializations of $\alpha$. The functional identity underlying the duality proved there is
\begin{equation}
\Bser_\alpha\!\left(-\frac{z}{1-z}\right)
=(1-z)^{-\alpha}\Bser_\alpha(z).
\label{eq:covariance-B}
\end{equation}
For nonnegative integral $\alpha$, this follows from the reflection formula for Bernoulli polynomials \cite[Section~24.4]{DLMF24}; the general case follows coefficientwise in $\alpha$. Thus $\Bser_\alpha$ is a particular instance of Theorem~\ref{thm:structural-duality}, and its coefficient duality is
\begin{equation}
\coeff{z^r}h(z)\Bser_\alpha(z)
=
\coeff{z^r}h\!\left(\frac{z}{1+z}\right)
(1+z)^{r-1-\alpha}\Bser_\alpha(-z),
\label{eq:Bernoulli-duality}
\end{equation}
which is exactly the identity introduced in \cite{AlekseyevEtAl2025}. The published statement is formulated for real $\alpha$; for fixed $r$, polynomiality in $\alpha$ extends it coefficientwise to complex $\alpha$.

A second elementary property is
\begin{equation}
\Bser_\alpha(-z)=\Bser_\alpha(z)+\alpha z,
\label{eq:parity-B}
\end{equation}
which follows from $B_{2j+1}=0$ for $j\ge1$. Combining this sparse odd part with the general eigenspace and weighted identities gives particularly short arithmetic recurrences. For example, \eqref{eq:general-master-recurrence} specializes to
\begin{equation}
\sum_{k=0}^r(-1)^{r-k}
\binom{(r-1-\alpha)/2}{r-k}
\binom{\alpha}{k}B_k=0,
\qquad r\ \text{odd},
\label{eq:master-recurrence}
\end{equation}
and \eqref{eq:two-param-family} yields the corresponding two-parameter family with $f_k=\binom{\alpha}{k}B_k$.

\subsection{The recurrences of Chang and Ha}

\begin{corollary}[Chang--Ha Bernoulli recurrences \litref{ChangHa2001}{Theorem~1}]\label{cor:Chang-Ha-Bernoulli}
For integers $n\ge1$ and $m\ge0$,
\begin{equation}
\sum_{k=[n-m]_+}^{2n}
\binom{m+n+1}{m-n+k}
\binom{2m+k+1}{k}B_k
=0,
\label{eq:CHa}
\end{equation}
and
\begin{equation}
\sum_{k=[n-m]_+}^{2n}
\binom{m+n+1}{m-n+k}
\binom{2m+k+1}{k}
\frac{B_k}{2^k}
=
(-1)^n
\frac{m+1}{2^{2n+1}}
\binom{m+n+1}{n}.
\label{eq:CHb}
\end{equation}
Both identities follow from the general eigenspace machinery and Proposition~\ref{prop:weighted-duality}, specialized to $F=\Bser_\alpha$.
\end{corollary}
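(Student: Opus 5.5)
The plan is to specialize the general machinery to $F=\Bser_\alpha$ with the negative integral weight $\alpha:=-2m-2$ and odd extraction degree $r:=2n+1$. The starting observation is the sign-reversal identity
\[
\binom{2m+k+1}{k}=(-1)^k\binom{-2m-2}{k},
\]
which gives $f_k:=\coeff{z^k}\Bser_\alpha(z)=(-1)^k\binom{2m+k+1}{k}B_k$. With these choices,
\[
p=\frac{r-1-\alpha}{2}=m+n+1,
\qquad
r-k=2n+1-k=(m+n+1)-(m-n+k).
\]
Hence $\binom{p}{r-k}=\binom{m+n+1}{m-n+k}$, which is exactly the first binomial factor in both \eqref{eq:CHa} and \eqref{eq:CHb}.

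For \eqref{eq:CHa}, apply the master recurrence \eqref{eq:master-recurrence} (the case $j=0$ of Corollary~\ref{cor:general-hierarchy}) with these parameters. The sign $(-1)^{r-k}(-1)^k=(-1)^r=-1$ is independent of $k$, so after multiplying by $-1$ the recurrence reads
\[
\sum_{k=0}^{2n+1}\binom{m+n+1}{m-n+k}\binom{2m+k+1}{k}B_k=0 .
\]
Two boundary adjustments then give the stated summation range. The top term $k=2n+1$ vanishes because $B_{2n+1}=0$ for $n\ge1$. The terms with $k<n-m$ vanish because the lower index $m-n+k$ is then negative, which explains the starting point $[n-m]_+$.

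For \eqref{eq:CHb}, use the weighted duality, Proposition~\ref{prop:weighted-duality}, with $c=2$, so that the quadratic becomes $1-z^2$. Expanding the left-hand side of \eqref{eq:weighted-duality} gives
\[
\sum_k\binom{p}{r-k}\left(-\tfrac12\right)^k f_k .
\]
The signs again cancel, yielding $\sum_k\binom{m+n+1}{m-n+k}\binom{2m+k+1}{k}B_k2^{-k}$; the $k=2n+1$ term drops out as before. On the right-hand side, \eqref{eq:parity-B} shows that the odd part of $\Bser_\alpha(z)$ is just $-\alpha z/2$. Since $(1-z^2)^p$ is even and $r$ is odd, only this linear term contributes, and
\[
2^{-r}\coeff{z^r}(1-z^2)^p\Bser_\alpha(z)
=2^{-(2n+1)}\Bigl(-\frac{\alpha}{2}\Bigr)(-1)^n\binom{p}{n}.
\]
Substituting $-\alpha/2=m+1$ and $p=m+n+1$ gives the right-hand side of \eqref{eq:CHb}.

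The only delicate point is the sign and range bookkeeping: the $(-1)^k$ from negating the upper binomial index, the $(-1/2)^k$ from the dilation, and the truncations at both ends of the sum. I will check the final identities on a small case, for instance $n=1$ and $m=0$, to confirm that no sign has been lost.
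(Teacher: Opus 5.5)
Your proposal is correct and follows the paper's proof essentially step for step. For \eqref{eq:CHa}, both arguments rest on the vanishing of the odd coefficient $\coeff{z^{2n+1}}(1-z)^{m+n+1}\Bser_{-2m-2}(z)$; the paper phrases this via the fixed vector $h=(1-z)^{M}$ of $\T_{2M}$, which amounts to your appeal to the master recurrence. For \eqref{eq:CHb}, both use the $c=2$ case of Proposition~\ref{prop:weighted-duality}, where only the linear term $(m+1)z$ of $\Bser_{-2m-2}$ survives against the even factor $(1-z^2)^{m+n+1}$. Your sign and range bookkeeping is right; for example, at $n=1$, $m=0$ both sides of \eqref{eq:CHa} equal $0$ and both sides of \eqref{eq:CHb} equal $-1/4$.
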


\begin{proof}[Proof of \eqref{eq:CHa}]
Put
\begin{equation}
r:=2n+1,
\qquad
\alpha:=-2m-2,
\qquad
M:=m+n+1.
\label{eq:CH-params}
\end{equation}
Then $r-1-\alpha=2M$. Taking $h(z)=(1-z)^M$ in \eqref{eq:eigen-duality}, we have
\[
\T_{2M}h=h.
\]
Since $r$ is odd,
\begin{equation}
\coeff{z^r}(1-z)^M\Bser_{-2m-2}(z)=0.
\label{eq:CH-a-coeff}
\end{equation}
Expanding gives
\[
0=
\sum_{k=0}^{2n+1}
(-1)^{2n+1-k}
\binom{M}{2n+1-k}
\binom{-2m-2}{k}B_k.
\]
Using
\begin{equation}
(-1)^k\binom{-2m-2}{k}
=
\binom{2m+k+1}{k}
\label{eq:neg-binomial}
\end{equation}
and
\[
\binom{M}{2n+1-k}
=
\binom{m+n+1}{m-n+k},
\]
we obtain \eqref{eq:CHa}. The terms below $k=[n-m]_+$ vanish because of the first binomial coefficient, while the $k=2n+1$ term vanishes because $B_{2n+1}=0$.
\end{proof}

Thus \eqref{eq:CHa} is simply the statement that an odd coefficient of a self-dual expression must vanish.

\begin{proof}[Proof of \eqref{eq:CHb}]
We use the weighted form of the duality established in \Cref{prop:weighted-duality}. With the same parameters as in \eqref{eq:CH-params}, namely
\[
r=2n+1,\qquad \alpha=-2m-2,\qquad p=M=m+n+1,
\]
and with $c=2$, it gives
\begin{equation}
\coeff{z^{2n+1}}(1+z)^M\Bser_{-2m-2}(-z/2)
=
2^{-2n-1}\coeff{z^{2n+1}}(1-z^2)^M\Bser_{-2m-2}(z).
\label{eq:CHb-weighted-step}
\end{equation}
The factor $(1-z^2)^M$ is even, while $B_k=0$ for odd $k>1$. Hence only the $B_1$-term contributes to the coefficient on the right, and since
\[
(-2m-2)B_1=m+1,
\]
we obtain
\[
\coeff{z^{2n+1}}(1-z^2)^M\Bser_{-2m-2}(z)
=(m+1)(-1)^n\binom{M}{n}.
\]
Expanding the left-hand side of \eqref{eq:CHb-weighted-step} and using \eqref{eq:neg-binomial} gives exactly \eqref{eq:CHb}.
\end{proof}

Thus the two parts of Chang and Ha's Bernoulli theorem arise from the same M\"obius mechanism. Equation \eqref{eq:CHa} is an odd coefficient in the $+1$ eigenspace, whereas \eqref{eq:CHb} is the parity-degenerate $c=2$ member of the weighted family of \Cref{prop:weighted-duality}. This also explains structurally why the factor $2^{-k}$ in \eqref{eq:CHb} produces such a simple right-hand side.

\subsection{Kaneko and a Genocchi recurrence}

\begin{corollary}[Kaneko's recurrence \litref{Kaneko1995}{p.~192}]\label{cor:Kaneko}
For every integer $n\ge1$,
\begin{equation}
\sum_{j=0}^{n}
\binom{n+1}{j}(n+j+1)B_{n+j}=0.
\label{eq:Kaneko}
\end{equation}
\end{corollary}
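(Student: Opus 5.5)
We obtain Kaneko's recurrence as the master vanishing \eqref{eq:master-vanishing}, specialized to $F=\Bser_{-2}$. This is the same mechanism that gave \eqref{eq:CHa}.

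\textbf{Choice of parameters.} The factor $(n+j+1)$ in front of $B_{n+j}$ suggests taking $\alpha=-2$. By \eqref{eq:neg-binomial} with $m=0$,
\[
\binom{-2}{k}B_k=(-1)^k(k+1)B_k .
\]
Hence the coefficients $f_k$ of $\Bser_{-2}(-z)$ are exactly $(k+1)B_k$. Next, the binomial weights $\binom{n+1}{j}$ attached to $B_{n+j}$ are produced by a coefficient extraction at degree $r=2n+1$. Indeed, with $k=n+j$ we have
\[
\binom{n+1}{j}=\binom{n+1}{2n+1-k},
\]
so
\[
\sum_{j=0}^{n+1}\binom{n+1}{j}(n+j+1)B_{n+j}
=\coeff{z^{2n+1}}(1+z)^{n+1}\Bser_{-2}(-z).
\]
Replacing $z$ by $-z$ turns this into $-\coeff{z^{2n+1}}(1-z)^{n+1}\Bser_{-2}(z)$.

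\textbf{Applying the vanishing.} For $r=2n+1$ and $\alpha=-2$ the exponent $(r-1-\alpha)/2$ equals $n+1$. Since $\Bser_{-2}$ is M\"obius-covariant by \eqref{eq:covariance-B} and $r$ is odd, Corollary~\ref{cor:general-hierarchy} (equation \eqref{eq:master-vanishing}) shows that this coefficient is zero. Equivalently, one can take $h=(1-z)^{n+1}$, which satisfies $\T_{2n+2}h=h$, and apply Corollary~\ref{cor:eigen-vanish}.

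\textbf{Removing the extra term.} The full sum has one more term than Kaneko's, namely $j=n+1$, which is $(2n+2)B_{2n+1}$. Since $n\ge1$, we have $B_{2n+1}=0$, so this term drops out. What remains is exactly \eqref{eq:Kaneko}.

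The only delicate step is the bookkeeping. One must match the coefficient-extraction degree $r=2n+1$ with the pairing of $\binom{n+1}{j}$ against $B_{n+j}$ through the complement index, and track the overall sign $(-1)^{n}$ from $z\mapsto -z$. That sign is harmless because the identity is a vanishing statement.
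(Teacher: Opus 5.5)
Your proof is correct and is essentially the paper's argument. The paper obtains Kaneko's recurrence by setting $m=0$ in \eqref{eq:CHa}, whose proof is the same master-vanishing step with $r=2n+1$, $\alpha=-2m-2$, $h=(1-z)^{m+n+1}$, and the same disposal of the $B_{2n+1}$ term; you have simply inlined the $m=0$ case. One cosmetic slip: the sign produced by $z\mapsto -z$ at degree $2n+1$ is $(-1)^{2n+1}=-1$, as you wrote earlier, not $(-1)^n$ as your final paragraph says, and it is irrelevant for a vanishing statement anyway.
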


\begin{proof}
Set $m=0$ in \eqref{eq:CHa} and write $k=n+j$. The same specialization appears as Corollary~1(a) of Chang and Ha \cite[Corollary~1(a)]{ChangHa2001}.
\end{proof}

Chang and Ha give two Genocchi recurrences in \cite[Theorem~3]{ChangHa2001}. The one that follows directly from the Bernoulli formulas is their part~(b).

\begin{corollary}[Chang--Ha Genocchi recurrence \litref{ChangHa2001}{Theorem~3(b)}]\label{cor:Chang-Ha-Genocchi}
Let the Genocchi numbers be normalized by
\begin{equation}
G_k:=2(1-2^k)B_k.
\label{eq:Genocchi}
\end{equation}
Then, for integers $n\ge1$ and $m\ge0$,
\begin{equation}
\sum_{k=[n-m]_+}^{2n}
\binom{m+n+1}{m-n+k}
\binom{2m+k+1}{k}
\frac{G_k}{2^k}
=
(-1)^n\frac{m+1}{2^{2n}}
\binom{m+n+1}{n}.
\label{eq:Chang-Ha-Genocchi}
\end{equation}
\end{corollary}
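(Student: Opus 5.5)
The plan is to reduce \eqref{eq:Chang-Ha-Genocchi} to a linear combination of the two Chang--Ha Bernoulli identities \eqref{eq:CHa} and \eqref{eq:CHb} of Corollary~\ref{cor:Chang-Ha-Bernoulli}, which are already available. Under the normalization \eqref{eq:Genocchi},
\[
\frac{G_k}{2^k}=2\cdot\frac{B_k}{2^k}-2B_k
\qquad(k\ge0).
\]
The left-hand side of \eqref{eq:Chang-Ha-Genocchi} is a sum over $[n-m]_+\le k\le 2n$ of the fixed kernel $\binom{m+n+1}{m-n+k}\binom{2m+k+1}{k}$ against $G_k/2^k$. This is the same kernel and the same summation range as in \eqref{eq:CHa} and \eqref{eq:CHb}, so it splits termwise as
\[
2\sum_{k}\binom{m+n+1}{m-n+k}\binom{2m+k+1}{k}\frac{B_k}{2^k}
-2\sum_{k}\binom{m+n+1}{m-n+k}\binom{2m+k+1}{k}B_k .
\]

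Next, substitute the two known evaluations. By \eqref{eq:CHa}, the second sum vanishes. By \eqref{eq:CHb}, the first sum equals $(-1)^n\frac{m+1}{2^{2n+1}}\binom{m+n+1}{n}$. Doubling this gives exactly $(-1)^n\frac{m+1}{2^{2n}}\binom{m+n+1}{n}$, which is the claimed right-hand side.

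The only point needing care is that the summation ranges genuinely coincide, so that no boundary terms are created or lost. Both Bernoulli identities were derived as full coefficient extractions $\coeff{z^{2n+1}}$. Their lower limit $[n-m]_+$ comes from the vanishing of the first binomial coefficient, and this vanishing is independent of the sequence being summed. At the upper end, the $k=2n+1$ term was dropped in both identities because $B_{2n+1}=0$ for $n\ge1$, and correspondingly $G_{2n+1}=0$.

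The term $k=0$ is also harmless: $G_0=0$ is consistent with $2B_0/2^0-2B_0=0$. The linear identity therefore holds term by term over the common range, and no genuine obstacle is expected. Structurally, this shows that the Genocchi recurrence is the difference of the $c=2$ and $c=1$ members of the weighted family in Proposition~\ref{prop:weighted-duality}, applied to $F=\Bser_{-2m-2}$.
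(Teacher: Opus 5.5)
Your proposal is correct and is exactly the paper's argument: writing $G_k/2^k=2(B_k/2^k-B_k)$ makes the left-hand side twice the difference of the left-hand sides of \eqref{eq:CHb} and \eqref{eq:CHa}, and the stated value follows at once. Your extra checks on the common summation range, and your remark that the two ingredients are the $c=2$ and $c=1$ members of Proposition~\ref{prop:weighted-duality}, are accurate.
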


\begin{proof}
Since
\[
\frac{G_k}{2^k}=2\left(\frac{B_k}{2^k}-B_k\right),
\]
the left-hand side of \eqref{eq:Chang-Ha-Genocchi} is twice the difference between the left-hand sides of \eqref{eq:CHb} and \eqref{eq:CHa}. The result follows immediately.
\end{proof}

\subsection{Momiyama's two-parameter recurrence}

\begin{theorem}[Momiyama's recurrence \litref{Momiyama2001}{p.~285}]\label{thm:Momiyama}
For nonnegative integers $m,n$ with $m+n>0$,
\begin{equation}
\sum_{j=0}^{m}
\binom{m+1}{j}(n+j+1)B_{n+j}
+
(-1)^{m+n}
\sum_{k=0}^{n}
\binom{n+1}{k}(m+k+1)B_{m+k}
=0.
\label{eq:Momiyama}
\end{equation}
\end{theorem}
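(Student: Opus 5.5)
The plan is to read \eqref{eq:Momiyama} as a single instance of the two-parameter template \eqref{eq:two-param-family}. We specialize it to the Bernoulli series of weight $\alpha=-2$, with extraction degree $r=m+n+1$ and exponent $s=m+1$.

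\textbf{Choice of parameters.} Take $F=\Bser_{-2}$. Since $\binom{-2}{k}=(-1)^k(k+1)$, its coefficients are
\[
f_k=(-1)^k(k+1)B_k.
\]
This is exactly where the factors $(n+j+1)$ and $(m+k+1)$ come from. With $r=m+n+1$ we get $q=r-1-\alpha=m+n+2$. For $s=m+1$ this gives $q-s=n+1$, so the two binomial kernels in \eqref{eq:two-param-family} are $\binom{m+1}{r-k}$ and $\binom{n+1}{r-k}$. The identity then reads
\[
\coeff{z^r}\bigl((1-z)^{m+1}-(-1)^r(1-z)^{n+1}\bigr)\Bser_{-2}(z)=0,
\]
which is legitimate by Corollary~\ref{cor:eigen-vanish}, because $\T_q(1-z)^{m+1}=(1-z)^{n+1}$.

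\textbf{Expanding the two pieces.} Expand the first piece and substitute $f_k$. Each term is
\[
(-1)^{r-k}\binom{m+1}{r-k}(-1)^k(k+1)B_k=(-1)^r\binom{m+1}{k-n}(k+1)B_k .
\]
Putting $k=n+j$, the range is $0\le j\le m+1$, so this piece equals
\[
(-1)^r\bigl(S_1+E\bigr),\qquad S_1:=\sum_{j=0}^{m}\binom{m+1}{j}(n+j+1)B_{n+j},\qquad E:=(m+n+2)B_{m+n+1},
\]
where $E$ is the extra boundary term $j=m+1$. Symmetrically, the second piece equals $(-1)^r(S_2+E)$, with $S_2$ the second sum in \eqref{eq:Momiyama}.

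\textbf{Assembling.} The identity becomes $S_1+E-(-1)^r(S_2+E)=0$. Since $(-1)^r=-(-1)^{m+n}$, this is
\[
S_1+(-1)^{m+n}S_2+\bigl(1+(-1)^{m+n}\bigr)E=0.
\]

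\textbf{Main obstacle: the boundary term $E$.} The only delicate point is showing that $\bigl(1+(-1)^{m+n}\bigr)E=0$. If $m+n$ is odd, the prefactor $1+(-1)^{m+n}$ vanishes. If $m+n$ is even, then $m+n+1$ is odd and at least $3$, because $m+n>0$; hence $B_{m+n+1}=0$. This last case is exactly where the hypothesis $m+n>0$ enters, since it excludes the nonzero value $B_1$. In either case the boundary term drops out and \eqref{eq:Momiyama} follows. What remains is only routine bookkeeping of signs and summation ranges.
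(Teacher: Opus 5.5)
Your proof is correct. It rests on the same M\"obius duality for $\Bser_{-2}$ as the paper's proof, but organizes the argument differently.

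The paper works at extraction degree $N=m+n$ with a tailored test series $H(z)=\bigl((1+z)^{m+1}-1+(-1)^N((1+z)^{n+1}-1)\bigr)/z$. This $H$ is chosen so that $\coeff{z^N}H(z)\Bser_{-2}(-z)$ is exactly the left-hand side of \eqref{eq:Momiyama}, with no boundary term. Since $H$ is not an eigenvector, the paper applies the full duality \eqref{eq:main-duality}. It then rewrites the transformed test series as $(-1)^{N+1}H(-z)$ plus a correction carrying the factor $1+(-1)^N$, removes that correction with the classical recurrence \eqref{eq:classical-B-recurrence}, and concludes from $A_{m,n}=-A_{m,n}$.

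You instead move up to degree $r=N+1$. There Momiyama's two kernels form the member $s=m+1$ of the ready-made template \eqref{eq:two-param-family}, so only Corollary~\ref{cor:eigen-vanish} is needed. The price is the shared boundary term $(1+(-1)^N)(N+2)B_{N+1}$. Your Bernoulli-specific input then reduces to $B_{N+1}=0$ for even $N\ge2$, i.e.\ the sparse odd part \eqref{eq:parity-B}.

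What each route buys:
\begin{itemize}
\item Your version is shorter. It exhibits Momiyama's recurrence as an instance of the generic eigenspace family, valid for every covariant series of weight $-2$ up to that single boundary coefficient. It also shows plainly that $m+n>0$ is needed only to exclude $B_1$.
\item The paper's version instead showcases the arbitrary-test-series duality with a non-eigenvector test series.
\end{itemize}

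The two auxiliary inputs are in fact equivalent once covariance is assumed. For even $N$, the template with $s=0$ gives $\coeff{z^{N+1}}\bigl(1+(1-z)^{N+2}\bigr)\Bser_{-2}(z)=0$, and this reduces the paper's correction term to a multiple of $B_{N+1}$.
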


Momiyama proved this recurrence using the Volkenborn integral and then derived in \cite[Sec.~3]{Momiyama2001} the Kummer congruence
\begin{equation}
\frac{B_n}{n}\equiv
\frac{B_{n+p-1}}{n+p-1}
\pmod{p\mathbb Z_p},
\label{eq:Kummer}
\end{equation}
for every prime $p>5$ and every integer $n$ with $1<n<p-2$. We now derive \eqref{eq:Momiyama} from the M\"obius coefficient duality.

\begin{proof}
Set
\[
N:=m+n
\]
and observe that
\begin{equation}
\Bser_{-2}(-z)
=
\sum_{t\ge0}(t+1)B_tz^t.
\label{eq:Bminus2}
\end{equation}
Define
\begin{equation}
H(z)
:=
\frac{
(1+z)^{m+1}-1
+
(-1)^N\bigl((1+z)^{n+1}-1\bigr)
}{z}.
\label{eq:H}
\end{equation}
Then the left-hand side of \eqref{eq:Momiyama} is exactly
\begin{equation}
A_{m,n}:=\coeff{z^N}H(z)\Bser_{-2}(-z).
\label{eq:A}
\end{equation}

Since $N-1-(-2)=N+1$, the general duality transforms \eqref{eq:A} into
\begin{equation}
A_{m,n}=\coeff{z^N}h(z)\Bser_{-2}(z),
\label{eq:Atrans}
\end{equation}
where
\begin{equation}
h(z)
:=
\frac{
(1-z)^{n+1}
+
(-1)^N(1-z)^{m+1}
-
(1+(-1)^N)(1-z)^{N+2}
}{z}.
\label{eq:hMom}
\end{equation}
A direct rearrangement gives
\begin{equation}
h(z)
=
(-1)^{N+1}H(-z)
+
(1+(-1)^N)
\frac{1-(1-z)^{N+2}}{z}.
\label{eq:hMom-rearrange}
\end{equation}

The first term in \eqref{eq:hMom-rearrange} contributes $-A_{m,n}$, because
\[
\coeff{z^N}H(-z)\Bser_{-2}(z)
=(-1)^NA_{m,n}.
\]
It remains to evaluate
\[
\coeff{z^N}
\frac{1-(1-z)^{N+2}}{z}
\Bser_{-2}(z).
\]
Since
\[
\Bser_{-2}(z)
=
\sum_{k\ge0}(-1)^k(k+1)B_kz^k,
\]
this coefficient is
\[
(-1)^N
\sum_{k=0}^N
(k+1)\binom{N+2}{k+1}B_k.
\]
Using
\[
(k+1)\binom{N+2}{k+1}
=(N+2)\binom{N+1}{k},
\]
we obtain
\begin{equation}
(-1)^N(N+2)
\sum_{k=0}^N
\binom{N+1}{k}B_k
=0
\label{eq:classical-B-recurrence}
\end{equation}
by the classical Bernoulli recurrence \cite[Eq.~(2)]{Momiyama2001}. Hence
\[
A_{m,n}=-A_{m,n},
\]
and therefore $A_{m,n}=0$.
\end{proof}

Momiyama notes that \eqref{eq:Momiyama} includes Kaneko's recurrence and the usual Bernoulli recurrence as specializations and then uses it to derive \eqref{eq:Kummer}; see \cite[Sec.~3]{Momiyama2001}. Thus one obtains the chain
\begin{equation}
\text{M\"obius coefficient duality}
\Longrightarrow
\text{Momiyama recurrence}
\Longrightarrow
\text{Kummer congruence},
\label{eq:chain}
\end{equation}
where the second implication is Momiyama's argument.

\section{Reflection-symmetric Appell realizations}\label{sec:appell}

The binomial-series parameter $\alpha$ remains arbitrary in $\CC$ in this section. Parameters indexing a classical polynomial family are restricted only when required by that family's conventional definition.

\subsection{Appell sequences with reflection symmetry}

Reflection-symmetric Appell sequences and their reciprocal polynomials have been studied systematically; see Kellner \cite{Kellner2021Appell} and Bayad--Komatsu \cite{BayadKomatsu2017}. In particular, the quadratic substitution associated with the reflection symmetry is a familiar feature of that theory. The point of the next result is to show how the same reflection law interacts with the binomial formal-series transform used in the present paper and hence produces a M\"obius coefficient duality.

Let $\{P_n(x)\}_{n\ge0}$ be an Appell sequence, so that
\begin{equation}
P_n(x+y)
=
\sum_{k=0}^n
\binom nkP_k(x)y^{n-k}.
\label{eq:Appell}
\end{equation}
Assume in addition that for some constant $c$,
\begin{equation}
P_n(c-x)=(-1)^nP_n(x).
\label{eq:reflection}
\end{equation}
Define
\begin{equation}
\Pser_{\alpha,x}(z)
:=
\sum_{k\ge0}
\binom{\alpha}{k}P_k(x)z^k.
\label{eq:Pseries}
\end{equation}

\begin{theorem}\label{thm:appell-duality}
Under assumptions \eqref{eq:Appell} and \eqref{eq:reflection},
\begin{equation}
\Pser_{\alpha,x}
\!\left(-\frac{z}{1-z}\right)
=
(1-z)^{-\alpha}
\Pser_{\alpha,c-1-x}(z).
\label{eq:Appell-cov}
\end{equation}
Consequently,
\begin{equation}
\coeff{z^r}h(z)\Pser_{\alpha,x}(z)
=
\coeff{z^r}
h\!\left(\frac{z}{1+z}\right)
(1+z)^{r-1-\alpha}
\Pser_{\alpha,c-1-x}(-z).
\label{eq:Appell-duality}
\end{equation}
\end{theorem}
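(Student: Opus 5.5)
The plan is to prove the covariance law \eqref{eq:Appell-cov} first for nonnegative integral weights $\alpha=n$, and then extend it to all $\alpha\in\CC$ by polynomiality. The coefficient duality \eqref{eq:Appell-duality} will then follow from the Lagrange-inversion step in the proof of Theorem~\ref{thm:structural-duality}, with the two parameters $x$ and $c-1-x$ interchanged.

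\emph{Integral weights.} Let $\alpha=n$ be a nonnegative integer. By the Appell law \eqref{eq:Appell} with $y=1/z$,
\[
\Pser_{n,x}(z)=\sum_{k=0}^n\binom nkP_k(x)z^k=z^nP_n\!\left(x+\frac1z\right),
\]
which is an identity of polynomials in $z$. Put $w:=\sigma(z)=-z/(1-z)$. Then $1/w=1-1/z$ and $w^n=(-1)^nz^n(1-z)^{-n}$, so
\[
\Pser_{n,x}(w)=(-1)^nz^n(1-z)^{-n}P_n\!\left(x+1-\frac1z\right).
\]
The reflection law \eqref{eq:reflection} gives $P_n(x+1-1/z)=(-1)^nP_n\bigl((c-1-x)+1/z\bigr)$. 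The two signs cancel, and by the same Appell identity the result is $(1-z)^{-n}\Pser_{n,c-1-x}(z)$. This is \eqref{eq:Appell-cov} for $\alpha=n$.

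\emph{Extension to $\alpha\in\CC$.} This is the only point needing care. For fixed $r$, I would check that the coefficient of $z^r$ on each side of \eqref{eq:Appell-cov} is a polynomial in $\alpha$.
\begin{itemize}
\item On the left, only $k\le r$ contribute, since $\sigma(z)^k=(-1)^kz^k(1-z)^{-k}$ has order $k$. Each term is $\binom{\alpha}{k}P_k(x)$ times a number independent of $\alpha$.
\item On the right, the coefficients of $(1-z)^{-\alpha}$ are $\binom{\alpha+j-1}{j}$, and those of $\Pser_{\alpha,c-1-x}$ are $\binom{\alpha}{k}P_k(c-1-x)$. Both are polynomial in $\alpha$, and only finitely many terms enter the coefficient of $z^r$.
\end{itemize}
The two polynomials agree at every nonnegative integer $\alpha$, so they are identical. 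This is the same argument the paper uses for \eqref{eq:covariance-B}.

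\emph{Duality.} I would rerun the proof of (i)$\Rightarrow$(ii) in Theorem~\ref{thm:structural-duality} with
\[
H(z):=h\!\left(\frac z{1+z}\right)(1+z)^{-1-\alpha}\Pser_{\alpha,c-1-x}(-z).
\]
By \eqref{eq:Lagrange-form}, the right-hand side of \eqref{eq:Appell-duality} becomes
\[
\coeff{z^r}h(z)(1-z)^{\alpha}\Pser_{\alpha,c-1-x}(\sigma(z)).
\]
Now apply \eqref{eq:Appell-cov} with $x$ replaced by $c-1-x$. Since $c-1-(c-1-x)=x$, this gives $\Pser_{\alpha,c-1-x}(\sigma(z))=(1-z)^{-\alpha}\Pser_{\alpha,x}(z)$, and the expression reduces to $\coeff{z^r}h(z)\Pser_{\alpha,x}(z)$, which is the left-hand side.

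I expect no real obstacle beyond the polynomial-continuation bookkeeping in the second step. The key computation is the identity $1/\sigma(z)=1-1/z$, which turns the M\"obius substitution into the shift-and-reflect $x\mapsto c-1-x$.
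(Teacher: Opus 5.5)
Your proposal is correct and follows the paper's proof essentially step for step. Both arguments prove the integral case via $\Pser_{N,x}(z)=z^NP_N(x+1/z)$ together with the reflection law, extend to all $\alpha$ by coefficientwise polynomiality, and then obtain the duality from the Lagrange-inversion step of Theorem~\ref{thm:structural-duality}, with the covariance applied at $c-1-x$; you simply spell out the continuation and the parameter swap in more detail than the paper does.
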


\begin{proof}
It is enough first to prove \eqref{eq:Appell-cov} for $\alpha=N$ a nonnegative integer. By \eqref{eq:Appell},
\begin{equation}
\Pser_{N,x}(z)
=
z^NP_N\!\left(x+\frac1z\right).
\label{eq:reciprocal-Appell}
\end{equation}
Therefore
\[
\begin{aligned}
\Pser_{N,x}(\sigma(z))
&=
\left(-\frac{z}{1-z}\right)^N
P_N\!\left(x+1-\frac1z\right)\\
&=
\left(-\frac{z}{1-z}\right)^N
(-1)^N
P_N\!\left(c-1-x+\frac1z\right)\\
&=
(1-z)^{-N}
\Pser_{N,c-1-x}(z).
\end{aligned}
\]
Coefficientwise polynomiality in $\alpha$ gives the general case. Formula \eqref{eq:Appell-duality} follows from the same Lagrange-inversion argument as the covariant-series duality.
\end{proof}

For the ordinary Bernoulli and Euler polynomials one has $c=1$:
\begin{equation}
B_n(1-x)=(-1)^nB_n(x),
\qquad
E_n(1-x)=(-1)^nE_n(x).
\label{eq:BEreflection}
\end{equation}
Their Appell addition and reflection formulas are classical; see, for example, \cite{DLMF24}.

For Bernoulli polynomials, the theorem gives
\begin{equation}
\coeff{z^r}h(z)\Bser_{\alpha,x}(z)
=
\coeff{z^r}
h\!\left(\frac{z}{1+z}\right)
(1+z)^{r-1-\alpha}
\Bser_{\alpha,-x}(-z),
\label{eq:BernPoly-duality}
\end{equation}
where
\begin{equation}
\Bser_{\alpha,x}(z)
:=
\sum_{k\ge0}\binom{\alpha}{k}B_k(x)z^k.
\label{eq:BernPoly-series}
\end{equation}
At $x=0$, this recovers the Bernoulli-series realization of the general duality.

\subsection{Higher-order Bernoulli and Euler polynomials}

Let $\ell$ be a positive integer. The higher-order Bernoulli and Euler polynomials are defined by
\begin{equation}
\sum_{n\ge0}
B_n^{(\ell)}(x)\frac{t^n}{n!}
:=
\left(\frac{t}{e^t-1}\right)^\ell e^{xt},
\label{eq:higher-B}
\end{equation}
and
\begin{equation}
\sum_{n\ge0}
E_n^{(\ell)}(x)\frac{t^n}{n!}
:=
\left(\frac{2}{e^t+1}\right)^\ell e^{xt}.
\label{eq:higher-E}
\end{equation}
These standard higher-order analogues are discussed, for example, in \cite{DLMF24}. Both are Appell sequences, and their generating functions immediately imply
\begin{equation}
P_n^{(\ell)}(\ell-x)=(-1)^nP_n^{(\ell)}(x),
\label{eq:higher-reflection}
\end{equation}
where $P$ denotes either $B$ or $E$. Hence the preceding Appell theorem applies with $c=\ell$.

\begin{corollary}\label{cor:higher-order-duality}
Let $P_n^{(\ell)}(x)$ denote either the higher-order Bernoulli or higher-order Euler polynomials and define
\[
\Pser_{\alpha,x}^{(\ell)}(z)
:=
\sum_{k\ge0}
\binom{\alpha}{k}P_k^{(\ell)}(x)z^k.
\]
Then
\begin{equation}
\coeff{z^r}h(z)\Pser_{\alpha,x}^{(\ell)}(z)
=
\coeff{z^r}
h\!\left(\frac{z}{1+z}\right)
(1+z)^{r-1-\alpha}
\Pser_{\alpha,\ell-1-x}^{(\ell)}(-z).
\label{eq:higher-duality}
\end{equation}
\end{corollary}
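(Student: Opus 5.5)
The plan is to read off \eqref{eq:higher-duality} as the special case of \Cref{thm:appell-duality} in which $P_n=P_n^{(\ell)}$ and $c=\ell$. That leaves only two hypotheses to verify for each of the two families: the Appell addition law \eqref{eq:Appell} and the reflection law \eqref{eq:reflection} with $c=\ell$. Once both hold, \eqref{eq:Appell-duality} with $c-1-x=\ell-1-x$ is literally \eqref{eq:higher-duality}.

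\textbf{Appell property.} Write the generating functions as $g_\ell(t)e^{xt}$, where
\[
g_\ell(t)=\left(\frac{t}{e^t-1}\right)^\ell
\quad\text{or}\quad
g_\ell(t)=\left(\frac{2}{e^t+1}\right)^\ell .
\]
Each $g_\ell$ is a formal power series in $t$. Then $g_\ell(t)e^{(x+y)t}=g_\ell(t)e^{xt}\cdot e^{yt}$, and comparing coefficients of $t^n/n!$ gives \eqref{eq:Appell} immediately.

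\textbf{Reflection.} I substitute $x\mapsto \ell-x$ and $t\mapsto -t$ in the generating function. For Bernoulli,
\[
\frac{-t}{e^{-t}-1}=\frac{te^t}{e^t-1},
\]
so $g_\ell(-t)=e^{\ell t}g_\ell(t)$. For Euler,
\[
\frac{2}{e^{-t}+1}=\frac{2e^t}{e^t+1},
\]
so again $g_\ell(-t)=e^{\ell t}g_\ell(t)$. Hence
\[
g_\ell(-t)e^{(\ell-x)(-t)}=g_\ell(t)e^{xt}.
\]
Reading this as $\sum_n P_n^{(\ell)}(\ell-x)(-t)^n/n!=\sum_n P_n^{(\ell)}(x)t^n/n!$ and comparing coefficients gives $P_n^{(\ell)}(\ell-x)=(-1)^nP_n^{(\ell)}(x)$. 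This is \eqref{eq:higher-reflection}, i.e.\ \eqref{eq:reflection} with $c=\ell$.

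\textbf{Conclusion and obstacle.} With both hypotheses in place, invoking \Cref{thm:appell-duality} completes the proof. No step is genuinely hard; the only points needing care are the sign bookkeeping in $g_\ell(-t)=e^{\ell t}g_\ell(t)$ and checking that the exponent $\ell$ pairs with the shift in $x$. The covariance-to-coefficient passage, including the extension to complex $\alpha$, is already contained in \Cref{thm:appell-duality}, which in turn rests on the Lagrange-inversion argument of Theorem~\ref{thm:structural-duality}.
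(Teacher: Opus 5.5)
Your proposal is correct and follows the paper's route: the paper also observes that both higher-order families are Appell sequences, reads the reflection law $P_n^{(\ell)}(\ell-x)=(-1)^nP_n^{(\ell)}(x)$ off their generating functions, and then applies \Cref{thm:appell-duality} with $c=\ell$. Your explicit computation $g_\ell(-t)=e^{\ell t}g_\ell(t)$ simply writes out the step that the paper describes as immediate.
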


In particular, the point
\begin{equation}
x=\frac{\ell-1}{2}
\label{eq:fixedpoint}
\end{equation}
is fixed by $x\mapsto\ell-1-x$. Thus
\begin{equation}
\sum_{k\ge0}
\binom{\alpha}{k}
P_k^{(\ell)}\!\left(\frac{\ell-1}{2}\right)z^k
\label{eq:higher-fixed-series}
\end{equation}
satisfies exactly the same self-duality as the ordinary Bernoulli series. Consequently every eigenspace recurrence of \Cref{eq:hierarchy,eq:two-param-family} immediately produces corresponding recurrences for the values
\[
B_k^{(\ell)}\!\left(\frac{\ell-1}{2}\right),
\qquad
E_k^{(\ell)}\!\left(\frac{\ell-1}{2}\right).
\]

Reciprocal relations and shortened recurrences for generalized and higher-order Bernoulli and Euler families are already well developed; see Agoh \cite{Agoh2017} and Wang--Chu \cite{WangChu2018}. Accordingly, the significance of \eqref{eq:higher-duality} is not the first occurrence of higher-order reciprocal identities, but the fact that the general M\"obius coefficient duality \eqref{eq:main-duality} persists uniformly for these Appell families.

\subsection{An affine Euler duality}

The Euler numbers in the convention used by Chang and Ha are
\begin{equation}
E_k:=2^kE_k(1/2).
\label{eq:EulerConvention}
\end{equation}
Thus it is natural to consider
\begin{equation}
\Eser_\alpha(z)
:=
\sum_{k\ge0}
\binom{\alpha}{k}
E_k(1/2)z^k.
\label{eq:Eseries}
\end{equation}
Because
\[
E_k(1-x)=(-1)^kE_k(x),
\]
one has
\begin{equation}
E_{2j+1}(1/2)=0,
\label{eq:Euler-even}
\end{equation}
so $\Eser_\alpha(z)$ is even.

The Euler difference equation
\begin{equation}
E_k(x+1)+E_k(x)=2x^k
\label{eq:Euler-difference}
\end{equation}
gives, at $x=-1/2$,
\begin{equation}
E_k(-1/2)+E_k(1/2)
=2(-1/2)^k.
\label{eq:Euler-minus-half}
\end{equation}
These identities are classical; see \cite{DLMF24}.

Applying the Appell duality with $x=1/2$, followed by \eqref{eq:Euler-minus-half}, gives the affine covariance
\begin{equation}
\Eser_\alpha
\!\left(-\frac{z}{1-z}\right)
=
(1-z)^{-\alpha}
\left\{
2(1-z/2)^\alpha-\Eser_\alpha(z)
\right\}.
\label{eq:Euler-affine-cov}
\end{equation}
The same Lagrange-inversion argument therefore gives the following.

\begin{proposition}[Affine Euler duality]\label{prop:Euler-affine-duality}
For $q=r-1-\alpha$,
\begin{equation}
\begin{aligned}
&\coeff{z^r}h(z)\Eser_\alpha(z)\\
&\quad+
\coeff{z^r}
h\!\left(\frac{z}{1+z}\right)
(1+z)^q
\Eser_\alpha(-z)
=
2\coeff{z^r}h(z)(1-z/2)^\alpha.
\end{aligned}
\label{eq:Euler-affine-duality}
\end{equation}
\end{proposition}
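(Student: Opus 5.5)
The plan is to repeat the Lagrange-inversion step from the proof of Theorem~\ref{thm:structural-duality}, now using the affine covariance \eqref{eq:Euler-affine-cov} in place of \eqref{eq:Fcov}. We work with the second term on the left of \eqref{eq:Euler-affine-duality} and invoke \eqref{eq:Lagrange-form} with
\[
H(z):=h\!\left(\frac{z}{1+z}\right)(1+z)^{-1-\alpha}\Eser_\alpha(-z),
\]
so that $H(z)(1+z)^r$ is exactly the series $h(z/(1+z))(1+z)^{q}\Eser_\alpha(-z)$, since $q=r-1-\alpha$.

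Under $z\mapsto g(z)=z/(1-z)$ we have:
\begin{itemize}
\item $h(g/(1+g))=h(z)$;
\item $(1+g)^{-1-\alpha}=(1-z)^{1+\alpha}$;
\item $\Eser_\alpha(-g(z))=\Eser_\alpha(\sigma(z))$;
\item $zg'(z)/g(z)=(1-z)^{-1}$.
\end{itemize}
Hence the second term equals
\[
\coeff{z^r}h(z)(1-z)^{\alpha}\Eser_\alpha(\sigma(z)).
\]
Substituting \eqref{eq:Euler-affine-cov}, the factor $(1-z)^\alpha(1-z)^{-\alpha}$ cancels. What remains is
\[
\coeff{z^r}h(z)\bigl\{2(1-z/2)^\alpha-\Eser_\alpha(z)\bigr\}.
\]
Moving the $\Eser_\alpha(z)$ term to the left side gives \eqref{eq:Euler-affine-duality}.

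The only substantive input is \eqref{eq:Euler-affine-cov} itself, and this is where I expect the care to be needed. If it has to be verified, apply \eqref{eq:Appell-cov} with $c=1$ and $x=1/2$. Then $c-1-x=-1/2$, which gives
\[
\Eser_\alpha(\sigma(z))=(1-z)^{-\alpha}\sum_k\binom{\alpha}{k}E_k(-1/2)z^k.
\]
By \eqref{eq:Euler-minus-half}, this inner sum is
\[
\sum_k\binom{\alpha}{k}\bigl(2(-1/2)^k-E_k(1/2)\bigr)z^k=2(1-z/2)^\alpha-\Eser_\alpha(z).
\]

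Two points remain to be checked. First, both sides of \eqref{eq:Lagrange-form} are coefficientwise polynomial in $\alpha$, so complex $\alpha$ causes no issue. Second, the sign conventions in $\sigma(-z)=z/(1+z)$ must be applied consistently when matching the shape of \eqref{eq:Lagrange-form}.
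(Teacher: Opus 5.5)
Your proof is correct and follows the paper's route. The paper also obtains \eqref{eq:Euler-affine-cov} from \eqref{eq:Appell-cov} with $c=1$, $x=1/2$ together with \eqref{eq:Euler-minus-half}, then reuses the Lagrange-inversion step \eqref{eq:Lagrange-form} from Theorem~\ref{thm:structural-duality}; your choice $H(z)=h\!\left(\frac{z}{1+z}\right)(1+z)^{-1-\alpha}\Eser_\alpha(-z)$ carries out that step explicitly, and the computation checks.
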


This affine duality immediately yields the Euler-number theorem of Chang and Ha.

\begin{corollary}[Chang--Ha Euler recurrence \litref{ChangHa2001}{Theorem~2}]\label{cor:Chang-Ha-Euler}
For $n\ge1$ and $m\ge0$,
\begin{equation}
\sum_{k=[n-m]_+}^{2n}
\binom{m+n}{m-n+k}
\binom{2m+k}{k}
\frac{E_k}{2^k}
=
\left(-\frac14\right)^n
\binom{m+n}{n}.
\label{eq:CH-Euler}
\end{equation}
\end{corollary}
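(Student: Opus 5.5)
The plan is to mirror the proof of \eqref{eq:CHa}, using the affine Euler duality of Proposition~\ref{prop:Euler-affine-duality} in place of the pure coefficient duality. Put
\[
r:=2n,\qquad \alpha:=-2m-1,\qquad M:=m+n,
\]
so that $q=r-1-\alpha=2M$. Since $E_k/2^k=E_k(1/2)$ by \eqref{eq:EulerConvention}, and since
\[
\binom{2m+k}{k}=(-1)^k\binom{-2m-1}{k},\qquad \binom{m+n}{m-n+k}=\binom{M}{2n-k},
\]
the left-hand side of \eqref{eq:CH-Euler} should equal $\coeff{z^{2n}}(1+z)^M\Eser_{-2m-1}(-z)$. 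The lower limit $[n-m]_+$ is automatic from the binomial coefficient. Because $\Eser_\alpha$ is even by \eqref{eq:Euler-even}, we have $\Eser_{-2m-1}(-z)=\Eser_{-2m-1}(z)$.

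Next I would apply \eqref{eq:Euler-affine-duality} with $h(z)=(1-z)^M$. Then
\[
h\!\left(\frac{z}{1+z}\right)(1+z)^{2M}=(1+z)^M,
\]
so the duality reads
\[
\coeff{z^{2n}}(1-z)^M\Eser(z)+\coeff{z^{2n}}(1+z)^M\Eser(-z)=2\coeff{z^{2n}}(1-z)^M(1-z/2)^{-2m-1},
\]
where $\Eser:=\Eser_{-2m-1}$. The series $\Eser$ is even and $2n$ is even, so only the even parts of $(1\mp z)^M$ contribute, and these coincide. Hence the two terms on the left are equal, and the left-hand side of \eqref{eq:CH-Euler} equals $\coeff{z^{2n}}(1-z)^{M}(1-z/2)^{-2m-1}$.

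The remaining step is to evaluate this elementary coefficient, and it is the only nontrivial computation. I would use the parity coordinate $u=z/(2-z)$, with
\[
z=\frac{2u}{1+u},\qquad 1-\frac z2=\frac{1}{1+u},\qquad 1-u^2=\frac{1-z}{(1-z/2)^2}.
\]
Writing $(1-z)^M(1-z/2)^{-2m-1}=(1-u^2)^M(1-z/2)^{2n-1}$, extract the coefficient as a formal residue and change variables using $dz=2\,du/(1+u)^2$. The powers of $1+u$ then cancel exactly: $(1+u)^{-(2n-1)}\cdot(1+u)^{2n+1}\cdot(1+u)^{-2}=1$. What remains is
\[
4^{-n}\coeff{u^{2n}}(1-u^2)^M=\left(-\frac14\right)^n\binom{m+n}{n},
\]
which is the right-hand side of \eqref{eq:CH-Euler}. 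A sanity check with $n=1$, $m=0$ gives $-1/4$ on both sides.

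The points to verify carefully are the bookkeeping of the powers of $1+u$ in the change of variables (the formal residue change of variables is legitimate because $u$ has order one) and the parity argument for the equality of the two left-hand terms.
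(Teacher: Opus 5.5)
Your proposal is correct and matches the paper's proof step for step. It uses the same parameters $r=2n$, $\alpha=-2m-1$, $M=m+n$, and the same test series $h(z)=(1-z)^M$ in the affine Euler duality, with the same parity argument equating the two left-hand terms. It then applies the same substitution $z=2u/(1+u)$ to reduce the remaining coefficient to $4^{-n}\coeff{u^{2n}}(1-u^2)^{m+n}$. The only cosmetic difference is that you track the powers of $1+u$ by hand in the residue change of variables, whereas the paper packages this step as the coefficient-substitution formula \eqref{eq:coeff-substitution}.
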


\begin{proof}
Set
\[
r:=2n,
\qquad
\alpha:=-2m-1,
\qquad
M:=m+n,
\]
so that $q=2M$. Take
\[
h(z)=(1-z)^M.
\]
Then the two coefficients on the left of \eqref{eq:Euler-affine-duality} are equal: $\Eser_\alpha$ is even, $r$ is even, and the dual transform replaces $(1-z)^M$ by $(1+z)^M$. Consequently
\begin{equation}
\coeff{z^{2n}}(1+z)^M\Eser_\alpha(-z)
=
\coeff{z^{2n}}
(1-z)^M(1-z/2)^{-2m-1}.
\label{eq:Euler-step}
\end{equation}
The left-hand side of \eqref{eq:Euler-step} is exactly the sum in \eqref{eq:CH-Euler}, since
\[
(-1)^k\binom{-2m-1}{k}
=
\binom{2m+k}{k}.
\]
It remains to evaluate the right-hand side. We claim that
\begin{equation}
\coeff{z^{2n}}
(1-z)^{m+n}(1-z/2)^{-2m-1}
=
\left(-\frac14\right)^n
\binom{m+n}{n}.
\label{eq:Euler-coeff-eval}
\end{equation}
To see this, make the coefficient substitution
\[
z=\frac{2u}{1+u}.
\]
Lagrange inversion, or equivalently a residue change of variables, gives
\begin{equation}
\coeff{z^r}f(z)
=
2^{-r}
\coeff{u^r}(1+u)^{r-1}
f\!\left(\frac{2u}{1+u}\right).
\label{eq:coeff-substitution}
\end{equation}
For $r=2n$, the right side of \eqref{eq:Euler-coeff-eval} becomes
\[
4^{-n}
\coeff{u^{2n}}
(1-u)^{m+n}(1+u)^{m+n}
=
4^{-n}\coeff{u^{2n}}(1-u^2)^{m+n},
\]
which is
\[
\left(-\frac14\right)^n\binom{m+n}{n}.
\]
\end{proof}

Thus the Bernoulli and Euler recurrences of Chang and Ha are two manifestations of the same M\"obius mechanism: the Bernoulli series is exactly covariant, while the centered Euler series is affinely covariant.

\section{Enumerative realizations}\label{sec:enumerative}

\subsection{Colored matchings and a rescaled M\"obius duality}\label{sec:colored-matchings}

For $0\le j\le \lfloor n/2\rfloor$, let
\begin{equation}
m_{n,j}:=\frac{n!}{(n-2j)!j!2^j}
\label{eq:matching-number}
\end{equation}
denote the number of $j$-edge matchings of the complete graph $K_n$, and define the matching enumerator
\begin{equation}
M_n(\lambda)
:=
\sum_{j=0}^{\lfloor n/2\rfloor}m_{n,j}\lambda^j.
\label{eq:Mnlambda}
\end{equation}
For a nonnegative integer $\lambda$, $M_n(\lambda)$ counts matchings of $K_n$ in which every selected edge receives one of $\lambda$ colors, independently of the other edges. Equivalently, it counts involutions of $[n]$ with $\lambda$ colors available for each transposition. In particular, $M_n(1)$ is the ordinary involution number. The exponential formula gives
\begin{equation}
\sum_{n\ge0}M_n(\lambda)\frac{t^n}{n!}
=
\exp\!\left(t+\frac{\lambda t^2}{2}\right).
\label{eq:M-egf}
\end{equation}
The connection between complete-graph matchings and Hermite polynomials is classical; see, for example, Godsil \cite{Godsil1981Duality}.

The occurrence of four colors in the specialization considered initially is forced by the fixed point of the Appell parameter involution. Indeed, introduce the Appell sequence $\{P_n^{(\lambda)}(x)\}$ by
\begin{equation}
\sum_{n\ge0}P_n^{(\lambda)}(x)\frac{t^n}{n!}
:=
\exp\!\left(xt+\frac{\lambda t^2}{8}\right).
\label{eq:matching-Appell-egf}
\end{equation}
The factor independent of $x$ is even, so
\begin{equation}
P_n^{(\lambda)}(-x)=(-1)^nP_n^{(\lambda)}(x).
\label{eq:matching-Appell-reflection}
\end{equation}
The reflection parameter in \Cref{thm:appell-duality} is therefore $c=0$, and its induced transformation $x\mapsto-1-x$ has the unique fixed point $x=-1/2$. A direct comparison of \eqref{eq:M-egf} and \eqref{eq:matching-Appell-egf} gives
\begin{equation}
M_n(\lambda)=(-2)^nP_n^{(\lambda)}(-1/2).
\label{eq:M-Appell-relation}
\end{equation}
For $\lambda=4$, $P_n^{(4)}(x)$ is the usual involution polynomial
\[
K_n(x):=\sum_{\pi\in\operatorname{Inv}(n)}x^{\operatorname{fix}(\pi)},
\]
so that
\begin{equation}
M_n(4)=(-2)^nK_n(-1/2)
=
\sum_{\pi\in\operatorname{Inv}(n)}4^{c_2(\pi)},
\label{eq:four-colored-specialization}
\end{equation}
where $c_2(\pi)$ is the number of transpositions of $\pi$. Thus the previously observed sequence
\[
1,1,5,13,73,281,1741,\ldots
\]
is simply the specialization $M_n(4)$ of the polynomial family \eqref{eq:Mnlambda}.

Define the binomial matching series
\begin{equation}
\Mser_{\alpha,\lambda}(z)
:=
\sum_{k\ge0}\binom{\alpha}{k}M_k(\lambda)z^k.
\label{eq:Mseries}
\end{equation}
The Appell fixed point produces a rescaled M\"obius covariance and hence a second coefficient duality.

\begin{proposition}[Colored-matching duality]\label{prop:colored-matching-duality}
For all $\alpha,\lambda\in\CC$,
\begin{equation}
\Mser_{\alpha,\lambda}\!\left(-\frac{z}{1+2z}\right)
=
(1+2z)^{-\alpha}\Mser_{\alpha,\lambda}(z).
\label{eq:M-covariance}
\end{equation}
Consequently, for every formal power series $h(z)$ and every $r\ge0$,
\begin{equation}
\coeff{z^r}h(z)\Mser_{\alpha,\lambda}(z)
=
\coeff{z^r}
h\!\left(\frac{z}{1-2z}\right)
(1-2z)^{r-1-\alpha}
\Mser_{\alpha,\lambda}(-z).
\label{eq:M-duality}
\end{equation}
\end{proposition}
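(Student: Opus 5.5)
The plan is to reduce everything to \Cref{thm:appell-duality} and \Cref{thm:structural-duality} by a dilation $z\mapsto -2z$.

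First, rewrite the matching series through the Appell family \eqref{eq:matching-Appell-egf}. By \eqref{eq:M-Appell-relation}, $M_k(\lambda)=(-2)^kP_k^{(\lambda)}(-1/2)$. Hence
\[
\Mser_{\alpha,\lambda}(z)=\sum_{k\ge0}\binom{\alpha}{k}P_k^{(\lambda)}(-1/2)(-2z)^k=\Pser_{\alpha,-1/2}(-2z),
\]
where $\Pser$ is built from $P^{(\lambda)}$ as in \eqref{eq:Pseries}. This sequence is Appell with reflection constant $c=0$ by \eqref{eq:matching-Appell-reflection}, so \eqref{eq:Appell-cov} applies with $c-1-x=-1-x$. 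At the fixed point $x=-1/2$ it gives
\[
\Pser_{\alpha,-1/2}(\sigma(w))=(1-w)^{-\alpha}\Pser_{\alpha,-1/2}(w).
\]
That is, $F(w):=\Pser_{\alpha,-1/2}(w)$ is M\"obius-covariant of weight $\alpha$ in the sense of \Cref{def:covariance}. There is no obstacle in the dependence on $\lambda$, since it enters only through the Appell sequence, and the identity holds for all complex $\lambda$ and $\alpha$ coefficientwise.

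Second, transport the covariance through the dilation $w=-2z$. Compute $\sigma(-2z)=2z/(1+2z)=-2\cdot\bigl(-z/(1+2z)\bigr)$. Therefore
\[
\Mser_{\alpha,\lambda}\!\left(-\frac{z}{1+2z}\right)=F(\sigma(-2z))=(1+2z)^{-\alpha}F(-2z)=(1+2z)^{-\alpha}\Mser_{\alpha,\lambda}(z),
\]
which is \eqref{eq:M-covariance}.

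Third, derive the duality \eqref{eq:M-duality}. I would not redo Lagrange inversion. Instead apply \eqref{eq:main-duality} (valid for $F$ by \Cref{thm:structural-duality}) to the test series $\tilde h(w):=h(-w/2)$ and rescale coefficients. Since $\Mser_{\alpha,\lambda}(z)=F(-2z)$, we have
\[
\coeff{z^r}h(z)\Mser_{\alpha,\lambda}(z)=(-2)^r\coeff{w^r}\tilde h(w)F(w).
\]
By \eqref{eq:main-duality} this equals
\[
(-2)^r\coeff{w^r}\tilde h\!\left(\frac{w}{1+w}\right)(1+w)^{r-1-\alpha}F(-w).
\]
Now substitute $w=-2z$, which turns $(-2)^r\coeff{w^r}$ back into $\coeff{z^r}$. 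The pieces transform as follows:
\begin{itemize}
\item $w/(1+w)=-2z/(1-2z)$, so $\tilde h(w/(1+w))=h(z/(1-2z))$;
\item $(1+w)=1-2z$;
\item $F(-w)=F(2z)=\Mser_{\alpha,\lambda}(-z)$.
\end{itemize}
This yields \eqref{eq:M-duality}.

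The only step needing care is the sign and scale bookkeeping in this rescaling. That is where I expect errors. In particular I would verify that $(1-2z)$, not $(1+2z)$, appears in \eqref{eq:M-duality}, and check the case $h=1$, $r=1$ directly against $M_0=1$, $M_1=1$.
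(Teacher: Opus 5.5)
Your proof is correct and follows the paper's argument. You write $\Mser_{\alpha,\lambda}(z)=\mathcal P^{(\lambda)}_{\alpha,-1/2}(-2z)$, apply the Appell covariance at the fixed point $x=-1/2$, and transport both the covariance and the coefficient duality through the dilation $w=-2z$. The paper rescales \eqref{eq:Appell-duality} at $x=-1/2$, which is exactly the instance of \eqref{eq:main-duality} you invoke, and your sign and scale bookkeeping checks out, including the factor $(1-2z)$ and the case $h=1$, $r=1$.
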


\begin{proof}
Let
\[
\mathcal P^{(\lambda)}_{\alpha,x}(z)
:=
\sum_{k\ge0}\binom{\alpha}{k}P_k^{(\lambda)}(x)z^k.
\]
By \eqref{eq:M-Appell-relation},
\begin{equation}
\Mser_{\alpha,\lambda}(z)
=
\mathcal P^{(\lambda)}_{\alpha,-1/2}(-2z).
\label{eq:M-P-scaling}
\end{equation}
At the fixed point $x=-1/2$, \eqref{eq:Appell-cov} becomes
\[
\mathcal P^{(\lambda)}_{\alpha,-1/2}\!\left(-\frac{w}{1-w}\right)
=
(1-w)^{-\alpha}\mathcal P^{(\lambda)}_{\alpha,-1/2}(w).
\]
Putting $w=-2z$ and using \eqref{eq:M-P-scaling} gives \eqref{eq:M-covariance}. Rescaling \eqref{eq:Appell-duality} in the same way gives \eqref{eq:M-duality}.
\end{proof}

The series \eqref{eq:Mseries} also has the explicit expansion
\begin{align}
\Mser_{\alpha,\lambda}(z)
&=
(1+z)^\alpha
\sum_{j\ge0}
\frac{\alpha^{\underline{2j}}}{j!}
\left(\frac{\lambda z^2}{2(1+z)^2}\right)^j
\label{eq:M-series-explicit}\\
&=
(1+z)^\alpha\,
{}_2F_0\!\left(
-\frac{\alpha}{2},\frac{1-\alpha}{2};-;
2\lambda\left(\frac{z}{1+z}\right)^2
\right),
\nonumber
\end{align}
where $\alpha^{\underline{j}}:=\alpha(\alpha-1)\cdots(\alpha-j+1)$. In this form the covariance is transparent: the rational coordinate
\begin{equation}
u:=\frac{z}{1+z}
\label{eq:M-parity-coordinate}
\end{equation}
changes sign under
\begin{equation}
\tau(z):=-\frac{z}{1+2z},
\qquad
\frac{\tau(z)}{1+\tau(z)}=-\frac{z}{1+z}.
\label{eq:M-tau}
\end{equation}
Thus the matching duality has the same parity mechanism as the Bernoulli duality, but in a different M\"obius coordinate.

More explicitly, put $\rho:=r-1-\alpha$ and define
\begin{equation}
(\mathscr S_\rho h)(z)
:=
(1+2z)^\rho h\!\left(-\frac{z}{1+2z}\right).
\label{eq:S-operator}
\end{equation}
Then $\mathscr S_\rho^2=1$, and \eqref{eq:M-duality} is equivalent to
\begin{equation}
\coeff{z^r}h(z)\Mser_{\alpha,\lambda}(z)
=
(-1)^r
\coeff{z^r}(\mathscr S_\rho h)(z)\Mser_{\alpha,\lambda}(z).
\label{eq:M-eigenspace-duality}
\end{equation}
If
\begin{equation}
h(z)=(1+2z)^{\rho/2}
g\!\left(\frac{z}{1+z}\right),
\label{eq:M-diagonal-form}
\end{equation}
then
\[
(\mathscr S_\rho h)(z)
=
(1+2z)^{\rho/2}
g\!\left(-\frac{z}{1+z}\right).
\]
Hence $\mathscr S_\rho$ is again conjugate to ordinary parity.

\begin{corollary}[Colored-matching recurrence]\label{cor:colored-matching}
Let $r$ be odd and $\alpha,\lambda\in\CC$. Then
\begin{equation}
\sum_{k=0}^r
2^{r-k}
\binom{(r-1-\alpha)/2}{r-k}
\binom{\alpha}{k}
M_k(\lambda)
=0.
\label{eq:colored-matching-general}
\end{equation}
In particular, for $m,n\ge0$,
\begin{equation}
\sum_{k=[n-m]_+}^{2n+1}
(-1)^k2^{2n+1-k}
\binom{m+n+1}{m-n+k}
\binom{2m+k+1}{k}
M_k(\lambda)
=0.
\label{eq:colored-matching-mn}
\end{equation}
\end{corollary}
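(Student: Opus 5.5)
The plan is to run the eigenspace argument of Corollary~\ref{cor:general-hierarchy} in the rescaled coordinate of Proposition~\ref{prop:colored-matching-duality}. Put $\rho:=r-1-\alpha$ and take the test series $h(z)=(1+2z)^{\rho/2}$, which is the diagonal form \eqref{eq:M-diagonal-form} with $g\equiv1$. Then $\mathscr S_\rho h=h$. Indeed,
\[
1+2\bigl(-z/(1+2z)\bigr)=(1+2z)^{-1},
\]
so
\[
(1+2z)^\rho\bigl((1+2z)^{-1}\bigr)^{\rho/2}=(1+2z)^{\rho/2}.
\]
By \eqref{eq:M-eigenspace-duality}, for odd $r$ this gives
\[
\coeff{z^r}h\,\Mser_{\alpha,\lambda}=-\coeff{z^r}h\,\Mser_{\alpha,\lambda},
\]
and hence $\coeff{z^r}(1+2z)^{\rho/2}\Mser_{\alpha,\lambda}(z)=0$.

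Expanding this coefficient, we have $\coeff{z^{r-k}}(1+2z)^{\rho/2}=2^{r-k}\binom{\rho/2}{r-k}$, and the $k$-th coefficient of $\Mser_{\alpha,\lambda}$ is $\binom{\alpha}{k}M_k(\lambda)$ by \eqref{eq:Mseries}. The sum of products is exactly \eqref{eq:colored-matching-general}. One point to check is that \eqref{eq:M-eigenspace-duality} really follows from \eqref{eq:M-duality} by replacing $z\mapsto -z$ on the right. This is the same manipulation as in Proposition~\ref{prop:eigenspace-duality}, and I would simply cite it. The sign bookkeeping is consistent: $h(z/(1-2z))(1-2z)^\rho$ becomes, at $-z$, $(1+2z)^\rho h(-z/(1+2z))$, and the coefficient picks up $(-1)^r$.

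For \eqref{eq:colored-matching-mn}, specialize as in the Chang--Ha proof \eqref{eq:CH-params}: $r=2n+1$, $\alpha=-2m-2$, so that $(r-1-\alpha)/2=M:=m+n+1$. Then \eqref{eq:neg-binomial} converts $\binom{-2m-2}{k}$ to $(-1)^k\binom{2m+k+1}{k}$, and $\binom{M}{2n+1-k}=\binom{m+n+1}{m-n+k}$. This produces the displayed summand with the factor $(-1)^k2^{2n+1-k}$. The terms with $k<n-m$ vanish because the binomial coefficient $\binom{m+n+1}{m-n+k}$ has negative lower index, which justifies the lower limit $[n-m]_+$.

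Unlike the Bernoulli case, the $k=2n+1$ term is not removed by sparsity, and the statement keeps it, so no further argument is needed there. The only real obstacle is the verification $\mathscr S_\rho h=h$ and the correct sign convention in \eqref{eq:M-eigenspace-duality}. Everything else is routine coefficient extraction.
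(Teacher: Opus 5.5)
Your proposal is correct and follows essentially the same route as the paper. The paper's proof likewise uses that $(1+2z)^{(r-1-\alpha)/2}$ is a fixed vector of $\mathscr S_{r-1-\alpha}$, so \eqref{eq:M-eigenspace-duality} forces its odd-degree coefficient against $\Mser_{\alpha,\lambda}$ to vanish; expanding that coefficient gives \eqref{eq:colored-matching-general}, and specializing $r=2n+1$, $\alpha=-2m-2$ with the negative-binomial identity gives \eqref{eq:colored-matching-mn}.
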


\begin{proof}
For
\[
p:=\frac{r-1-\alpha}{2},
\]
the function $(1+2z)^p$ belongs to the $+1$ eigenspace of $\mathscr S_{r-1-\alpha}$. Since $r$ is odd, \eqref{eq:M-eigenspace-duality} gives
\[
\coeff{z^r}(1+2z)^p\Mser_{\alpha,\lambda}(z)=0.
\]
Expansion gives \eqref{eq:colored-matching-general}. Taking $r=2n+1$ and $\alpha=-2m-2$ gives $p=m+n+1$ and
\[
\binom{-2m-2}{k}=(-1)^k\binom{2m+k+1}{k},
\]
which yields \eqref{eq:colored-matching-mn}.
\end{proof}

The identity is polynomial in $\lambda$, and in fact its cancellation occurs separately for every possible number of edges.

\begin{corollary}[Fixed-edge refinement]\label{cor:fixed-edge-refinement}
Let $r$ be odd, let $0\le j\le\lfloor r/2\rfloor$, and put $p:=(r-1-\alpha)/2$. Then
\begin{equation}
\sum_{k=2j}^r
2^{r-k}
\binom{p}{r-k}
\binom{\alpha}{k}
m_{k,j}
=0,
\label{eq:fixed-edge-recurrence}
\end{equation}
where $m_{k,j}$ is given by \eqref{eq:matching-number}.
\end{corollary}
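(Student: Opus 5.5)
The plan is to obtain \eqref{eq:fixed-edge-recurrence} by extracting coefficients of powers of $\lambda$ from \eqref{eq:colored-matching-general}; no new M\"obius argument is needed. Fix an odd $r$ and $\alpha\in\CC$, and put $p=(r-1-\alpha)/2$. Corollary~\ref{cor:colored-matching} holds for every $\lambda\in\CC$. Since $2^{r-k}\binom{p}{r-k}\binom{\alpha}{k}$ does not depend on $\lambda$, the left-hand side of \eqref{eq:colored-matching-general} is a polynomial in $\lambda$ that vanishes identically on $\CC$. A polynomial over $\CC$ with infinitely many zeros is the zero polynomial, so each of its coefficients vanishes.

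Next, substitute the definition \eqref{eq:Mnlambda}, $M_k(\lambda)=\sum_{j=0}^{\lfloor k/2\rfloor} m_{k,j}\lambda^j$, and interchange the finite sums over $k$ and $j$. This gives
\[
0=\sum_{j=0}^{\lfloor r/2\rfloor}\lambda^j\sum_{k=2j}^{r}2^{r-k}\binom{p}{r-k}\binom{\alpha}{k}m_{k,j},
\]
because a term with $j\ge1$ occurs in $M_k(\lambda)$ only when $2j\le k$. The range of $j$ stops at $\lfloor r/2\rfloor$ because $k\le r$. The coefficient of $\lambda^j$ is therefore exactly the left-hand side of \eqref{eq:fixed-edge-recurrence}, and it vanishes by the first step.

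There is no serious obstacle; the only point to check is the summation range. One adopts the convention $m_{k,j}=0$ for $2j>k$, which is consistent with \eqref{eq:matching-number} being defined only for $j\le\lfloor k/2\rfloor$. With this convention, starting the inner sum at $k=2j$ loses nothing. Alternatively, one could argue structurally: write $\Mser_{\alpha,\lambda}=\sum_j\lambda^j\Mser^{(j)}_\alpha$, note that the covariance \eqref{eq:M-covariance} holds for every $\lambda$ and hence for each $\lambda$-homogeneous component, and then rerun the eigenspace argument for each component separately. The polynomial-identity argument above is shorter, so it is the one to use.
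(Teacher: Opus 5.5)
Your proposal is correct and follows the paper's own argument: the paper also obtains \eqref{eq:fixed-edge-recurrence} by extracting the coefficient of $\lambda^j$ from \eqref{eq:colored-matching-general}, and your check of the summation range fills in a detail the paper leaves implicit. The paper additionally gives a direct parity proof, using the explicit layer $\coeff{\lambda^j}\Mser_{\alpha,\lambda}(z)$ and the substitution $u=z/(1+z)$; this is close in spirit to the componentwise alternative you mention.
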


\begin{proof}
Extracting the coefficient of $\lambda^j$ in \eqref{eq:colored-matching-general} immediately gives \eqref{eq:fixed-edge-recurrence}. There is also a direct parity proof. From \eqref{eq:M-series-explicit},
\begin{equation}
\coeff{\lambda^j}\Mser_{\alpha,\lambda}(z)
=
\frac{\alpha^{\underline{2j}}}{j!2^j}
 z^{2j}(1+z)^{\alpha-2j}.
\label{eq:lambda-layer}
\end{equation}
Set $R:=r-2j$. Since $r$ is odd, $R$ is odd, and the relevant coefficient is a constant multiple of
\[
\coeff{z^R}(1+2z)^p(1+z)^{R-1-2p}.
\]
Under the coefficient change of variable $u=z/(1+z)$, or equivalently by residues, this becomes
\begin{equation}
\coeff{u^R}(1-u^2)^p=0,
\label{eq:fixed-edge-parity}
\end{equation}
because $R$ is odd.
\end{proof}

As in \Cref{sec:general-theory}, one may use both eigenspaces rather than only the fixed vector $(1+2z)^{\rho/2}$. Since
\[
\mathscr S_\rho(1+2z)^s=(1+2z)^{\rho-s},
\]
one obtains, for arbitrary $s\in\CC$,
\begin{equation}
\sum_{k=0}^r
2^{r-k}\binom{\alpha}{k}M_k(\lambda)
\left\{
\binom{s}{r-k}
-(-1)^r\binom{\rho-s}{r-k}
\right\}
=0,
\qquad \rho=r-1-\alpha.
\label{eq:colored-matching-two-eigenspace}
\end{equation}
Thus the matching example reproduces the full recurrence-producing eigenspace mechanism, not merely one isolated identity.

\subsubsection*{A symmetric-moment interpretation}

For real $\lambda\ge0$, let $X$ be a Gaussian random variable with mean $1$ and variance $\lambda$. Its moment generating function is
\[
\mathbb E(e^{tX})=\exp\!\left(t+\frac{\lambda t^2}{2}\right),
\]
so \eqref{eq:M-egf} gives
\begin{equation}
M_n(\lambda)=\mathbb E(X^n),
\qquad
\Mser_{\alpha,\lambda}(z)=\mathbb E(1+zX)^\alpha,
\label{eq:Gaussian-moments}
\end{equation}
with the latter interpreted coefficientwise as a formal power series. Since $X$ is symmetric about $1$, one has $X\overset{d}=2-X$, and therefore
\[
\begin{aligned}
\Mser_{\alpha,\lambda}\!\left(-\frac{z}{1+2z}\right)
&=
(1+2z)^{-\alpha}
\mathbb E\bigl(1+z(2-X)\bigr)^\alpha\\
&=
(1+2z)^{-\alpha}\Mser_{\alpha,\lambda}(z),
\end{aligned}
\]
which gives a second, non-Appell explanation of \eqref{eq:M-covariance}.

The Gaussian distribution is not essential. The covariance is a formal consequence of reflection symmetry of moments.

\begin{proposition}[Symmetric-moment duality]\label{prop:symmetric-moments}
Let $\beta\in\CC$, and let $\Lambda:\CC[x]\to\CC$ be a linear functional satisfying
\begin{equation}
\Lambda(f(x))=\Lambda(f(2\beta-x))
\qquad(f\in\CC[x]).
\label{eq:symmetric-functional}
\end{equation}
Put $\mu_n:=\Lambda(x^n)$ and define
\begin{equation}
\mathcal U_{\alpha,\beta}(z)
:=
\sum_{n\ge0}\binom{\alpha}{n}\mu_nz^n.
\label{eq:symmetric-moment-series}
\end{equation}
Then
\begin{equation}
\mathcal U_{\alpha,\beta}\!\left(-\frac{z}{1+2\beta z}\right)
=
(1+2\beta z)^{-\alpha}\mathcal U_{\alpha,\beta}(z),
\label{eq:symmetric-moment-covariance}
\end{equation}
and consequently
\begin{equation}
\coeff{z^r}h(z)\mathcal U_{\alpha,\beta}(z)
=
\coeff{z^r}
h\!\left(\frac{z}{1-2\beta z}\right)
(1-2\beta z)^{r-1-\alpha}
\mathcal U_{\alpha,\beta}(-z).
\label{eq:symmetric-moment-duality}
\end{equation}
\end{proposition}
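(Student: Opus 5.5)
The plan has two parts: first the covariance \eqref{eq:symmetric-moment-covariance} for nonnegative integer $\alpha$, then extension to all $\alpha$ by polynomiality, and finally reduction of the coefficient duality to Theorem~\ref{thm:structural-duality} by rescaling, as in Proposition~\ref{prop:colored-matching-duality}.

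\textbf{Integer weights.} Take $\alpha=N\in\mathbb{Z}_{\ge0}$. Then $\mathcal U_{N,\beta}(z)=\Lambda\bigl((1+zx)^N\bigr)$, where $\Lambda$ is applied coefficientwise in $z$ to the polynomial $(1+zx)^N\in\CC[x][z]$. Substituting $z\mapsto -z/(1+2\beta z)$ and multiplying by $(1+2\beta z)^N$ gives
\[
\Lambda\bigl((1+2\beta z-zx)^N\bigr)=\Lambda\bigl((1+z(2\beta-x))^N\bigr).
\]
This is a polynomial identity in $z$ with coefficients in $\CC[x]$. By the symmetry \eqref{eq:symmetric-functional}, applied to each $z$-coefficient, the right side equals $\Lambda\bigl((1+zx)^N\bigr)=\mathcal U_{N,\beta}(z)$. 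This proves \eqref{eq:symmetric-moment-covariance} for $\alpha=N$.

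\textbf{General $\alpha$.} Fix the coefficient of $z^r$ on both sides of \eqref{eq:symmetric-moment-covariance}. On the left, it is a finite combination of $\binom{\alpha}{n}\mu_n$ times coefficients of $(-z/(1+2\beta z))^n$, with $n\le r$. On the right, it involves $\binom{-\alpha}{i}$ and $\binom{\alpha}{n}$. Both sides are therefore polynomials in $\alpha$. They agree for all $\alpha\in\mathbb{Z}_{\ge0}$, so they agree identically.

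\textbf{Coefficient duality.} If $\beta=0$, the covariance is already \eqref{eq:Fcov} with $z$ replaced by $-z$. Instead of handling this separately, I would derive the duality directly from Lagrange inversion in the form \eqref{eq:Lagrange-form}, adapted to the pair $w=z/(1-2\beta z)$ and $z=w/(1+2\beta w)$. This is valid for every $\beta$, including $\beta=0$.

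The adapted form reads
\[
\coeff{z^r}H(z)(1-2\beta z)^{-r}=\coeff{z^r}H\!\left(\frac{z}{1+2\beta z}\right)\frac{1}{1+2\beta z}.
\]
Apply it with
\[
H(z)=h\!\left(\frac{z}{1-2\beta z}\right)(1-2\beta z)^{-1-\alpha}\,\mathcal U_{\alpha,\beta}(-z).
\]
The left side is then the right side of \eqref{eq:symmetric-moment-duality}. On the right, substituting $z/(1+2\beta z)$ into $H$ gives
\[
h(z)\,(1+2\beta z)^{1+\alpha}\,\mathcal U_{\alpha,\beta}\!\left(-\frac{z}{1+2\beta z}\right)\cdot\frac{1}{1+2\beta z}.
\]
By the covariance just proved, this collapses to $h(z)\,\mathcal U_{\alpha,\beta}(z)$. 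Its coefficient of $z^r$ is the left side of \eqref{eq:symmetric-moment-duality}.

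\textbf{Main obstacle.} The only delicate step is the exponent bookkeeping in the adapted Lagrange inversion: checking that $z\,w'(z)/w(z)=1/(1-2\beta z)$ produces precisely the weight $(1-2\beta z)^{r-1-\alpha}$. I would verify this against the $\beta=-1/2$ case, which reproduces \eqref{eq:main-duality}.
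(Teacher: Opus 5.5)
\textbf{Gap: the displayed Lagrange formula has the wrong exponent.} The error sits in the step you flagged as delicate, and as written the formula is false. For $H=1$, $r=1$ its left side is $\coeff{z^1}(1-2\beta z)^{-1}=2\beta$, while its right side is $\coeff{z^1}(1+2\beta z)^{-1}=-2\beta$.

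The correct version comes from $\coeff{z^r}H(z)\phi(z)^r=\coeff{z^r}H(g(z))\,zg'(z)/g(z)$. Here $\phi(w)=1-2\beta w$, and $g(z)=z/(1+2\beta z)$ is the compositional inverse of $w/\phi(w)$. This gives
\[
\coeff{z^r}H(z)(1-2\beta z)^{r}=\coeff{z^r}H\!\left(\frac{z}{1+2\beta z}\right)\frac{1}{1+2\beta z},
\]
which at $\beta=-1/2$ is exactly \eqref{eq:Lagrange-form}.

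With your exponent $-r$, the left side for your choice of $H$ would carry the factor $(1-2\beta z)^{-r-1-\alpha}$. So your sentence claiming that it equals the right side of \eqref{eq:symmetric-moment-duality} is true only for the corrected formula. Once the sign is fixed, the rest of your computation goes through verbatim, including the collapse to $h(z)\,\mathcal U_{\alpha,\beta}(z)$ via the covariance.

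Separately, your aside that for $\beta=0$ the covariance ``is already \eqref{eq:Fcov} with $z$ replaced by $-z$'' is not correct. At $\beta=0$ the covariance only says that $\mathcal U_{\alpha,0}$ is even. You do not use this aside, so it does no harm.

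\textbf{Comparison with the paper's route.} With the sign corrected, your proof is valid and close to the paper's.

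For the covariance, the paper extends $\Lambda$ coefficientwise to $\CC[x][[z]]$ and manipulates $(1+zx)^\alpha$ directly for arbitrary $\alpha$. You instead prove a polynomial identity for $\alpha=N\in\mathbb Z_{\ge0}$ and extend it using polynomiality in $\alpha$ of each $z^r$-coefficient. This is the same device the paper uses for Theorem~\ref{thm:appell-duality}. Your route needs only polynomial algebra. The paper's one-line computation tacitly uses $(1+a)^\alpha(1+b)^\alpha=\bigl((1+a)(1+b)\bigr)^\alpha$ in $\CC[x][[z]]$, together with the compatibility of the coefficientwise $\Lambda$ with substitution and with multiplication by scalar series.

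For the duality, the paper only points to ``the same Lagrange-inversion argument''. In Proposition~\ref{prop:colored-matching-duality} that argument is a rescaling, which has no analogue at $\beta=0$. Your direct inversion for the involution $z\mapsto -z/(1+2\beta z)$ spells the argument out and covers every $\beta$ uniformly.
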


\begin{proof}
Extend $\Lambda$ coefficientwise to formal power series in $z$. Then
\[
\begin{aligned}
\mathcal U_{\alpha,\beta}\!\left(-\frac{z}{1+2\beta z}\right)
&=
\Lambda\!\left(1-\frac{zx}{1+2\beta z}\right)^\alpha\\
&=
(1+2\beta z)^{-\alpha}
\Lambda\bigl(1+z(2\beta-x)\bigr)^\alpha\\
&=
(1+2\beta z)^{-\alpha}\Lambda(1+zx)^\alpha,
\end{aligned}
\]
which proves \eqref{eq:symmetric-moment-covariance}. The coefficient duality follows from the same Lagrange-inversion argument as \Cref{prop:colored-matching-duality}.
\end{proof}

For $\beta=1$ and $\Lambda(f)=\mathbb E f(X)$ with $X\sim N(1,\lambda)$, Proposition~\ref{prop:symmetric-moments} recovers the colored-matching series. The affine-center formulation will also encompass the weighted return-walk family in the next subsection.

We finally note a terminological distinction. Matching-polynomial ``duality'' in the sense of Godsil and Lass concerns relations between a graph and its complement, including Fourier-transform identities \cite{Godsil1981Duality,Lass2004}. The coefficient duality \eqref{eq:M-duality} is of a different kind: it acts on the binomial series of the matching enumerators through a M\"obius substitution and an arbitrary test series $h$.

\subsection{Generalized central trinomial coefficients and weighted return walks}\label{sec:return-walks}

A second enumerative family arises from generalized central trinomial coefficients. For parameters $b,c\in\CC$, define
\begin{equation}
T_n(b,c)
:=
\coeff{x^0}\bigl(b+x+cx^{-1}\bigr)^n
=
\sum_{j=0}^{\lfloor n/2\rfloor}
\binom{n}{2j}\binom{2j}{j}b^{n-2j}c^j.
\label{eq:Tbc-definition}
\end{equation}
Equivalently, $T_n(b,c)$ is the coefficient of $x^n$ in $(x^2+bx+c)^n$. These numbers and their arithmetic and analytic properties have been studied extensively; see, for example, \cite{Sun2014,Sun2022,LiangWangWang2023}. Recent work of Kohen \cite{Kohen2025} studies sequences represented as coefficients of $x^0$ in powers of symmetric Laurent trinomials (more generally, with an additional Laurent-polynomial factor) and reduces their modular behavior to generalized central trinomial coefficients. For nonnegative integral $b$ and $c$, the coefficient-of-$x^0$ form above has a direct walk interpretation: it counts length-$n$ walks on $\mathbb Z$ returning to the origin, with steps $+1,0,-1$, with $b$ colors available for every level step and $c$ colors available for every down-step. Thus $T_n(1,1)$ is the central trinomial coefficient, counting grand Motzkin returns, while
\begin{equation}
T_n(2,1)=\binom{2n}{n},
\qquad
T_n(3,2)=D_n,
\label{eq:T-specializations}
\end{equation}
where $D_n$ is the central Delannoy number \cite{Sun2014,Sun2022}.

Define the corresponding binomial series
\begin{equation}
\Cser_{\alpha;b,c}(z)
:=
\sum_{n\ge0}\binom{\alpha}{n}T_n(b,c)z^n.
\label{eq:Cseries}
\end{equation}
Unlike the colored-matching family, the parameter $b$ now changes the M\"obius involution itself.

\begin{proposition}[Weighted return-walk duality]\label{prop:return-walk-duality}
For all $\alpha,b,c\in\CC$,
\begin{equation}
\Cser_{\alpha;b,c}\!\left(-\frac{z}{1+2bz}\right)
=
(1+2bz)^{-\alpha}\Cser_{\alpha;b,c}(z).
\label{eq:C-covariance}
\end{equation}
Consequently, for every formal power series $h(z)$ and every $r\ge0$,
\begin{equation}
\coeff{z^r}h(z)\Cser_{\alpha;b,c}(z)
=
\coeff{z^r}
h\!\left(\frac{z}{1-2bz}\right)
(1-2bz)^{r-1-\alpha}
\Cser_{\alpha;b,c}(-z).
\label{eq:C-duality}
\end{equation}
\end{proposition}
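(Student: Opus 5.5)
The plan is to reduce \eqref{eq:C-covariance} to the symmetric-moment duality of \Cref{prop:symmetric-moments} with center $\beta=b$; the coefficient duality \eqref{eq:C-duality} is then its second conclusion \eqref{eq:symmetric-moment-duality}.

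\textbf{Step 1: the moment functional.} Define $\Lambda:\CC[x]\to\CC$ on a polynomial $f$ by
\[
\Lambda(f):=\coeff{y^0}f\bigl(b+y+cy^{-1}\bigr),
\]
the constant term of a Laurent polynomial in $y$. By \eqref{eq:Tbc-definition}, its moments are $\Lambda(x^n)=T_n(b,c)$. Hence $\mathcal U_{\alpha,b}=\Cser_{\alpha;b,c}$ in the notation of \eqref{eq:symmetric-moment-series}.

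\textbf{Step 2: the reflection symmetry.} It remains to check \eqref{eq:symmetric-functional} with $\beta=b$, that is, $\Lambda(f(x))=\Lambda(f(2b-x))$. The right-hand side is $\coeff{y^0}f\bigl(b-y-cy^{-1}\bigr)$. Substitute $y\mapsto -y$ in the Laurent polynomial. This substitution preserves constant terms and turns $b-y-cy^{-1}$ into $b+y+cy^{-1}$. So the two sides agree, and this is exactly the Laurent reflection named in Table~\ref{tab:realizations}.

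As a cross-check, one can verify the identity on monomials. Expanding $(b+y+cy^{-1})^n$, the constant term involves only even total powers of the centered variable $y+cy^{-1}$. Therefore $x-b$ has vanishing odd moments, which is equivalent to the required symmetry by linearity.

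\textbf{Step 3: conclusion.} \Cref{prop:symmetric-moments} now gives \eqref{eq:C-covariance} directly. For \eqref{eq:C-duality}, its Lagrange-inversion consequence applies without change, because $z\mapsto z/(1-2bz)$ is the compositional inverse of $z/(1+2bz)$.

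If one prefers to argue independently of that proposition, the route is to rescale. When $b\ne0$, substitute $w=-2bz$ and note that
\[
\frac{w}{1-w}=-\frac{2bz}{1+2bz}.
\]
This turns \eqref{eq:C-covariance} into the standard covariance \eqref{eq:Fcov} for $F(w):=\Cser_{\alpha;b,c}(-w/(2b))$. Then apply Theorem~\ref{thm:structural-duality} and undo the scaling. The case $b=0$ is trivial since the involution is just $z\mapsto -z$, and the odd $T_n(0,c)$ vanish.

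\textbf{Main obstacle.} The only real point is making the coefficientwise extension rigorous for complex $\alpha$, namely that $\Lambda(1+zx)^\alpha$ makes sense and that $\Lambda$ commutes with the substitution. I would handle this as the earlier proof did: extend $\Lambda$ coefficientwise in $z$. Each $z$-coefficient of both sides is a polynomial in $\alpha$, so it also suffices to check nonnegative integers $\alpha$, where everything is a finite polynomial identity.
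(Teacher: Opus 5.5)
Your proposal is correct and is essentially the paper's proof. The paper uses the same Laurent reflection $P(-x)=2b-P(x)$ and the invariance of the constant term under $x\mapsto -x$, carries out the symmetric-moment computation inline, and notes that this is the $\beta=b$ case of Proposition~\ref{prop:symmetric-moments}, with the coefficient duality then following by Lagrange inversion (your rescaling fallback to Theorem~\ref{thm:structural-duality} is also valid).
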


\begin{proof}
Put
\[
P(x):=b+x+cx^{-1}.
\]
Then
\begin{equation}
P(-x)=b-x-cx^{-1}=2b-P(x).
\label{eq:P-reflection}
\end{equation}
Since constant term is invariant under $x\mapsto-x$, one has, coefficientwise in $z$,
\[
\begin{aligned}
\Cser_{\alpha;b,c}\!\left(-\frac{z}{1+2bz}\right)
&=
\coeff{x^0}\left(1-\frac{zP(x)}{1+2bz}\right)^\alpha\\
&=
(1+2bz)^{-\alpha}
\coeff{x^0}\bigl(1+z(2b-P(x))\bigr)^\alpha\\
&=
(1+2bz)^{-\alpha}
\coeff{x^0}(1+zP(-x))^\alpha\\
&=
(1+2bz)^{-\alpha}\Cser_{\alpha;b,c}(z),
\end{aligned}
\]
which proves \eqref{eq:C-covariance}. Formula \eqref{eq:C-duality} is the corresponding coefficient-duality identity obtained by Lagrange inversion, equivalently the specialization $\beta=b$ of Proposition~\ref{prop:symmetric-moments} applied to the constant-term functional generated by $P(x)$.
\end{proof}

The involution
\begin{equation}
\tau_b(z):=-\frac{z}{1+2bz}
\label{eq:tau-b}
\end{equation}
satisfies $\tau_b^2=1$. It is diagonalized by
\begin{equation}
u:=\frac{z}{1+bz},
\qquad
\frac{\tau_b(z)}{1+b\tau_b(z)}=-\frac{z}{1+bz}.
\label{eq:C-parity-coordinate}
\end{equation}
Thus the horizontal-step weight $b$ has a direct structural meaning: it determines the affine center of the moment symmetry and hence the M\"obius involution.

The binomial series also has a hypergeometric form. From \eqref{eq:Tbc-definition},
\begin{align}
\Cser_{\alpha;b,c}(z)
&=
(1+bz)^\alpha
\sum_{j\ge0}
\frac{\alpha^{\underline{2j}}}{(j!)^2}
\left(\frac{cz^2}{(1+bz)^2}\right)^j
\label{eq:C-explicit}\\
&=
(1+bz)^\alpha\,
{}_2F_1\!\left(
-\frac{\alpha}{2},\frac{1-\alpha}{2};1;
\frac{4cz^2}{(1+bz)^2}
\right).
\nonumber
\end{align}
Under $\tau_b$, the coordinate $z/(1+bz)$ changes sign, so the invariance of the ${}_2F_1$ argument gives another immediate verification of \eqref{eq:C-covariance}. This contrasts with the colored-matching expansion \eqref{eq:M-series-explicit}, which involves ${}_2F_0$.

As before, put $\rho:=r-1-\alpha$ and define
\begin{equation}
(\mathscr S_{\rho,b}h)(z)
:=
(1+2bz)^\rho
h\!\left(-\frac{z}{1+2bz}\right).
\label{eq:Sb-operator}
\end{equation}
Then $\mathscr S_{\rho,b}^2=1$, and \eqref{eq:C-duality} becomes
\begin{equation}
\coeff{z^r}h(z)\Cser_{\alpha;b,c}(z)
=
(-1)^r
\coeff{z^r}(\mathscr S_{\rho,b}h)(z)\Cser_{\alpha;b,c}(z).
\label{eq:C-eigenspace-duality}
\end{equation}
In particular, the fixed vector $(1+2bz)^{\rho/2}$ yields the following recurrence.

\begin{corollary}[Generalized central trinomial recurrence]\label{cor:trinomial-recurrence}
Let $r$ be odd and $\alpha,b,c\in\CC$. Then
\begin{equation}
\sum_{k=0}^r
(2b)^{r-k}
\binom{(r-1-\alpha)/2}{r-k}
\binom{\alpha}{k}
T_k(b,c)
=0.
\label{eq:trinomial-general}
\end{equation}
In particular, for $m,n\ge0$,
\begin{equation}
\sum_{k=[n-m]_+}^{2n+1}
(-1)^k(2b)^{2n+1-k}
\binom{m+n+1}{m-n+k}
\binom{2m+k+1}{k}
T_k(b,c)
=0.
\label{eq:trinomial-mn}
\end{equation}
\end{corollary}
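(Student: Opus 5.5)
The plan is to copy the proof of Corollary~\ref{cor:colored-matching}, replacing the colored-matching involution with $\tau_b$ and the operator $\mathscr S_\rho$ with $\mathscr S_{\rho,b}$.

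\textbf{Step 1: a fixed vector.} Put $\rho:=r-1-\alpha$ and $p:=\rho/2$, and take $h(z)=(1+2bz)^p$. From \eqref{eq:tau-b},
\[
1+2b\,\tau_b(z)=\frac{1}{1+2bz}.
\]
Therefore
\[
(\mathscr S_{\rho,b}h)(z)=(1+2bz)^{\rho}(1+2bz)^{-p}=(1+2bz)^p=h(z),
\]
so $h$ is a fixed vector of $\mathscr S_{\rho,b}$. More generally, $\mathscr S_{\rho,b}(1+2bz)^s=(1+2bz)^{\rho-s}$.

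\textbf{Step 2: the vanishing.} Substituting this $h$ into \eqref{eq:C-eigenspace-duality} gives
\[
\coeff{z^r}h\,\Cser_{\alpha;b,c}=(-1)^r\coeff{z^r}h\,\Cser_{\alpha;b,c}.
\]
Since $r$ is odd, this forces
\[
\coeff{z^r}(1+2bz)^p\,\Cser_{\alpha;b,c}(z)=0.
\]
This step is valid for every $b\in\CC$. When $b=0$ the statement degenerates, but it still holds: only $k=r$ survives, and $T_r(0,c)=0$ for odd $r$.

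\textbf{Step 3: expanding the coefficient.} We have $\coeff{z^{r-k}}(1+2bz)^p=(2b)^{r-k}\binom{p}{r-k}$, and \eqref{eq:Cseries} supplies $\binom{\alpha}{k}T_k(b,c)$ as the coefficient of $z^k$ in $\Cser_{\alpha;b,c}$. The Cauchy product of these two coefficient sequences is exactly \eqref{eq:trinomial-general}.

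\textbf{Step 4: the $(m,n)$ specialization.} Take $r=2n+1$ and $\alpha=-2m-2$. Then $p=m+n+1$. Apply \eqref{eq:neg-binomial},
\[
\binom{-2m-2}{k}=(-1)^k\binom{2m+k+1}{k},
\]
together with the symmetry
\[
\binom{m+n+1}{2n+1-k}=\binom{m+n+1}{m-n+k}.
\]
Terms with $k<n-m$ vanish because the binomial $\binom{m+n+1}{m-n+k}$ has a negative lower index. This gives the lower limit $[n-m]_+$ and yields \eqref{eq:trinomial-mn}.

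The only point needing care is the binomial-index bookkeeping in Step 4. Unlike in \eqref{eq:CHa}, the top term $k=2n+1$ does not vanish here, so it must be kept in the sum, which matches the stated upper limit $2n+1$.
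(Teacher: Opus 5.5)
Your proof is correct and follows the paper's own argument. The paper likewise takes the fixed vector $h(z)=(1+2bz)^p$ in \eqref{eq:C-eigenspace-duality}, concludes that the $z^r$ coefficient vanishes for odd $r$, expands the product, and specializes to $r=2n+1$, $\alpha=-2m-2$; your explicit fixed-vector computation, the $b=0$ check, and the bookkeeping at the summation limits are accurate details that the paper leaves implicit.
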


\begin{proof}
Take $h(z)=(1+2bz)^p$ in \eqref{eq:C-eigenspace-duality}, where $p:=(r-1-\alpha)/2$. For odd $r$, the resulting coefficient vanishes. Expanding gives \eqref{eq:trinomial-general}; the specialization $r=2n+1$, $\alpha=-2m-2$ gives \eqref{eq:trinomial-mn}.
\end{proof}

Three classical specializations are worth recording. If $T_k:=T_k(1,1)$ denotes the central trinomial coefficient, then
\begin{equation}
\sum_{k=[n-m]_+}^{2n+1}
(-1)^k2^{2n+1-k}
\binom{m+n+1}{m-n+k}
\binom{2m+k+1}{k}T_k=0.
\label{eq:central-trinomial-recurrence}
\end{equation}
Using $T_k(2,1)=\binom{2k}{k}$ gives
\begin{equation}
\sum_{k=[n-m]_+}^{2n+1}
(-1)^k4^{2n+1-k}
\binom{m+n+1}{m-n+k}
\binom{2m+k+1}{k}\binom{2k}{k}=0,
\label{eq:central-binomial-recurrence}
\end{equation}
and $T_k(3,2)=D_k$ gives
\begin{equation}
\sum_{k=[n-m]_+}^{2n+1}
(-1)^k6^{2n+1-k}
\binom{m+n+1}{m-n+k}
\binom{2m+k+1}{k}D_k=0.
\label{eq:delannoy-recurrence}
\end{equation}
The literature on $T_n(b,c)$ contains many three-term recurrences, congruences, and transform identities \cite{Sun2014,LiangWangWang2023}; we are not aware of the binomial-kernel family \eqref{eq:trinomial-general}--\eqref{eq:trinomial-mn} or of the arbitrary-test-series transform \eqref{eq:C-duality} in that literature.

The polynomial dependence on $c$ again yields a layerwise refinement. The coefficient of $c^j$ in $T_k(b,c)$ is
\begin{equation}
\binom{k}{2j}\binom{2j}{j}b^{k-2j},
\label{eq:return-layer}
\end{equation}
which counts weighted return walks with exactly $j$ up-steps and $j$ down-steps. Extracting $c^j$ from \eqref{eq:trinomial-general} gives, after removing the factors independent of $k$,
\begin{equation}
\sum_{k=2j}^r
2^{r-k}
\binom{(r-1-\alpha)/2}{r-k}
\binom{\alpha}{k}
\binom{k}{2j}
=0,
\qquad r\ \text{odd}.
\label{eq:universal-layer-recurrence}
\end{equation}
Up to a factor depending only on $j$, exactly the same identity underlies the fixed-edge matching refinement \eqref{eq:fixed-edge-recurrence}. Indeed, matchings with $j$ edges contribute $\binom{k}{2j}(2j-1)!!$, while return walks with $j$ up-down pairs contribute $\binom{k}{2j}\binom{2j}{j}$. The two enumerative families therefore share a universal layerwise cancellation even though their global generating functions and combinatorial constructions are different.

There is also a moment interpretation. For $c>0$, the constant-term functional may be represented by averaging
\begin{equation}
X:=b+2\sqrt c\cos\theta,
\qquad 0\le\theta<2\pi,
\label{eq:arcsine-moment}
\end{equation}
so that $T_n(b,c)=\mathbb E(X^n)$ for the shifted arcsine law. The symmetry $X\overset d=2b-X$ is precisely \eqref{eq:P-reflection}, and Proposition~\ref{prop:symmetric-moments} recovers \eqref{eq:C-covariance} with center $\beta=b$. Thus the two main enumerative examples correspond to distinct symmetric moment laws: a Gaussian law for colored matchings and an arcsine law for weighted return walks.

\begin{remark}[Motzkin numbers]
The same affine-center principle also applies to ordinary Motzkin numbers. Since
\[
M_n=\sum_{j=0}^{\lfloor n/2\rfloor}\binom{n}{2j}C_j,
\]
where $C_j$ is the $j$th Catalan number, the Motzkin sequence is the moment sequence of a distribution symmetric about $1$ whose centered even moments are Catalan numbers. It therefore satisfies the center-$1$ covariance of Proposition~\ref{prop:symmetric-moments}. We do not develop this third example here, since the generalized central trinomial family is structurally richer: its parameter $b$ varies the M\"obius involution itself. See \cite{Sun2022} for the close relation between Motzkin and central trinomial numbers.
\end{remark}

\section{Hilbert series and algebraic-combinatorial realizations}\label{sec:hilbert}

For integral weights, the covariance spaces in Theorem~\ref{thm:structural-duality} coincide with the graded spaces $\mathcal F_s$ studied by Herbig, Herden, and Seaton, after the identification $s=-\alpha$ \cite{HerbigHerdenSeaton2015,HerbigHerdenSeaton2021}. Their work connects the same M\"obius functional equation with Hilbert-series reciprocity. We record the resulting coefficient-duality consequence.

\subsection{Gorenstein Hilbert series}

The general covariance principle also applies naturally to Hilbert series. Let $A$ be a positively graded Cohen--Macaulay domain of Krull dimension $d$, with Hilbert series $\operatorname{Hilb}_A(t)$ and $a$-invariant $a$. Stanley's Gorenstein functional equation \cite{HerbigHerdenSeaton2021} can be written
\begin{equation}
\operatorname{Hilb}_A(t^{-1})=(-1)^d t^{-a}\operatorname{Hilb}_A(t).
\label{eq:Stanley-Hilbert}
\end{equation}
Herbig, Herden, and Seaton show that, after setting
\begin{equation}
\varphi_A(z):=z^d\operatorname{Hilb}_A(1-z),
\qquad s:=-(a+d),
\label{eq:Hilbert-phi}
\end{equation}
this is equivalent to $\varphi_A\in\mathcal F_s$, i.e. to the covariance law \eqref{eq:Fcov} with $\alpha=-s$ \cite{HerbigHerdenSeaton2021}. Since both the $a$-invariant $a$ and the dimension $d$ are integers, this realization supplies the integral weight $\alpha=a+d$; this is a genuine restriction of the Hilbert-series application, not of the general formal theory.

\begin{corollary}[Coefficient duality for Gorenstein Hilbert series]\label{cor:Hilbert-duality}
If $A$ is as above and is Gorenstein, then for every formal power series $h$ and every $n\ge0$,
\begin{equation}
\coeff{z^n}h(z)\varphi_A(z)
=
\coeff{z^n}h\!\left(\frac{z}{1+z}\right)
(1+z)^{n-1+s}\varphi_A(-z).
\label{eq:Hilbert-duality}
\end{equation}
\end{corollary}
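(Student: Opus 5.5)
The plan is to reduce the statement to Theorem~\ref{thm:structural-duality}, applied with weight $\alpha=-s=a+d$. Everything then comes down to checking that $\varphi_A$ is M\"obius-covariant of that weight. Once this is done, condition~(i) of the theorem holds. The implication (i)$\Rightarrow$(ii) then gives \eqref{eq:main-duality} for every $h$ and every $r=n\ge0$, with exponent $n-1-\alpha=n-1+s$. That exponent is exactly the one appearing in \eqref{eq:Hilbert-duality}.

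The substantive step is the covariance
\[
\varphi_A\!\left(-\frac{z}{1-z}\right)=(1-z)^{s}\varphi_A(z).
\]
This is the Herbig--Herden--Seaton equivalence quoted just before the corollary, so I would simply invoke it. To make the argument self-contained, I would also verify it directly from Stanley's equation \eqref{eq:Stanley-Hilbert}.

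\begin{itemize}
\item Substituting $z\mapsto\sigma(z)$ turns $1-z$ into $1-\sigma(z)=(1-z)^{-1}$.
\item Hence $\varphi_A(\sigma(z))=\sigma(z)^d\,\operatorname{Hilb}_A\bigl((1-z)^{-1}\bigr)$.
\item Apply \eqref{eq:Stanley-Hilbert} with $t=1-z$. This gives $\operatorname{Hilb}_A((1-z)^{-1})=(-1)^d(1-z)^{-a}\operatorname{Hilb}_A(1-z)$.
\item Since $\sigma(z)^d=(-1)^dz^d(1-z)^{-d}$, the two signs $(-1)^d$ cancel.
\item The result is $(1-z)^{-a-d}z^d\operatorname{Hilb}_A(1-z)=(1-z)^{s}\varphi_A(z)$, which is \eqref{eq:Fcov} with $\alpha=-s$.
\end{itemize}

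The only delicate point is formal rather than computational. $\operatorname{Hilb}_A(t)$ is a rational function with a pole of order $d$ at $t=1$, so $\operatorname{Hilb}_A(1-z)$ is a Laurent series with a pole of order $d$ at $z=0$. I need $\varphi_A=z^d\operatorname{Hilb}_A(1-z)$ to lie in $\CC[[z]]$, as Theorem~\ref{thm:structural-duality} requires. This holds because the pole order at $t=1$ equals the Krull dimension $d$; equivalently, write $\operatorname{Hilb}_A=Q(t)/(1-t)^d$ after Hilbert--Serre.

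I also need \eqref{eq:Stanley-Hilbert}, which is an identity of rational functions, to survive the substitution $t=1-z$ and the expansion at $z=0$. Both sides are rational functions of $z$ that are regular at $0$ after multiplying by $z^d$. So the identity transfers to formal power series by expanding both sides, and no convergence issue arises. With this in place, Theorem~\ref{thm:structural-duality} finishes the proof.
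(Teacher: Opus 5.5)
Your proposal is correct and follows the paper's proof. That proof applies Theorem~\ref{thm:structural-duality} with $F=\varphi_A$ and $\alpha=-s$, taking the covariance of $\varphi_A$ from the cited Herbig--Herden--Seaton equivalence, and your direct derivation from \eqref{eq:Stanley-Hilbert} via $1-\sigma(z)=(1-z)^{-1}$ just makes that step explicit. One small imprecision: for non-standard gradings, Hilbert--Serre gives the denominator $\prod_i(1-t^{d_i})$ rather than $(1-t)^d$, but your argument only needs the pole of $\operatorname{Hilb}_A$ at $t=1$ to have order $d$, so $\varphi_A\in\CC[[z]]$ still holds.
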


\begin{proof}
Apply Theorem~\ref{thm:structural-duality} with $F=\varphi_A$ and $\alpha=-s$.
\end{proof}

For $h=1$, equation \eqref{eq:Hilbert-duality} is another coefficient form of the linear constraints on the Laurent coefficients studied in \cite{HerbigHerdenSeaton2021}. Allowing arbitrary $h$ packages those constraints into the arbitrary-test-function form of the general duality. This connection suggests that choices of $h$ adapted to a particular Hilbert series may yield compact Gorenstein obstructions or identities beyond the Bernoulli setting.

\begin{corollary}[Simplicial spheres and Dehn--Sommerville]\label{cor:Dehn-Sommerville}
Let $\Delta$ be a $(d-1)$-dimensional homology sphere with $h$-polynomial
\[
h_\Delta(t)=\sum_{j=0}^d h_jt^j,
\]
and put $\Phi_\Delta(z):=h_\Delta(1-z)$. Then $\Phi_\Delta$ is M\"obius-covariant of weight $d$, and for every nonnegative integer $r$ and every formal power series $g$,
\begin{equation}
\coeff{z^r}g(z)h_\Delta(1-z)
=
\coeff{z^r}
g\!\left(\frac{z}{1+z}\right)
(1+z)^{r-1-d}h_\Delta(1+z).
\label{eq:sphere-duality}
\end{equation}
\end{corollary}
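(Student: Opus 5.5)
The plan is to prove the covariance of $\Phi_\Delta$ directly from the Dehn--Sommerville relations, that is, from palindromicity of the $h$-polynomial. The coefficient duality \eqref{eq:sphere-duality} should then follow from Theorem~\ref{thm:structural-duality}, (i)$\Rightarrow$(ii). The Hilbert-series route through Corollary~\ref{cor:Hilbert-duality} is not needed. Its only role is to explain why the weight is $d$: for the Stanley--Reisner ring, $\operatorname{Hilb}(t)=h_\Delta(t)/(1-t)^d$, so $\varphi_A(z)=h_\Delta(1-z)$ and the Gorenstein $a$-invariant gives $a+d=d$.

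\textbf{Step 1 (input).} For a $(d-1)$-dimensional homology sphere, the Dehn--Sommerville relations state that $h_j=h_{d-j}$. Equivalently, $t^d h_\Delta(1/t)=h_\Delta(t)$. This is classical, and I would cite it (Klee, Stanley). It can also be read off from Gorensteinness of the Stanley--Reisner ring together with \eqref{eq:Stanley-Hilbert}.

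\textbf{Step 2 (covariance).} Put $\Phi_\Delta(z)=h_\Delta(1-z)$. Since $1-\sigma(z)=1/(1-z)$ by \eqref{eq:sigma-properties},
\[
\Phi_\Delta(\sigma(z))=h_\Delta\!\left(\frac1{1-z}\right)=(1-z)^{-d}\,(1-z)^d h_\Delta\!\left(\frac1{1-z}\right)=(1-z)^{-d}h_\Delta(1-z),
\]
where the last equality is palindromicity with $t=1-z$. This is exactly \eqref{eq:Fcov} with $\alpha=d$. The identity is between polynomials in $1/(1-z)$ times powers of $(1-z)$, so it holds in $\CC[[z]]$.

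\textbf{Step 3 (duality).} Apply Theorem~\ref{thm:structural-duality} with $F=\Phi_\Delta$, $\alpha=d$, and test series $h=g$. We have $F(-z)=h_\Delta(1+z)$ and exponent $r-1-d$, which is precisely \eqref{eq:sphere-duality}.

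The only substantive step is Step 1, which is a citation rather than a computation. I would double-check the weight convention against \eqref{eq:Hilbert-phi} so that $s=-d$ matches the exponent $r-1-d$. After Step 1, everything else is mechanical.
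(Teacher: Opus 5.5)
Your proposal is correct and follows the paper's proof: both cite the Dehn--Sommerville palindromicity $h_\Delta(t)=t^d h_\Delta(t^{-1})$, substitute via $1-\sigma(z)=1/(1-z)$ to get $\Phi_\Delta(\sigma(z))=(1-z)^{-d}\Phi_\Delta(z)$, and then apply Theorem~\ref{thm:structural-duality} with $\alpha=d$. Your side check that the $a$-invariant is $a=0$, so that $s=-d$, agrees with this weight, and as you say the Hilbert-series route is not needed.
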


\begin{proof}
The face ring of $\Delta$ is Gorenstein*, and the Dehn--Sommerville symmetry is
\[
h_\Delta(t)=t^d h_\Delta(t^{-1});
\]
see, for example, \cite{Masuda2005}. Substituting $t=1-z$ gives
\[
\Phi_\Delta\!\left(-\frac{z}{1-z}\right)
=(1-z)^{-d}\Phi_\Delta(z),
\]
so \eqref{eq:sphere-duality} follows from Theorem~\ref{thm:structural-duality}.
\end{proof}

For $g=1$, \eqref{eq:sphere-duality} is equivalent to the usual palindromicity $h_j=h_{d-j}$; the arbitrary-$g$ form packages its coefficient consequences in the same way as \eqref{eq:main-duality}. Thus the Gorenstein specialization includes a familiar object of algebraic combinatorics without requiring any Bernoulli input.

\section{Further directions}
\label{sec:further-directions}

\subsection{Relation with formal groups}

The coordinate
\[
u(z):=\frac{z}{2-z}
\]
does more than diagonalize the involution. If one transports the additive formal group through $u$, defining
\begin{equation}
x\oplus y
:=
u^{-1}(u(x)+u(y)),
\label{eq:formalgroup}
\end{equation}
then
\begin{equation}
\sigma(z)=u^{-1}(-u(z))
\label{eq:formal-inverse}
\end{equation}
is precisely the formal inverse for this group law.

This observation suggests that the M\"obius duality may admit a formulation in the language of one-dimensional commutative formal groups. Satoh's extension of Zagier's proof of Kaneko's recurrence to $q$-Bernoulli numbers attached to formal groups provides evidence that such a direction may be fruitful \cite{Satoh2000}. More broadly, universal Bernoulli polynomials associated with the Lazard formal group and their congruences have been developed by Tempesta and others; see \cite{Tempesta2015}. The problem below should therefore be understood as asking for a coefficient-duality structure inside an existing formal-group Bernoulli theory, rather than for a first formal-group generalization of Bernoulli numbers.

\begin{problem}
Let $F$ be a one-dimensional commutative formal group with formal logarithm $\ell_F$ and inverse
\[
\iota_F(z):=\ell_F^{-1}(-\ell_F(z)).
\]
Construct natural Bernoulli-type series $\Bser_{\alpha,F}(z)$ and a multiplicative cocycle $J_F(z)^\alpha$ satisfying a covariance law of the form
\begin{equation}
\Bser_{\alpha,F}(\iota_F(z))
=
J_F(z)^{-\alpha}
\Bser_{\alpha,F}(z),
\label{eq:formal-group-problem}
\end{equation}
and determine the corresponding coefficient duality.
\end{problem}

The M\"obius covariance studied here corresponds to a particularly simple rational formal-group coordinate.

\subsection{q-analogues}

Satoh's $q$-Bernoulli numbers attached to a formal group generalize Kaneko's recurrence by abstracting Zagier's involution \cite{Satoh2000}. It would be interesting to determine whether the general coefficient duality of Theorem~\ref{thm:structural-duality}, rather than merely its Kaneko specialization, has a $q$-analogue.

The involution formulation suggests that the correct first question is not to search directly for a $q$-analogue of the coefficient formula, but for a $q$-deformation of the underlying covariance law. Once such a covariance law is found, a suitable $q$-Lagrange inversion formula may provide the corresponding duality.

\subsection{Multisection and lacunary recurrences}

The general weighted duality \eqref{eq:weighted-duality} gives a one-parameter family of identities. The special value $c=2$ is distinguished because the transformed quadratic becomes $1-z^2$, producing the parity collapse used in \Cref{sec:bernoulli}. Root-of-unity averaging gives analogous filters for higher residue classes. We record the cubic case as a proof of concept.

\begin{proposition}[Cubic multisection]\label{prop:cubic-multisection}
Let $F$ be M\"obius-covariant of weight $\alpha$, define $f_k:=\coeff{z^k}F(z)$, let $r$ be a nonnegative integer, and put
\[
p:=\frac{r-1-\alpha}{2}.
\]
For $s\in\{0,1,2\}$ one has
\begin{equation}
\sum_{\substack{0\le k\le r\\ k\equiv s\ ({\rm mod}\ 3)}}
(-1)^k\binom{p}{r-k}f_k
=
\coeff{z^r}F(z)
\sum_{\substack{\ell\ge0\\ \ell\equiv r-s\ ({\rm mod}\ 3)}}
\binom{p}{\ell}z^\ell(1-z)^{2p-\ell}.
\label{eq:cubic-multisection}
\end{equation}
Only terms with $\ell\le r$ contribute to the coefficient on the right.
\end{proposition}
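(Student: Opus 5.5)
The plan is to rewrite the residue-class sum on the left of \eqref{eq:cubic-multisection} as a single coefficient extraction against $F(-z)$, and then apply the coefficient duality of Theorem~\ref{thm:structural-duality} with a suitably chosen test series. No root-of-unity averaging is needed, because the filter can be put directly into the test series.

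\textbf{Step 1: encode the filter in a test series.} Put $\ell:=r-k$. Since $\coeff{z^{k}}F(-z)=(-1)^kf_k$, we have
\[
\sum_{k}(-1)^k\binom{p}{r-k}f_k=\coeff{z^r}(1+z)^pF(-z).
\]
The condition $k\equiv s\pmod 3$ is the same as $\ell\equiv r-s\pmod 3$. So the left side of \eqref{eq:cubic-multisection} equals $\coeff{z^r}H_s(z)F(-z)$, where
\[
H_s(z):=\sum_{\ell\equiv r-s\ ({\rm mod}\ 3)}\binom{p}{\ell}z^\ell
\]
is the corresponding $3$-section of $(1+z)^p$. The range $0\le k\le r$ is automatic, because only $\ell\le r$ contributes.

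\textbf{Step 2: invert the M\"obius transform of the test series.} By Theorem~\ref{thm:structural-duality}(ii), for every $h$,
\[
\coeff{z^r}h(z)F(z)=\coeff{z^r}h\!\left(\frac{z}{1+z}\right)(1+z)^{2p}F(-z),
\]
using $r-1-\alpha=2p$. I need $h$ with $h(w/(1+w))(1+w)^{2p}=H_s(w)$. Substituting $w=z/(1-z)$, so that $1+w=(1-z)^{-1}$ and $w/(1+w)=z$, gives
\[
h(z)=(1-z)^{2p}H_s\!\left(\frac{z}{1-z}\right)=\sum_{\ell\equiv r-s}\binom{p}{\ell}z^\ell(1-z)^{2p-\ell}.
\]
This is exactly the series on the right of \eqref{eq:cubic-multisection}.

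\textbf{Step 3: check well-definedness and truncation.} The sum defining $h$ is a legitimate formal power series, since the $\ell$-th term has order $\ge \ell$. For the same reason, terms with $\ell>r$ do not affect $\coeff{z^r}$, which gives the final sentence of the statement.

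\textbf{Main obstacle.} The only delicate point is justifying the substitution $w=z/(1-z)$ in Step 2 for an infinite series with the non-integral exponent $2p$. I would handle it by noting that $z\mapsto z/(1-z)$ and $z\mapsto z/(1+z)$ are mutually inverse formal substitutions. Then $(1+w)^{2p}=(1-z)^{-2p}$ holds as an identity of formal binomial series, and the identity $h(w/(1+w))(1+w)^{2p}=H_s(w)$ can be checked termwise in $\ell$. The same argument works verbatim for any modulus, which is consistent with the surrounding discussion of higher multisections.
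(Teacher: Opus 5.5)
Your proof is correct, but it is organized differently from the paper's. The paper does not apply the main duality directly. Instead it applies the weighted duality of Proposition~\ref{prop:weighted-duality} with $c=1,\omega,\omega^2$ (where $\omega=e^{2\pi i/3}$), multiplies the $j$th identity by $\omega^{js}$, and averages. On the left, the factor $\omega^{-jk}$ coming from $F(-z/\omega^j)$ filters $k\equiv s$. On the right, the expansion $\bigl((1-z)+\omega^jz\bigr)^p=\sum_{\ell}\binom{p}{\ell}\omega^{j\ell}z^\ell(1-z)^{p-\ell}$ filters $\ell\equiv r-s$. You instead build the filter into a single test series and invoke Theorem~\ref{thm:structural-duality}(ii) once.

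The two proofs are linearly equivalent. The weighted duality is the main duality with transformed test series $(1+cw)^p$. So the paper's weighted average is the main duality with transformed test series $\frac13\sum_{j}\omega^{j(s-r)}(1+\omega^jw)^p=H_s(w)$, whose pullback under $w=z/(1-z)$ is exactly your $h$. Your justification of the formal substitution is adequate, and the paper's formal expansion of $\bigl((1-z)+\omega^jz\bigr)^p$ rests on the same kind of fact.

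Your route buys transparency and generality. It needs no roots of unity or dilations, and it shows that the residue-class condition is inessential. For arbitrary coefficients $a_\ell$, the same computation gives $\sum_{0\le\ell\le r}a_\ell(-1)^{r-\ell}f_{r-\ell}=\coeff{z^r}F(z)\sum_{\ell\ge0}a_\ell z^\ell(1-z)^{2p-\ell}$. The statement is the case $a_\ell=\binom{p}{\ell}$ for $\ell\equiv r-s\pmod 3$ and $a_\ell=0$ otherwise. The paper's route, in exchange, presents the identity as a root-of-unity multisection of the one-parameter weighted family. That is the framing it uses to link the parity collapse at $c=2$ to higher residue-class filters.
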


\begin{proof}
Let $\omega:=e^{2\pi i/3}$. Apply \Cref{prop:weighted-duality} with $c=1,\omega,\omega^2$, multiply the identity corresponding to $c=\omega^j$ by $\omega^{js}$, and average over $j=0,1,2$. On the left, the factor $\omega^{-jk}$ coming from $F(-z/\omega^j)$ leaves precisely the terms with $k\equiv s\pmod 3$. On the right, use
\[
1+(\omega^j-2)z-(\omega^j-1)z^2
=(1-z)\bigl((1-z)+\omega^jz\bigr)
\]
and expand formally as
\[
\bigl((1-z)+\omega^jz\bigr)^p
=\sum_{\ell\ge0}\binom{p}{\ell}
\omega^{j\ell}z^\ell(1-z)^{p-\ell}.
\]
The root-of-unity filter
\[
\frac13\sum_{j=0}^2\omega^{j(s-r+\ell)}
=
\begin{cases}
1,&\ell\equiv r-s\pmod3,\\
0,&\text{otherwise},
\end{cases}
\]
gives \eqref{eq:cubic-multisection}.
\end{proof}

For a small explicit instance, suppose $p=6$ (equivalently, $\alpha=r-13$) and choose the residue class $s\equiv r\pmod3$. Then \eqref{eq:cubic-multisection} becomes the genuinely lacunary identity
\begin{equation}
(-1)^r\bigl(f_r-20f_{r-3}+f_{r-6}\bigr)
=
\coeff{z^r}F(z)
\left((1-z)^{12}+20z^3(1-z)^9+z^6(1-z)^6\right),
\label{eq:cubic-example}
\end{equation}
with the convention $f_j=0$ for $j<0$. Thus third-root averaging isolates one residue class of coefficients exactly as $c=2$ isolates parity. Formula \eqref{eq:cubic-multisection} can be specialized to the Bernoulli and enumerative realizations developed above; further simplification depends on the additional sparsity or structure of the chosen coefficient sequence.

There is already a substantial theory of lacunary and shortened Bernoulli recurrences, including the work of Agoh and Dilcher \cite{AgohDilcher2007,AgohDilcher2009} and Agoh's generalized-character recurrences \cite{Agoh2017}. A natural next problem is to determine which of those known recurrences are recovered from higher root-of-unity multisections of \eqref{eq:weighted-duality}, and whether the method produces new residue-class patterns.

\section{Concluding remarks}\label{sec:conclusion}

The coefficient identity that motivated this work first appeared in the special setting of formal Bernoulli series. The structural characterization in Theorem~\ref{thm:structural-duality} reverses that perspective: the primary object is the M\"obius covariance law, and the Bernoulli series is one distinguished realization among many. Covariance, arbitrary-test-function coefficient duality, and parity after the coordinate $z/(2-z)$ are equivalent descriptions of the same formal symmetry.

This viewpoint separates the universal mechanism from the source-specific input. Once a covariant series is available, the eigenspace principle, anti-invariant-coordinate construction, coordinate-convolution duality, and weighted duality produce coefficient identities automatically. The Catalan specialization in Section~\ref{sec:motivating-generalizations} then gives universal convolution arrays, Chebyshev evaluations, and a Ramanujan-type summation formula whose Bernoulli member is the original half-integer power-sum identity. The remaining examples in the paper differ mainly in how the covariance is generated:
\begin{itemize}[leftmargin=2.2em]
\item Bernoulli series arise from reflection of Bernoulli polynomials and yield the recurrences of Chang--Ha, Kaneko, and Momiyama, together with the Chang--Ha Genocchi recurrence \cite[Theorem~3(b)]{ChangHa2001} and a route to Kummer's congruence.
\item Reflection-symmetric Appell sequences produce paired covariance laws, higher-order Bernoulli and Euler realizations, and an affine Euler variant recovering the Euler recurrence of Chang and Ha.
\item Colored matchings arise at an Appell fixed point and admit a Gaussian-moment interpretation; their recurrence family refines by the exact number of selected edges.
\item Generalized central trinomial coefficients arise from a Laurent-polynomial reflection and describe weighted return walks; their specializations include central trinomial, central binomial, and Delannoy numbers, again with a fixed-layer refinement.
\item Gorenstein Hilbert series arise from Stanley-type reciprocity, with simplicial spheres giving the Dehn--Sommerville specialization.
\end{itemize}

The two enumerative examples are especially instructive. Their global generating functions are different---Gaussian/$\,{}_2F_0$ for colored matchings and arcsine/$\,{}_2F_1$ for weighted return walks---yet their fixed-layer recurrences reduce to the same universal binomial cancellation. The symmetric-moment formulation identifies affine reflection as the common explanation.

The literature comparison also clarifies the role of the present results. M\"obius covariance and reciprocal Appell symmetry belong to established theories; what is emphasized here is the coefficient-extraction layer supplied by Lagrange inversion and the systematic use of the resulting duality identities as a recurrence-producing mechanism. The Catalan--M\"obius and Chebyshev transforms, the generalized Ramanujan summation formula, the weighted duality, the enumerative realizations, and the layerwise cancellations suggest further arithmetic and combinatorial applications. Formal groups and $q$-analogues remain natural directions for extending the framework, while the cubic multisection above suggests a systematic study of higher root-of-unity filters and their relation to known lacunary recurrences.

\section*{Acknowledgements}

The author acknowledges the use of OpenAI's ChatGPT as an assistive tool in the preparation of this manuscript, including literature discovery, exploratory derivations, symbolic and computational checks, organization of the exposition, and language and LaTeX editing. All mathematical statements, proofs, citations, assessments of prior work, and final wording were reviewed by the author, who assumes full responsibility for the content of the manuscript.

\end{document}